\theoremstyle{definition}
\newtheorem{dfn}{Definition}
\newtheorem{rem}[dfn]{Remark}
\theoremstyle{plain}
\newtheorem{lem}[dfn]{Lemma}
\newtheorem{thm}[dfn]{Theorem}
\newtheorem{prop}[dfn]{Proposition}
\newtheorem{cor}[dfn]{Corollary}
\newcommand{\C}{{\mathbb C}}
\newcommand{\R}{{\mathbb R}}
\newcommand{\N}{{\mathbb N}}
\newcommand{\1}{{\mathbbm 1}}
\DeclareMathOperator{\diam}{diam}
\DeclareMathOperator{\ord}{ord}
\DeclareMathOperator{\Int}{Int}
\DeclareMathOperator{\id}{id}
\newcommand{\oc}{\widehat{\C}}
\numberwithin{equation}{section}
\def\cS{\mathcal S}
\def\om{\omega}
\def\sg{\sigma}
\def\({\big(} \def\){\big)}
 \def\bu{\bigcup}
\def\es{\emptyset}
\def\du{\bigoplus}
\def\lra{\longrightarrow}
\def\sbt{\subset}  
\def\Ga{\Gamma} \def\a{\alpha} \def\sg{\sigma}
\def\ov{\overline}
\def\sms{\setminus}
\def\P{{\rm P}}
\def\Crit{{\rm Crit}}
\def\PC{{\rm PC}}
\def\PS{{\rm PS}}
\def\h{{\rm h}}
\def\H{{\rm H}}
\def\cC{\mathcal C}
\def\Sing{{\rm Sing}}
\def\dist{{\rm dist}}
\def\lam{\lambda}
\def\al{\alpha}
\def\cL{{\mathcal L}}
\def\fr{\noindent}
\title[Long hitting times]{Long hitting times for expanding systems}
\thanks{We would like to thank Stefano Galatolo and Juan Rivera--Letelier, whose bibliographical remarks allowed us respectively to improve our citations and to weaken the hypotheses of some of our application results. We are also indebted to Roland Zweim\"{u}ller for discussions on the subject and to the reviewers for their helpful comments.}
\keywords{hitting time}
\subjclass{Primary 37B20, Secondary 37A25}
\begin{document}

\author{{\L}ukasz Pawelec}
\address{{\L}ukasz Pawelec, Department of Mathematics and Mathematical Economics, Warsaw School of Economics, 
al.~Niepodleg\l{}o\'{s}ci~162, 02-554 Warszawa, Poland}
\email{LPawel@sgh.waw.pl}

\author{Mariusz Urba\'{n}ski}
\address{Mariusz Urba\'{n}ski, University of North Texas, Department of Mathematics, 1155 Union Circle \#311430, Denton, TX 76203-5017,
USA}
\email{urbanski@unt.edu}

\date{}
\begin{abstract}
We prove a new result in the area of hitting time statistics. Currently, there is a lot of papers showing that the first entry times into cylinders or balls are often faster than the Birkhoff's Ergodic Theorem would suggest. We provide an opposite counterpart to these results by proving that the hitting times into shrinking balls are also often much larger than these theorems would suggest, by showing that for many dynamical systems
$$
\displaystyle \limsup_{r\to 0} \tau_{B(y,r)}(x)\mu(B(y,r))=+\infty,
$$
for an appropriately large, at least of full measure, set of points $y$ and $x$.

We first do this for all transitive open distance expanding maps and Gibbs/equilibrium states of H\"older continuous potentials; in particular for all irreducible subshifts of finite type with a finite alphabet. Then we prove such result for all finitely irreducible subshifts of finite type with a countable alphabet and Gibbs/equilibrium states for H\"older continuous summable potentials. Next, we show that the \emph{limsup} result holds for all graph directed Markov systems (far going natural generalizations of iterated function systems) and projections of aforementioned Gibbs states on their limit sets. By utilizing the first return map techniques, we then prove the \emph{limsup} result for all tame topological Collect--Eckmann multimodal maps of an interval, all tame topological Collect--Eckmann rational functions of the Riemann sphere, and all dynamically semi--regular transcendental meromorphic functions from $\C$ to $\oc$.
\end{abstract}
\maketitle

\section{Introduction}
%\textbf{Schemat pracy:}
%\begin{enumerate}
%	\item Wstep, cytowania
% \item podstawowe definicje, istotne fakty i twierdzenie
% \item dowod
% \item Zastosowania
%\end{enumerate}

In the recent years there has been a growing interest in the topic of quantitative recurrence and hitting (also called \emph{entry}) times. There is currently a number of ways in which we may estimate the \emph{speed} of entry/return times into various sets. Up to our best knowledge, the historically first approach comes from the paper by M. Boshernitzan \cite{Bosh}. It asserts that if $(X,d)$ is a separable metric space and $(T,\mu)$ is any transformation preserving a Borel, probability measure, then 
\begin{equation}\label{wstep}
\liminf_{n\to \infty} \;n^{1/\beta}d(T^n(x),x) < +\infty, \mbox{\;\;$\mu$--a.e.}
\end{equation} 
if $\H_\beta(X)$, the $\beta$--dimensional Hausdorff measure of $X$, is $\sigma$-finite. %In addition, the above lower limit vanishes if $H_\beta(X)=0$.  

Another way of quantifying the recurrence (resp. hitting) speed utilizes the notion of lower and upper recurrence (resp. hitting) rates. The hitting rates of a point $y$ into neighbourhoods of $x$ are defined as follows
\[
\underline{E}(x,y)
:=\liminf_{r\to 0}\frac{\log\tau_{B(x,r)}(x)}{-\log(r)} \mbox{\hskip0.5cm and \hskip0.5cm} 
\overline{E}(x,y)
:=\liminf_{r\to 0}\frac{\log\tau_{B(x,r)}(x)}{-\log(r)},
\]
where 
$$
\tau_U(x):=\inf\{k\geq 1: T^k(x)\in U\},
$$
is the first entry time of point $x$ into $U$. The recurrence rates are defined on the diagonal, i.e $\underline{R}(x)=\underline{E}(x,x)$ and $\overline{R}(x)=\overline{E}(x,x)$.

For many systems exhibiting some kind of hyperbolic behaviour we have 
\[
\underline{R}(x)=\underline{d}_\mu(x)
:=\liminf_{r\to 0}\frac{\log\mu(B(x,r))}{\log(r)}  \mbox{\quad and \quad} \overline{R}(x)=\overline{d}_\mu(x)
:=\limsup_{r\to 0}\frac{\log\mu(B(x,r))}{\log(r)},
\]
for $\mu$--a.e. $x$.
The respective quantities ${\underline{d}}_\mu(x)$ and ${\overline{d}}_\mu(x)$ are commonly called the lower and upper pointwise dimension of the invariant measure $\mu$ at the point $x$, for more details see \cite{BS}.

There seem to be relatively less results concerning the hitting times rates, but the two notions are usually related (at least for ergodic systems). For some general relations between the two, see \cite{HLV}

Calculating the recurrence/hitting time rates is thus related to calculating the Hausdorff (or packing) dimension of the ambient space. A more subtle task, which is a reminiscent of finding the value of the (appropriate dimensional) Hausdorff measure of the space, is to study the expression
\[
R_r(x):=\tau_{B(x,r)}(x)\cdot \mu(B(x,r))
\]  
for the return time, and 
\[
E_r(x):=\tau_{B(y,r)}(x)\cdot \mu(B(y,r))
\] 
for the entry time.

If we take a sequence $(r_n)_{n=1}^\infty$ of radii, typically converging to $0$, then $\(R_{r_n}\)_{n=1}^\infty$ may be viewed as a sequence of random variables on the probability space $(X,\mu)$. The entry times $E_{r_n}$, $n\ge 1$, may be also treated as real--valued random variables whose distribution is determined by conditional measures $\mu_{B(y,r_n)}$ on $B(y,r_n)$. 

\medskip

Returning to the beginning: it is easy to see that the aforementioned Boshernitzan's result gives the following implication:
\[ 
\mu(B(x,r))\leq Cr^\beta \  \  \mu-a.e. \  \  \implies \  \ \liminf_{r\to 0} R_r(x) <+\infty \ \  \mu-a.e.
\]
There are several results improving or widening this implication. See for example \cite{Ju} for results on generic maps on a manifold, or \cite{BGI}, where the the limit in \eqref{wstep} is proved to be always finite for some $\beta\in(0,+\infty)$. Also, the first author of the current paper proved both a strengthening of the recurrence  result \eqref{wstep} and an analogous result for the hitting times in \cite{Pa}.

Another way of looking at the hitting times statistics is to consider the  \emph{shrinking target} setup. Indeed, given $(r_n)_{n=1}^\infty$, decreasing sequence of positive real numbers, define 
\[
E\(x,(r_n)_{n=1}^\infty\):=\big\{y\in X : d(T^nx,y)<r_n \mbox{\;\; for infinitely many $n$}\big\}.
\]
The most natural question is then about the value of $\mu\(E\(x,(r_n)_{n=1}^\infty\)\)$. It is fairly obvious that if the sequence $r_n$ converges to zero sufficiently fast, then usually $\mu(E\(x,(r_n)_{n=1}^\infty\)\)=0$. Then the next natural question is about 
the Hausdorff dimension of this set. For example, the authors in \cite{Rams} consider the sets $E\(x,(r_n)_{n=1}^\infty\)$ for some fairly general maps of an interval and provide a closed formula for their Hausdorff dimension depending on the sequence $(r_n)_{n=1}^\infty$.

\medskip

It should be emphasized at this point that most of the results cited above pertain to the lower limit \eqref{wstep}, i.e. show that the recurrence is (may be) significantly faster than the one suggested by the average. Boshernitzan's formulation of the rate of recurrence is however unsuitable for the question of how slow the recurrence can be since replacing the lower limit by  the upper limit in \eqref{wstep} trivially gives $+\infty$ for all non--fixed points $x$ of the map $T$. On the other hand, studying the upper limits of $R_r(x)$ and $E_r(x)$ as $r\to 0$ makes perfect sense and constitutes a very natural problem. 

There is also a pure probabilistic counterpart to such questions. For example, taking a sequence of independent coin flips, one may ask about the longest time it will take for $n$ consecutive \emph{heads} to appear. Asking such questions started in the 1970's by, amongst others, P. Erd\"os in \cite{Erd}. This is still a popular topic in probability theory.

In the current paper we identify a large class of naturally defined dynamical systems for which 
\begin{equation}\label{eq:intres}\limsup_{r\to0}E_r(x)=+\infty,\end{equation} for almost every $x$ and $y$. Since for these systems the lower limit is equal to 0, the deviation from the expected value is as large as possible in both directions.

There are only a few results in the literature pertaining to long hitting times. As mentioned above, the recurrence rates (defined above) are often equal to the pointwise dimension, giving a \emph{logarythmic} upper and lower bound on the recurrence rate. Notably, S.~Galatolo and P. Peterlongo \cite{GP} constructed a very special system on the three dimensional torus for which both the \emph{liminf} and the \emph{limsup} are infinite $\mu$--a.e. In fact, the correct scaling in their example is not the inverse of the measure. However, as it is shown in their paper, such phenomena may appear only for slowly mixing systems. Similar results have been proved for systems with a random component, see \cite{GRS}.

\medskip

Our paper is organized as follows. Dealing all the time with limsup, in the next section, Section~\ref{sec:def}, we prove our main theorem, i.e. that~\eqref{eq:intres} holds, in the case of all open transitive distance expanding maps and Gibbs/equilibrium states of all H\"older continuous potentials; in particular for all irreducible subshifts of finite type with a finite alphabet. But we go beyond this case. In Section~\ref{sec:subshifts} we show this result also holds for symbolic systems. In Section~\ref{GDMS} we prove~\eqref{eq:intres} for all graph directed Markov systems; in particular for all finitely irreducible subshifts of finite type with a countable alphabet. Then, in Section~\ref{FRMT}, we show that the upper limit of hitting times does not change when passing to the first return map. This leads, see Section~\ref{Examples}, to a multitude of examples such as tame topological Collect--Eckmann multimodal maps of an interval, tame topological Collect--Eckmann rational functions of the Riemann sphere $\oc$, and dynamically semi--regular transcendental meromorphic functions from $\C$ to $\oc$.

%\textbf{Tu beda jakis prace o liminf, \emph{shrinking target}? Tak.}

%\begin{dfn}\label{def:Markov}
%A finite cover $\mathcal{R}=\{R_1,\ldots,R_q\}$ of $X$ is
%said  to be a Markov partition of the space $X$ for the mapping $T$ if
%the following conditions are satisfied.
%\begin{itemize}
%\item[(a)] \ $R_i=\overline{\Int R_i}$ \ for all $i=1,2,\ldots,q$.
%\item[(b)] \ $\Int R_i\cap \Int R_j = \emptyset$ \ for all $i\ne j$.
%\item[(c)] \ $\Int R_j\cap T(\Int R_i) \ne \emptyset \  \Longrightarrow \  R_j\subset T(R_i)$ \ for all $i,j=1,2,\ldots,q$.
%\end{itemize}
%\end{dfn}
%The elements of a Markov partition will be called cells in the sequel.

%\color{green} In this paper we are interested in the upper limits of these products and we prove the following result. - to be used later}

\section{Distance Expanding Maps}\label{sec:def}
Let $(X,\rho)$ be a compact metric space and let 
$
T\colon X\to X
$
be an open topologically transitive Lipschitz continuous distance expanding map in the sense of \cite{PUbook}. Recall that being distance expanding means that there exist $\delta>0$ and $\lambda>1$ such that
$$
\rho(T(x),T(y))\ge \lambda \rho(x,y)
$$
for all points $x$ and $y$ in $X$ such that $\rho(x,y)\le \delta$. Taking $\delta>0$ sufficiently small we will also have that for every integer $n\ge 0$ and every $x\in X$, there exists a unique continuous map 
$$
T_x^{-n}\colon B(T^n(x),4\delta)\to X
$$
such that 
$$
T^n\circ T_x^{-n}=\id_{B(T^n(x),4\delta)}
$$
and 
$$
T_x^{-n}(T^n(x))=x.
$$
In addition, by taking $\delta>0$ small enough, we will have that
\begin{equation}\label{120180614}
\rho\big(T_x^{-n}(z),T_x^{-n}(w)\big)
\le \lambda^{-n}\rho(z,w)
\le 8\delta\lambda^{-n}
\end{equation}
for all $z, w\in B(T^n(x),4\delta)$.
The map $T_x^{-n}$ will be referred in the sequel as the unique continuous inverse branch of $T^n$ defined on $B(T^n(x),4\delta)$ and sending $T^n(x)$ to $x$.

\medskip Let $\mathcal R$ be a Markov partition for $T$ (see \cite{PUbook}) with
\begin{equation}\label{220180614}
\diam(\mathcal R)<\delta.
\end{equation}
For every integer $n\ge 1$ let 
$$
\mathcal R^n:=\mathcal R\vee T^{-1}(\mathcal R) \vee\ldots \vee T^{-(n-1)}(\mathcal R).
$$
The elements of the cover $\mathcal R^n$ will be called in the sequel the cells of order $n$ generated by the partition $\mathcal R$. Also, any union $U$ of elements of $\mathcal R^n$ which cannot be represented as a union of elements of $\mathcal R^{n-1}$ will be referred to as a set of order $n$ generated by the partition $\mathcal R$. We will then write that 
\[
n=\ord(U).
\]
If we do not want/need to specify the order of $U$, we will just say that the set $U$ is generated by the Markov partition $\mathcal R$. Because of \eqref{120180614} and \eqref{220180614}, we have that
\begin{equation}\label{320180614}
\diam({\mathcal R}^n)<\delta\lambda^{-(n-1)}.
\end{equation}
It is known (see \cite{PUbook}) that for every H\"older continuous function $f\colon X\to\R$, following tradition, called a potential in the sequel, there exists a unique equilibrium measure (state) $\mu_f$ on $X$. Being an equilibrium state means that
$$
\h_{\mu_f}(T)+\int_X f\,d\mu_f=\P(f)
:=\sup\left\{\h_{\mu}(T)+\int_X f\,d\mu\right\},
$$
where the supremum is taken over all Borel probability $T$--invariant (ergodic) measures $\mu$ on $X$ and $\h_{\mu}(T)$ is the Kolmogorov--Sinai metric entropy of $T$ with respect to $\mu$. The measure $\mu_f$ is also called, for reasons explained e.g. in \cite{PUbook}, a Gibbs state for $T$ and $f$. We should also note that the quantity $\P(f)$, called the topological pressure of $f$ with respect to $T$, has %(see again \cite{PUbook}), 
a purely topological characterization with no measures involved. It is also known from \cite{PUbook} that there exist two constants $\alpha>0$ and $C\ge 1$, depending on $T$ and $f$ such that
\begin{equation}\label{420180614}
\mu_f(B(z,r))\le Cr^\alpha
\end{equation}
for all $z\in X$ and all radii $r>0$. We now shall prove the following technical but very useful result.

\begin{lem}\label{l1mp3}
Assume that $X$ is a compact subset of some (finitely dimensional) Euclidean space and that $T\colon X\to X$ is an open topologically transitive distance expanding map. Assume also that $\mu$ is a Gibbs/equilibrium state for a H\"older continuous potential. If $y$ is an arbitrary point of $X$, then there exists a Lebesgue measurable set $\Delta\subset (0,1)$ with the following properties
\begin{enumerate}[(a)]
	
\item $\hfill \displaystyle \lim_{\Delta\ni r\to 0}\frac{Leb(\Delta\cap(0,r))}{r}=1. \hfill$\vskip0.5em

\item 
For every $r\in\Delta$ there exists a set $R_r$, generated by the Markov partition $\mathcal R$, satisfying \vskip0.5em
\begin{enumerate}[(b1)]
\item $\hfill \displaystyle B(y,r) \subset R_r, \hfill$ \vskip0.5em
\item $\hfill \displaystyle \frac{\mu(R_r)}{\mu(B(y,r))}\le 2,\hfill$ \vskip0.5em
\item $\hfill \displaystyle \lim_{r\to 0} \ord(R_r)\mu(B(y,r))=0. \hfill$
\end{enumerate}
\end{enumerate}
\end{lem}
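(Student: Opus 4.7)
The plan is to define $R_r$ as the union of all cells of a suitably fine iterate of the Markov partition that meet $B(y,r)$, and then to show that those $r$ for which this covering has mass at most $2\mu(B(y,r))$ form a Lebesgue-density-one subset of $(0,1)$. The quantitative idea is: if we inflate $B(y,r)$ only to $B(y, r+r^{K+1})$ with $K$ large, the Fubini integral of the added mass is extremely small, which forces the bad radii to be sparse.

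Fix once and for all an integer $K$ so large that $K+\alpha > \gamma$, where $\gamma$ is a uniform exponent for which $\mu(B(y,r)) \ge c\, r^\gamma$ holds for every $y \in X$ and every sufficiently small $r$. For Gibbs/equilibrium states of H\"older potentials on topologically transitive open distance expanding systems such a lower bound is standard: each ball $B(y,r)$ contains a Markov cell of order $O(\log(1/r))$, and the Gibbs inequality combined with the usual Bowen-type estimate on Birkhoff sums of the potential yields the required power, with constants independent of $y$. For each $r \in (0,1)$ let $n(r)$ be the smallest positive integer with $\delta\lambda^{-(n(r)-1)} \le r^{K+1}$, so that by \eqref{320180614} every element of $\mathcal R^{n(r)}$ has diameter at most $r^{K+1}$; set
\[
R_r := \bu\bigl\{A \in \mathcal R^{n(r)} : A \cap B(y,r) \ne \es\bigr\}.
\]
Then (b1) and the ambient containment $R_r \subset B(y, r+r^{K+1})$ are immediate, $\ord(R_r) \le n(r) = O(\log(1/r))$, and combining with \eqref{420180614} yields (b3): $\ord(R_r)\mu(B(y,r)) = O(\log(1/r)\cdot r^\alpha) \to 0$.

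It remains to set $\Delta := \{r \in (0,1) : \mu(B(y,r+r^{K+1})) \le 2\mu(B(y,r))\}$, which forces (b2), and to verify the density statement (a). By Fubini,
\[
\int_0^{r_0}\bigl[\mu(B(y, r+r^{K+1})) - \mu(B(y,r))\bigr]\,dr = \int_X Leb\bigl\{r \in (0,r_0): r<\rho(y,x)\le r+r^{K+1}\bigr\}\,d\mu(x),
\]
and the inner length is bounded by $\rho(y,x)^{K+1}$, so \eqref{420180614} gives a bound of order $r_0^{K+1+\alpha}$. On $\Delta^c$ the integrand strictly exceeds $\mu(B(y,r)) \ge c r^\gamma$; decomposing $\Delta^c \cap (0,r_0)$ dyadically therefore yields $Leb\bigl(\Delta^c \cap [r_0 2^{-k-1}, r_0 2^{-k}]\bigr) = O\bigl((r_0 2^{-k})^{K+1+\alpha-\gamma}\bigr)$. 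Summing the resulting geometric series and dividing by $r_0$ produces a density bound of order $r_0^{K+\alpha-\gamma} \to 0$, which gives (a).

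The main obstacle is producing the uniform lower bound $\mu(B(y,r)) \ge c r^\gamma$ with constants independent of $y$: the dyadic summation depends crucially on this, and verifying it for \emph{every} $y \in X$ (not just $\mu$-typical $y$) relies on full support of the Gibbs state together with the Bowen-type distortion estimate on cell measures. Once this input is secured, the Fubini identity, the dyadic decomposition, and the explicit choice of $K$ are routine bookkeeping.
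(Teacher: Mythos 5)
Your construction of $R_r$ agrees with the paper's: both take the union of all cells of $\mathcal R^{n(r)}$ meeting $B(y,r)$, with $n(r)$ chosen so the cells have diameter well below $r$, and both then get (b1) and (b3) immediately from \eqref{320180614} and \eqref{420180614}. The genuine divergence is in how the density-one set $\Delta$ and the ratio bound (b2) are produced. The paper invokes Lemma~3.6 of \cite{PUZI} (applied with $\kappa_y\equiv 2$) as a black box: that lemma hands over a density-one $\Delta$ with $\mu(B(y,r+r^2))/\mu(B(y,r))\le 2$ directly, using the annulus width $r^2$. You instead prove a version of the thin-annuli statement from scratch via the Fubini identity
$$
\int_0^{r_0}\big[\mu(B(y,r+r^{K+1}))-\mu(B(y,r))\big]\,dr
=\int_X \mathrm{Leb}\big\{r\in(0,r_0):\ r<\rho(y,x)\le r+r^{K+1}\big\}\,d\mu(x),
$$
together with a dyadic decomposition of $\Delta^c\cap(0,r_0)$. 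The price of making the argument self-contained is twofold: (i) you need the annulus width $r^{K+1}$ with $K$ chosen large enough that $K+1+\alpha-\gamma>0$, so the geometric series over dyadic scales converges, whereas the paper's cited lemma works already with width $r^2$; and (ii) you must feed in an explicit \emph{uniform lower} bound $\mu(B(y,r))\ge c\,r^{\gamma}$, which is indeed standard for Gibbs states of H\"older potentials on open distance expanding systems (every $B(y,r)$ contains a Markov cell whose order is $O(\log(1/r))$, and the Gibbs property together with $\min f>-\infty$ and $P(f)>\min f$ gives the power bound with constants independent of $y$), but is not explicitly invoked in the paper's proof. Your version actually establishes the stronger full-limit statement $\lim_{r_0\to 0}\mathrm{Leb}(\Delta^c\cap(0,r_0))/r_0=0$, which implies (a). Net assessment: the argument is correct, more elementary and transparent than an appeal to an external lemma, at the cost of a coarser exponent and an extra (but standard) geometric input.
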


\begin{proof}%[Proof of Lemma \ref{l1mp3}]
Because of Lemma~3.6 in \cite{PUZI}, applied with $\kappa_y$ identically equal to $2$, there exists a Lebesgue measurable set 
$\Delta\subset (0,1)$ such that item (a) above holds and 
\begin{equation}\label{520180614aa}
\frac{\mu(B(y,r+r^2))}{\mu(B(y,r))}\le 2
\end{equation}
for all $r\in\Delta$. 
For every $r\in\Delta$ let $n(r)\ge 1$ be the least integer such that 
\begin{equation}\label{120180607aa}
\delta\lambda^{-(n(r)-1)}\le r^2.
\end{equation}
Let 
$$
R_r:=\bigcup\big\{R\in \mathcal R^{n(r)}:R\cap B(y,r)\ne\emptyset\big\}.
$$
Then item (b1) holds trivially and also $R_r$ is a set generated by the Markov partition $\mathcal R$ with 
$$
\ord(R_r)\le n(r).
$$
Invoking \eqref{120180607aa} and \eqref{320180614}, we see that
$$
R_r\subset B(y,r+r^2). 
$$
Because of this and \eqref{520180614aa} we have (b2). It follows from the definition of $n(r)$ that 
$$
\delta\lambda^{-(n(r)-2)}> r^2.
$$
Taking logarithms, yields
$$
n(r)< \frac{\log(\delta\lambda^2)-2\log r}{\log\lambda}.
$$
Therefore, using also \eqref{420180614}, we get that
$$
\begin{aligned}
0\le \limsup_{r\to 0} \ord(R_r)\mu(B(y,r))
&\le \limsup_{r\to 0} n(r)\mu(B(y,r)) \\
&\le \frac{C}{\log\lambda}\limsup_{r\to 0}\Big(r^\alpha\big(\log(\delta\lambda^2)-2\log r\big)\Big)
=0.
\end{aligned}
$$
This means that (b3) holds and the proof is complete.
\end{proof}

The main result of this section, and one of the main theorems of this paper, is the following.

\begin{thm}\label{mainthm}
Assume that $(X,\rho)$ is a compact metric space and $T\colon X\to X$ is an open topologically transitive distance expanding map. If $\mu$ is a Gibbs/equilibrium state for a H\"older continuous potential, then for every $y\in X$ we have that
\begin{equation}\label{eq:restA}
\limsup_{r\to 0} \tau_{B(y,r)}(x)\cdot \mu(B(y,r)) = +\infty, \mbox{\quad for $\mu$--a.e. $x\in X$.}
\end{equation}
\end{thm}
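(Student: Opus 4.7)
The plan is as follows. For each $M \in \N$, define
$$A_M := \bigl\{x \in X : \limsup_{r\to 0}\tau_{B(y,r)}(x)\mu(B(y,r)) > M\bigr\};$$
proving $\mu(A_M)=1$ for all $M$ yields \eqref{eq:restA} after intersecting over $M\in\N$. Whenever $T(x) \ne y$—a $\mu$-full condition, since Gibbs measures for H\"older potentials on topologically transitive distance expanding maps are atomless—one has $\tau_{B(y,r)}(T(x)) = \tau_{B(y,r)}(x) - 1$ for all sufficiently small $r$, so multiplying by $\mu(B(y,r))\to 0$ shows the $\limsup$ is orbit-constant mod $\mu$. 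Hence $A_M$ is $T$-invariant mod $\mu$, and ergodicity of $\mu$ reduces the problem to proving $\mu(A_M) > 0$.

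To establish the latter, pick a decreasing sequence $(r_k)\subset\Delta$ with $r_k\to 0$ and set $R_k := R_{r_k}$, $n_k := \ord(R_k)$. Properties (b1)--(b2) of Lemma~\ref{l1mp3} give
$$\tau_{B(y,r_k)}(x)\mu(B(y,r_k)) \ge \tfrac12\,\tau_{R_k}(x)\mu(R_k),$$
so by the reverse Fatou lemma applied to the indicators of $E_k := \{x : \tau_{R_k}(x)\mu(R_k) > 2M\}$, it suffices to exhibit $c(M) > 0$ with $\mu(E_k) \ge c(M)$ along some subsequence of $k$. I would obtain this from a quasi-independence inequality of the form
$$\mu\bigl(\{\tau_{R_k} > Nn_k\}\bigr) \ge c_1\bigl(1 - Cn_k\mu(R_k)\bigr)^N,$$
proved by splitting $[1,Nn_k]$ into $N$ blocks of length $n_k$, using the Gibbs/bounded-distortion bounds on cylinders of $\mathcal R^{n_k}$ together with topological mixing of $T$ to render avoidance in distinct blocks approximately multiplicative, and bounding the single-block hitting probability by $n_k\mu(R_k)$. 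Taking $N := \lceil 2M/(n_k\mu(R_k))\rceil$ and invoking (b3) of Lemma~\ref{l1mp3}—whence $n_k\mu(R_k)\to 0$ while $Nn_k\mu(R_k)\to 2M$—forces the right-hand side below by a positive constant depending only on $M$.

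The principal technical difficulty is the quasi-independence estimate above. While the single-block Gibbs bound is standard, one must control the multiplicative error accumulated over $N \sim 1/(n_k\mu(R_k))$ successive blocks of length $n_k$, with both quantities tending to infinity. This calls for a uniform simultaneous version of the two-sided Gibbs bound on $\mathcal R^{n_k}$ and of the exponential decay of correlations for indicators of $R_k$. Once the inequality is available, the invariance-ergodicity and reverse-Fatou steps sketched above deliver the conclusion.
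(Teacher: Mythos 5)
Your top-level strategy is genuinely different from the paper's and, at that level, sound. You observe that the $\limsup$ is constant along orbits up to the $\mu$-null set $T^{-1}\{y\}$ (since $\mu$ is atomless and $\mu(B(y,r))\to 0$), so each $A_M$ is $T$-invariant mod~$\mu$; ergodicity then reduces the task to $\mu(A_M)>0$, and reverse Fatou reduces that further to a single-scale lower bound $\mu(\{\tau_{R_k}\mu(R_k)>2M\})\ge c(M)>0$ along some sequence $r_k\in\Delta$, $r_k\to0$. The paper does \emph{not} use ergodicity: it proves directly that $\bigcap_{n=0}^{\Omega}A_{r_n}^c$ has measure $\le\delta$ via a multi-scale decoupling argument using the $\psi$-mixing estimate \eqref{eq:estint} across widely separated scales. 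Interestingly, your route and the paper's both rest on the \emph{same} single-scale estimate: the paper proves $q_r^{\left(\left[2M/\mu(R_r)\right]\right)}\le\Gamma(M):=1-e^{-4MC}<1$, i.e.\ $\mu(\{\tau_{R_r}\mu(R_r)>2M\})\ge e^{-4MC}>0$, which is precisely your $c(M)$. So your ergodicity shortcut would, given that estimate, give a shorter finish than the paper's multi-scale step.

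Where your proposal has a genuine gap is precisely in the sketched derivation of that single-scale bound, and you yourself flag the difficulty. The block-product lower bound $\mu(\tau_{R_k}>Nn_k)\ge c_1(1-Cn_k\mu(R_k))^N$ does not follow from iterating a two-sided quasi-independence inequality over $N$ consecutive length-$n_k$ blocks: the blocks are adjacent, each block event depends on cylinder coordinates extending $n_k$ past the block, so there is no gap, and each use of a two-sided bound costs a fixed multiplicative constant $c$; after $N\to\infty$ steps the accumulated $c^{-N}$ degenerates, and your target $c_1$ uniform in $N$ is not obtained. The paper's derivation of the needed bound is different and avoids this accumulation: it bounds the \emph{complement} via the recurrence $q_r^{(k+1)}\le q_r^{(k)}\bigl(1-C\mu(R_r)\bigr)+C\mu(R_r)+Co(r)\mu(R_r)^2$, which uses only the one-sided Markov-set estimate $\mu(U\cap T^{-k}V)\le C\mu(U)\mu(V)$ (no separation gap, no two-sided bound) and a linear recurrence whose fixed point stays bounded. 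To make your argument complete you would need to replace your block-product heuristic by that iteration (or some equivalent one-sided scheme); once you do, the rest of your outline goes through.
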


\begin{proof}%[Proof of Theorem \ref{mainthm}]
Fix a point $y \in X$. To ease notation, we will denote the ball $B(y,r)$ by $B_r$ and the complement of any set $Z$ in $X$ by $Z^c$. 

Fix $M>0$ and a radius $r>0$ belonging to $\Delta$, the set produced in Lemma~\ref{l1mp3}. Define
\begin{equation}
A_r:=\left\{x\in X : \tau_{B_r}(x) > \frac{M}{\mu(B_r)}\right\}.%,\quad Q_r=\left\{x\in X : \tau_{R_r}(x) \geq \frac{M}{\mu(R_r)}\right\}
\end{equation}
In order to prove our result it suffices to show that for every $M>0$ sufficiently large and for any $\delta>0$ %any $r_0>0$ arbitrarily small 
we may define a decreasing sequence $r_n$, $n=1, 2,\ldots,\Omega$ ($\Omega$ to be chosen later) %decreasing to $0$ 
such that 
\begin{equation}\label{eq:meas0}
	\mu\left(\bigcap_{n=0}^{\Omega}A_{r_n}^c\right)\leq \delta.
\end{equation}
The definition of $A_r$ coupled with the results on the set $R_r$, namely (b1) and (b2) from Lemma~\ref{l1mp3} leads to
\begin{equation}\label{eq:Arcest}
\begin{aligned}
	\bigcap_{n=0}^{\Omega}A_{r_n}^c
&=\left\{x\in X : \forall_{0\leq i \leq \Omega}\;\tau_{B_{r_i}}(x)\leq \frac{M}{\mu({B_{r_i}})}\right\}\subset \left\{x\in X : \forall_{0\leq i \leq \Omega}\;\tau_{R_{r_i}}(x)\leq \frac{M}{\mu({R_{r_i}})}\frac{\mu({R_{r_i}})}{\mu({B_{r_i}})}\right\}\\
	&\subset \left\{x\in X : \forall_{0\leq i \leq \Omega}\;\tau_{R_{r_i}}(x)\leq \frac{2M}{\mu({R_{r_i}})}\right\},
	\end{aligned}
\end{equation}
where the last inequality holds assuming that $M>0$ is sufficiently large so that then $r_1>0$ is sufficiently small. Let us introduce the following notation. Given an integer $k\ge 1$ let
\begin{equation}
\label{eq:defqk}
	\begin{aligned}	
		a_r^{(k)}&:=\mu(\{x\in X : \exists_{1\le l\leq k}\; T^l{x}\in B_r\})=  \mu(\{x\in X : \tau_{B_r}(x)\leq k\}),\\
		q_r^{(k)}&:=\mu(\{x\in X : \exists_{1\le l\leq k}\; T^l{x}\in R_r\})= \mu(\{x\in X : \tau_{R_r}(x)\leq k\}).
	\end{aligned} 
\end{equation}
Trivially, $a_r^{(k)}\leq q^{(k)}_r$; and from the inclusions above 
\[ \mu(A_r^c)=a_r^{\left(\left[\frac{M}{\mu(B_r)}\right]\right)}\leq q_r^{\left(\left[\frac{2M}{\mu(R_r)}\right]\right)}.
\] 
Since the measure $\mu$ is $T$--invariant we get the estimate 
\begin{equation}\label{eq:estqk}
	q_r^{(k)} \leq k\mu(R_r). 
\end{equation}
Additionally, also because the measure $\mu$ is $T$--invariant, we get for all integers $0\le k_1\le k_2$ that
\begin{equation}\label{eq:measpres}
	\mu(\{x\in X : \exists_{k_1 \leq l \leq k_2}\; T^l{x}\in R_r\})= 
	\mu(\{x\in X : \exists_{0 \leq l \leq k_2-k_1}\; T^l{x}\in R_r \}).
\end{equation}
%\medskip
Now, we need a well-known upper bound of the measure of the intersection of two Markov sets. 
 Indeed, there exists a constant $C\in[1,+\infty)$ such that if $U$ and $V$ are   arbitrary sets generated be the Markov partition $\mathcal R$, then for every integer $k\ge \ord(U)$ we have that
\begin{equation}\label{eq:cor1}
	\mu(U\cap T^{-k}(V)) \leq C\mu (U)\mu(V).
\end{equation}
Put
$$
o(r):=\ord(R_r).
$$
Then 
$$
\ord\left(\bigcap_{i=0}^{k-o(r)}T^{-i}(R_r^c)\right)\le k
$$ 
and we have
\begin{equation}\label{eq:cor2}
\begin{aligned}	
\mu\left(T^{-(k+1)}(R_r) \cap \bigcap_{i=0}^{k-o(r)}T^{-i}(R_r^c)\right) 
&\leq C\mu \left(T^{-(k+1)}(R_r)\right)\cdot\mu\left(\bigcap_{i=0}^{k-o(r)}T^{-i}(R_r^c)\right)\\
&=C\mu \left(R_r\right)\cdot\mu\left(\bigcap_{i=0}^{k-o(r)}T^{-i}(R_r^c)\right),
\end{aligned}	
\end{equation}
where the inequality comes from \eqref{eq:cor1} and the equality from $T$ being measure-preserving. 

%\medskip
Using the estimate above we may find a satisfactory estimate on $q_r^{(k)}$. Observe that
\begin{equation}\label{eq:qrkfest}
\begin{aligned}
		q_r^{(k+1)} 
		&= q_r^{(k)} + \mu(x \in X : \tau_{R_r}(x)=k+1)\\
		&= q_r^{(k)} + 	\mu(x \in X : T^{k+1}(x)\in R_r \wedge \forall_{i\leq k} T^{i}(x)\notin R_r)\\
		&\leq q_r^{(k)} + 	\mu(x \in X : T^{k+1}(x)\in R_r \wedge \forall_{i\leq k-o(r)} T^{i}(x)\notin R_r)\\
		&= q_r^{(k)} + \mu\left(T^{-(k+1)}(R_r) \cap \bigcap_{i=0}^{k-o(r)} T^{-i}(R_r^c)\right)\\
		&\leq q_r^{(k)} + C\mu \left(R_r\right)\cdot\mu\left(\bigcap_{i=0}^{k-o(r)}T^{-i}(R_r^c)\right) 
		= q_r^{(k)} + C\mu \left(R_r\right)(1-q_r^{(k-o(r))}).
		%\\	&= q_r^k(1-C\mu(R_r)) + C\mu(R_r).
\end{aligned}
\end{equation}
The first line trivially yields  an estimate $q_r^{(k+1)} \leq q_r^{(k)}+\mu(R_r)$ and using this $o(r)$ times, yields
\begin{equation}
	q_r^{(k)}\leq q_r^{(k-o(r))}+o(r)\mu(R_r).
\end{equation}
Observe that by \eqref{eq:estqk} this inequality also holds if $k\leq o(r)$. We put this back into \eqref{eq:qrkfest}, arriving at
\begin{equation}
q_r^{(k+1)} 
\leq q_r^{(k)} + C\mu \left(R_r\right)(1-q_r^{(k)}+o(r)\mu(R_r)) 
= q_r^{(k)}(1-C\mu(R_r)) + C\mu(R_r)+ Co(r)\mu(R_r)^2.
\end{equation}
We apply this inductively $k$ times, and by observing that $q_r^{(1)}=\mu(R_r)$ we arrive at  
\begin{align*}
q_r^{(k)}
&\leq (1-C\mu(R_r))^{k-1}q_r^{(1)}+C\mu(R_r)\big(1+o(r)\mu(R_r)\big) \sum_{i=0}^{k-2} (1-C\mu(R_r))^i\\
&= (1-C\mu(R_r))^{k-1}\mu(R_r) + C\mu(R_r)(1+o(r)\mu(R_r)) \frac{1-(1-C\mu(R_r))^{k-1}}{C\mu(R_r)}.
\end{align*}
Let us return to the task of estimating $\mu(A_r^c)$. % Using \eqref{eq:defqk} and the comments thereafter we may write
%\begin{equation}\label{eq:estmua1}
%\mu(A_r^c)=a_r^{\left[\frac{M}{\mu(B_r)}\right]}\leq q_r^{\left[\frac{M}{\mu(B_r)}\right]}
%\leq q_r^{\left[\frac{M}{\mu(R_r)}\right]\frac{\mu(R_r)}{\mu(B_r)}},
%\end{equation}
%and recall that \eqref{eq:assmar} states that the second fraction goes to 1 as $r\to 0$. 
Applying the estimate on $q_r^{(k)}$ and Lemma~\ref{l1mp3}, making also a trivial simplification, and using common estimates on $e^x$, we get for all $r\in\Delta$ small enough, that
\begin{align*}
		q_r^{\left(\left[\frac{2M}{\mu(R_r)}\right]\right)}
&\leq (1-C\mu(R_r))^{\frac{2M}{\mu(R_r)}-2}\mu(R_r) + (1+o(r)\mu(R_r))(1-(1-C\mu(R_r))^{\frac{2M}{\mu(R_r)}-2})\\
&\leq e^{-2MC}(1-C\mu(R_r))^{-2}\mu(R_r)+ %\\ 
 %& \  \  \  \  \  \  \  \  \  \  \  \  \  \  \     \
 (1+o(r)\mu(R_r))\left(1-e^{-3MC}(1-C\mu(R_r))^{-2}\right) \\
&\le \Gamma(M):=1-e^{-4MC}
< 1.
\end{align*}
%\textbf{\color{red} I tu pojawia sie problem. Musimy moc powiedziec, ze $o$ jest niezbyt duze. Dokladniej, potrzebujemy moc przyblizyc (z gory) kulke $B_r$ zbiorem komorek $R_r$ rzedu $o$ tak by:}
%{\color{red}
%\begin{enumerate}
%	\item $\frac{\mu(R_r)}{\mu(B_r)}\leq 2$
%	\item $o\cdot C\mu(R_r)$ dowolnie male.
%\end{enumerate}}
%\textbf{\color{red} Uda sie?}
%\medskip
Observe that this estimate also gives
\begin{equation}\label{eq:est-qk}
	k\leq \left[\frac{2M}{\mu(R_r)}\right]\implies q_r^k \leq \Gamma(M).
\end{equation}
%We arrive at 
%\begin{equation}\label{eq:estmua2}
%\mu(A_r^c)\leq \Gamma <1,
%\end{equation}
%that is we found a uniform (over all $r$ sufficiently small, depending on $M$ and $C$) bound of the measure.
%\textbf{Tak naprawde, to raczej potrzebujemy tych szacowan dla $R_r$ i w ogole nie chcemy $A_r$}
%\medskip
We will additionally use a stronger mixing result, namely Theorem 5.4.10. from \cite{PUbook}, which applied to our situation means that
\begin{equation}\label{eq:estint}
%	\begin{aligned}	
		\mu\left(T^{-n}(A) \cap B\right) \leq (1+D\gamma^{n-k})\mu(A)\mu(B),
%		\end{aligned}
\end{equation}
where $D>0$ and $\gamma<1$ are some constants, $A$ is an arbitrary measurable set and $B\in\mathcal{F}_0^k$. Find $s\in\N$ such that 
\begin{equation}
	\Gamma(M)\cdot(1+D\gamma^s)<1.
\end{equation}
Denote $W:=1+D\gamma^s$.
We are finally ready to show that the required measure of the intersections is small, i.e. to prove \eqref{eq:meas0}.

For brevity, denote 
$$
R_i:=R_{r_i}, \  \  k_i:=\frac{2M}{\mu({R_{r_i}})},
\  \  {\rm and} \  \  
\tau_i:=\tau_{R_{r_i}}(x).
$$
Because of Lemma~\ref{l1mp3} there exists a decreasing sequence $(r_i)_{i=1}^\infty$ of positive radii, all belonging to $\Delta$, such that
\begin{align}
\label{eq:asom} \mbox{$\Omega$ may be taken so big that \quad}&(W\Gamma)^{\Omega+1}\leq \frac{\delta}{2},\\
%\label{eq:asep} \mbox{$\varepsilon$ is so small that \quad} &\varepsilon\frac{\Gamma}{1-W\Gamma}\leq \frac{\delta}{2}\\
\label{eq:asrii} \mbox{$r_i$ decrease so fast that \quad}	&k_{i+1}\geq 2(s+k_{i}), \mbox{\quad and }\\
	&\mu(R_{i+1}) \leq \frac{\delta}{2\Omega}%\varepsilon\mu\left(\left\{\tau_i\leq k_i\right\}\right)
	\frac{1}{k_i+s}
	\label{eq:asri}
\end{align}
Using \eqref{eq:Arcest} first, and then dividing the set into $2^\Omega$ subsets depending on the behaviour of $\tau_i$ gives the following.
\begin{equation}\label{watpliwos}
\begin{aligned}
	\mu\Big(\bigcap_{n=0}^{\Omega}A_{r_n}^c\Big) &\leq \mu\left(\left\{\forall_{0\leq i \leq \Omega}\;\tau_i\leq k_i\right\}\right) \\
	&=	\mu\left(\left\{\tau_0\leq k_0 \wedge \forall_{1\leq i \leq \Omega}\;\left(\tau_i\leq k_{i-1}+s \vee \exists_{k_{i-1}+s < u \leq k_i} T^ux\in R_i
	%k_0+s< \tau_i\leq k_i
	\right)\right\}\right)\\
	&\leq\mu\left(\left\{\tau_0\leq k_0 \wedge \forall_{1\leq i \leq \Omega}\exists_{k_{i-1}+s < u \leq k_i} T^ux\in R_i\right\}\right) + \mu\left(\exists_{1\leq i \leq \Omega}\left\{\tau_i <k_{i-1}+s\right\}\right)\\
	&\leq \mu\left(\left\{\tau_0\leq k_0 \wedge \forall_{1\leq i \leq \Omega}\exists_{k_{i-1}+s < u \leq k_i} T^ux\in R_i\right\}\right) + \Omega\max_{1\leq i \leq \Omega}\mu\left(\left\{\tau_i <k_{i-1}+s\right\}\right)
%\\
%&\leq  \min_{1\leq i \leq \Omega}\mu\left(\left\{\tau_i <k_{i-1}+s\right\}\right)+ \mu\left(\left\{\tau_0\leq k_0 \wedge \forall_{1\leq i \leq \Omega}\exists_{k_{i-1}+s < u \leq k_i} T^ux\in R_i\right\}\right).
\end{aligned}
\end{equation}
A series of estimates follows below. For the second summand, use the easy estimate on the entry time \eqref{eq:estqk} and then apply \eqref{eq:asri}. For the first summand, apply the estimate on the intersection \eqref{eq:estint} $\Omega$ times, use the definitions of $\Gamma(M)$ and $W$, then use the estimate \eqref{eq:est-qk} on $q_r^{(k)}$ along with \eqref{eq:measpres}, and finally invoke \eqref{eq:asom}. Using symbols:
\begin{align*}
%\begin{aligned}
	\mu\bigg(&\bigcap_{n=0}^{\Omega}A_{r_n}^c\bigg)\le \\
&\le \Omega \max_{1\leq i \leq \Omega} (k_{i-1}+s)\mu(R_i)+ (1+D\gamma^s)\mu\left(\left\{\tau_0\leq k_0\right\}\right) \mu\left(\left\{\forall_{1\leq i \leq \Omega}\exists_{k_{i-1}+s < u \leq k_i} T^ux\in R_i \right\}\right)\\
	&\leq \frac{\delta}{2} + W \Gamma(M) \mu\left(\left\{\forall_{1\leq i \leq \Omega}\exists_{k_{i-1}+s < u \leq k_i} T^ux\in R_i \right\}\right)\\
	&\leq \frac{\delta}{2} + W^{\Omega+1} \Gamma(M) \prod_{i=1}^{\Omega}\mu\left(\left\{\exists_{k_{i-1}+s < u \leq k_i} T^ux\in R_i \right\}\right)\leq \frac{\delta}{2} + W^{\Omega+1} \Gamma(M) \prod_{i=1}^{\Omega}\mu\left(\left\{\tau_i\leq k_i\right\}\right)\\
	&\leq \frac{\delta}{2} + W^{\Omega+1} \Gamma(M)^{\Omega+1} \leq \delta.
\end{align*}
 This ends the proof.
\end{proof}

\begin{rem}\label{r120181211}
What we have actually proved is that if $T\colon X\to X$ is an open topologically transitive distance expanding map of a compact metric space $(X,\rho)$ and $\mu$ is a Borel probability $T$--invariant measure on $X$ such that \eqref{eq:estint} holds and \eqref{420180614} holds for some point $y\in X$, then \eqref{eq:restA} holds for $\mu$--a.e. $x\in X$ with that point $y$. We also note that compactness of the metric space $X$ was not essential for the proof.
\end{rem}

\section{Thermodynamic Formalism of Subshifts of Finite Type with Countable Alphabet; Preliminaries}\label{sec:subshifts}

In this section we introduce the basic symbolic setting in which we will be working in the sequel. We will describe some fundamental thermodynamic concepts, ideas and results, particularly those used in later sections for applications.

Let $\mathbb{N}=\{1, 2, \ldots \}$  %be  the set of all positive integers 
and let $E$ be a countable  set, either finite or infinite,
called in the sequel an alphabet. Let
$$
\sg\colon E^\mathbb{N} \to E^\mathbb{N}
$$
be the  shift map\index{shift map}. % i.e. cutting off the first coordinate and shifting one place to the left. 
It is given by the
formula
$$
\sg\( (\om_n)^\infty_{n=1}  \) =  \( (\om_{n+1})^\infty_{n=1}  \).
$$
We also put
$$
E^*=\bigcup_{n=0}^\infty E^n,
$$
to be the set of finite strings.
\fr For every $\om \in E^*$, we denote by $|\om|$  the unique integer
$n \geq 0$ such that $\om \in E^n$. We  call $|\om|$ the length of
$\om$. We make the convention that $E^0=\{\es\}$. If $\om \in
E^\mathbb{N}$ and $n \geq 1$, we put
$$
\om |_n=\om_1\ldots \om_n\in E^n.
$$
If $\tau \in E^*$ and $\om \in E^* \cup E^\mathbb{N}$, we define the concatenation of $\tau$ and $\omega$ by:  
$$
\tau\om:=
\begin{cases}
\tau_1\dots\tau_{|\tau|}\om_1\om_2\dots\om_{|\om|} \  \  &{\rm if } \ \om \in E^*, \\
\tau_,\dots\tau_{|\tau|}\om_1\om_2\dots \  \  &{\rm if } \ \om \in E^\N,.
\end{cases}
$$
Given $\om,\tau\in E^{\mathbb N}$, we define $\omega\wedge\tau  \in
E^{\mathbb N}\cup E^*$ to be the longest
initial block common to both $\om$ and $\tau$. For each $\alpha >
0$, we define a metric $d_\alpha$ on
$E^{\mathbb N}$ by setting
\begin{equation}\label{d-alpha}
d_\alpha(\om,\tau) ={\rm e}^{-\alpha|\om\wedge\tau|}.
\end{equation}
All these metrics induce the same topology, known to be the product (Tichonov) topology. A real or complex valued function defined on a subset of $E^\N$ is %uniformly continuous with respect to one of these metrics if and only if it is uniformly continuous with respect to all of them. Also, this function is
H\"older with respect to one of these metrics if and only if it is
H\"older with respect to all of them, although, of course, the H\"older exponent depends
on the metric. If no metric is specifically mentioned, we take it to
be $d_1$.

Now consider an arbitrary matrix $A\colon E \times E \to \{0,1\}$. Such a matrix will be called the incidence matrix in the sequel. Set
$$
E^\infty_A
:=\{\om \in E^\mathbb{N}:  \,\, A_{\om_i\om_{i+1}}=1  \,\, \mbox{for
  all}\,\,   i \in \N
\}.
$$
Elements \index{A-admissible matrices@$A$-admissible matrices} of $E^\infty_A$ are called {\it $A$-admissible}. We also set
$$
E^n_A
:=\{\om \in E^\mathbb{N}:  \,\, A_{\om_i\om_{i+1}}=1  \,\, \mbox{for
  all}\,\,  1\leq i \leq
n-1\}, \ n \in \N,
$$
and
$$
E^*_A:=\bigcup_{n=0}^\infty E^n_A.
$$
The elements of these sets are also called {\it $A$-admissible}. For
every  $\om \in E^*_A$, we put
$$
[\om]:=\{\tau \in E^\infty_A:\,\, \tau_{|_{|\om|}}=\om \}.
$$
The set $[\om]$ is called the cylinder generated by the word $\om$. The collection of all such cylinders forms a base for the product topology relative to $E^\infty_A$.
The following fact is obvious.

\begin{prop}\label{p1j83}
The set $E^\infty_A$ is  a closed subset of
$E^\mathbb{N}$, invariant under the shift map $\sg\colon  E^\mathbb{N}\to E^\mathbb{N}$, the latter meaning that
$$
\sg(E^\infty_A)\sbt E^\infty_A.
$$
%The latter means  that $\sg (E^\mathbb{N}_A)\sbt E^\mathbb{N}_A$.
\end{prop}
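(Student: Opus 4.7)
The plan is to verify the two claims of Proposition~\ref{p1j83} directly from the definitions, since both are essentially unpacking of topology and of the shift action. There is no real obstacle here; the only thing to be careful about is identifying the right topology on $E$ (the discrete one) so that admissibility conditions cut out closed sets.

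For closedness, I would express $E^\infty_A$ as a countable intersection of closed sets. For each $i \in \mathbb{N}$, let
\[
F_i := \{\om \in E^{\mathbb{N}} : A_{\om_i \om_{i+1}} = 1\}.
\]
Since $E$ carries the discrete topology and the coordinate projections $\pi_i, \pi_{i+1}\colon E^{\mathbb{N}} \to E$ are continuous, the map $(\pi_i,\pi_{i+1})\colon E^{\mathbb{N}} \to E \times E$ is continuous, and $F_i$ is the preimage of the (clopen) set $\{(a,b)\in E\times E: A_{ab}=1\}$. Hence each $F_i$ is closed (in fact clopen), and
\[
E^\infty_A = \bigcap_{i\in\mathbb{N}} F_i
\]
is closed. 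Alternatively, I could argue by exhibiting the complement as a union of cylinders: if $\om\notin E^\infty_A$, pick $i$ with $A_{\om_i\om_{i+1}}=0$; then the cylinder $[\om|_{i+1}]$ is an open neighborhood of $\om$ disjoint from $E^\infty_A$.

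For shift-invariance, I would simply compute: if $\om\in E^\infty_A$, then for every $i\in\mathbb{N}$,
\[
A_{(\sg\om)_i(\sg\om)_{i+1}} = A_{\om_{i+1}\om_{i+2}} = 1,
\]
by admissibility of $\om$ applied at index $i+1$. Therefore $\sg(\om)\in E^\infty_A$, which gives $\sg(E^\infty_A)\subset E^\infty_A$. Both statements being settled, the proposition follows.
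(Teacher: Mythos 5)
Your proof is correct, and it takes the standard approach; in fact the paper gives no proof at all, merely remarking that ``the following fact is obvious,'' so your argument is a sound and complete fleshing-out of that remark.
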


The matrix $A$ is said to be {\it finitely irreducible} if there
exists a finite set $\Lambda \sbt E_A^*$ such that for all $i,j\in E$
there exists $\om\in \Lambda$ for which $i\om j\in E_A^*$. If all elements of some such $\Lambda$ are of the same length, then $A$ is called finitely primitive (or aperiodic).

\medskip The topological pressure of a continuous function $f:E_A^\infty\to\R$ with respect to the shift map $\sg:E_A^\infty\to E_A^\infty$ is defined to be
\begin{equation}\label{2.1.1}
\P(f)
:=\lim_{n\to\infty}\frac{1}{n}\log \sum_{\om \in E_A^n}\exp \left(\sup_{\tau\in [\om]}\sum_{j=0}^{n-1}f(\sg^j(\tau))\right).
\end{equation}
The existence of this limit, following from the observation that the ``$\log$'' above forms a subadditive sequence, was established in \cite{MU_Israel}, comp. \cite{MU_GDMS}. Following the common usage we abbreviate 
$$
S_nf:=\sum_{j=0}^{n-1}f\circ\sg^j
$$
and call $S_nf(\tau)$ the $n$th Birkhoff's sum of $f$ evaluated at a word $\tau\in E_A^\infty$.

\medskip A function $f\colon E_A^\infty\lra\R$ is called (locally) H\"older continuous with an exponent $\a>0$ if 
$$
V_\a(f):=\sup_{n\ge 1}\left\{V_{\a,n}(f)\right\}<+\infty,
$$
where 
$$
V_{\a,n}(f)=\sup\{|f(\om)-f(\tau)|e^{\a(n-1)}:\om,\tau\in E_A^\infty
\text{ and } |\om\wedge \tau|\ge n\}.
$$
A function $f\colon E_A^\infty\to\R$ is called summable if  
\begin{equation}\label{2.3.1}
\sum_{e\in E}\exp(\sup(f|_{[e]}))<+\infty. 
\end{equation}
%Functions $f$ satisfying this condition are called (see \cite{MU_Israel}, and \cite{MU_GDMS}) in the sequel summable. 
We note that if $f$ has a Gibbs state, then $f$ is summable.
The following theorem has been proved in \cite{MU_Israel}, comp. \cite{MU_GDMS}, for the class of acceptable functions defined there. Since H\"older continuous ones are among them, we have the following.

\begin{thm}[Variational Principle]\label{t2.1.6}
If the incidence matrix $A\colon E\times E\to\{0,1\}$ is 
finitely irreducible and if $f\colon E_A^\infty\to\R$ is H\"older continuous, then
$$
\P(f)=\sup\Big\{\h_{\mu}(\sg)+\int f\,d\mu\Big\},
$$
where the supremum is taken over all $\sg$-invariant (ergodic) Borel
probability measures $\mu$ such that $\int f\,d\mu >-\infty$.
\end{thm}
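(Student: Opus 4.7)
The plan is to establish the equality by proving both inequalities separately. The upper bound $\h_\mu(\sg) + \int f\, d\mu \le \P(f)$ is the soft direction, valid for every $\sg$-invariant Borel probability $\mu$ with $\int f\, d\mu > -\infty$; the reverse inequality requires constructing a measure that realizes (or at least nearly realizes) the pressure.

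For the upper bound, I would work with the countable partition $\mathcal{P} := \{[e] : e\in E\}$ of $E_A^\infty$ into $1$-cylinders. Since $\mathcal{P}$ is a generator under $\sg$, we have $\h_\mu(\sg) = \h_\mu(\sg,\mathcal{P})$. The elementary Jensen-type inequality
$$\sum_\om p_\om(a_\om - \log p_\om) \le \log \sum_\om e^{a_\om},$$
applied with $p_\om = \mu([\om])$ and $a_\om = \sup_{\tau\in[\om]} S_n f(\tau)$ for $\om$ ranging over $E_A^n$, yields
$$H_\mu(\mathcal{P}^n) + \int S_n f\, d\mu \le \log \sum_{\om\in E_A^n} \exp\Big(\sup_{\tau\in[\om]} S_n f(\tau)\Big) + K,$$
where $K$ is a distortion constant arising from the Hölder bound $|S_n f(\tau) - S_n f(\tau')| \le V_\a(f)/(1-e^{-\a})$ on each $n$-cylinder. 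Dividing by $n$, letting $n\to\infty$, and using $\sg$-invariance of $\mu$ produces the upper bound. Finiteness of $H_\mu(\mathcal{P})$, which is needed for the Jensen step to be non-vacuous, follows from summability \eqref{2.3.1} together with $\int f\, d\mu > -\infty$ by comparing $-\mu([e])\log\mu([e])$ against $\mu([e])\sup_{[e]} f + \log\sum_e e^{\sup_{[e]} f}$.

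For the lower bound I would construct an equilibrium state $\mu_f$ via the Ruelle transfer operator
$$\cL_f g(\om) := \sum_{\sg(\tau) = \om} e^{f(\tau)} g(\tau).$$
Under the Hölder and summability hypotheses and finite irreducibility of $A$, standard Perron--Frobenius arguments produce a positive Hölder eigenfunction $h$ of $\cL_f$ and a Borel probability eigenmeasure $\nu$ of $\cL_f^*$, both with common eigenvalue $e^{\P(f)}$. After normalization, $\mu_f := h\nu$ is $\sg$-invariant and satisfies the Gibbs property
$$C^{-1} \le \frac{\mu_f([\om])}{\exp(S_n f(\om) - n\P(f))} \le C$$
uniformly over $\om \in E_A^n$, $n\ge 1$. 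Inserting this estimate into the computation of $\h_{\mu_f}(\sg,\mathcal{P}) = \lim_n \frac{1}{n}H_{\mu_f}(\mathcal{P}^n)$ and invoking Birkhoff's ergodic theorem for $f$ yields $\h_{\mu_f}(\sg) + \int f\, d\mu_f = \P(f)$, so the supremum is actually attained.

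The main obstacle is the countable alphabet. In the classical finite-alphabet setting every step above is immediate (Ruelle, Walters), but for countable $E$ nothing is automatic: the partition function in \eqref{2.1.1} can diverge, the partition $\mathcal{P}$ has infinite entropy in general, and $\cL_f$ is not compact on natural Banach spaces of continuous functions. The summability hypothesis \eqref{2.3.1} and the Hölder regularity of $f$ are precisely what makes each of these difficulties tractable: summability enforces $\P(f) < \infty$ and $H_\mu(\mathcal{P}) < \infty$ whenever $\int f\, d\mu > -\infty$, while Hölder regularity provides the distortion bounds needed to propagate Gibbs-style estimates to all $n$. These technical details are worked out in \cite{MU_Israel} and \cite{MU_GDMS}, whose constructions I would follow in detail.
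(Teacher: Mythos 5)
The theorem as stated does not assume that $f$ is summable, only that it is H\"older continuous and that $A$ is finitely irreducible; indeed, $\P(f)$ is then allowed to equal $+\infty$, in which case the variational principle asserts that the supremum on the right is also $+\infty$. Your argument invokes summability at two essential junctures: in the upper bound, to deduce $H_\mu(\mathcal{P})<\infty$ (which you correctly note is needed both for the Jensen step and, more fundamentally, for the Kolmogorov--Sinai theorem to give $\h_\mu(\sg)=\h_\mu(\sg,\mathcal{P})$ --- a countable generator with infinite entropy need not compute $\h_\mu(\sg)$); and in the lower bound, where the Perron--Frobenius construction of the Gibbs/equilibrium state $\mu_f$ requires $\cL_f\1<\infty$, i.e.\ exactly summability. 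Without summability the lower bound in your form simply does not run: there may be no Gibbs state, and the supremum need not be attained.

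The standard way around this, and what the cited sources \cite{MU_Israel} and \cite{MU_GDMS} do (the present paper only cites them; it supplies no proof of its own), is to exhaust the alphabet by finite subsets $F\subset E$, apply the classical finite-alphabet variational principle to $\sg|_{E_{A,F}^\infty}$ and $f|_{E_{A,F}^\infty}$, and pass to the supremum over $F$, using monotonicity and the definition \eqref{2.1.1} of $\P(f)$. This route delivers the lower bound without ever constructing a Gibbs state and without assuming summability, and it also sidesteps the infinite-entropy pathology in the upper bound, since for measures supported on $E_{A,F}^\infty$ the $1$-cylinder partition is finite. Your sketch is essentially a correct proof of the \emph{existence and characterization of the equilibrium state for summable potentials} (a genuinely stronger conclusion, stated in the paper only under the additional summability hypothesis), but as a proof of the stated variational principle it proves less than is claimed, under more than is given.
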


We call a $\sg$-invariant probability measure $\mu$ on $E_A^\infty$ an  equilibrium state of a H\"older continuous function $f\colon E_A^\infty\to\R$ if $\int -f\,d\mu<+\infty$ and 
\begin{equation}\label{2.2.9}
\h_{\mu}(\sg)+\int \!\! f\, d\mu=\P(f). 
\end{equation}
It was proved in \cite{MU_Israel} %, comp. \cite{MU_GDMS}, 
that if the matrix  $A$ is finitely irredicible and $f\colon E_A^\infty\to\R$ is a H\"older continuous summable potential, then there exists a unique equilibrium state of $f$. We denote it by $\mu_f$. For the reasons explained in  \cite{MU_Israel} and \cite{MU_GDMS} it is also called a Gibbs state for $f$ and, crucially, it satisfies \eqref{eq:estint}. We have the following. 

\begin{thm}\label{t120181213}
Let $E$ be a countable set, either finite or infinite, and let $A\colon E \times E \to \{0,1\}$ be a finitely irreducible incidence matrix. If $f\colon E_A^\infty\to\R$ is a H\"older continuous summable potential, then, given any $\alpha>0$, we have for the dynamical system $\(\sg\colon E^\infty_A\to E^\infty_A,\mu_f\)$ that
\begin{equation}\label{120181213}
\limsup_{r\to 0} \tau_{B_\alpha(\rho,r)}(\om)\cdot \mu_f(B_\alpha(\rho,r)) = +\infty, 
\end{equation}
for every $\rho\in E^\infty_A$ and $\mu_f$--a.e. $\omega\in E^\infty_A$, where $B_\alpha(\gamma,r)$ denotes the ball, with respect to the metric $d_\alpha$ defined in \eqref{d-alpha}, centred at $\gamma\in E^\infty_A$ with radius $r$. 
\end{thm}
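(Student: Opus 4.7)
The plan is to deduce Theorem~\ref{t120181213} from Remark~\ref{r120181211}, whose last sentence explicitly removes the need for compactness. It therefore suffices to verify, in the symbolic setting endowed with $d_\alpha$, that $\sg\colon E^\infty_A\to E^\infty_A$ is open, topologically transitive and distance expanding, that the Gibbs state $\mu_f$ satisfies the exponential mixing estimate \eqref{eq:estint}, and that the pointwise upper bound \eqref{420180614} holds at the chosen centre $\rho$.

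The topological and metric checks are straightforward: the shift is distance expanding with constant $\lambda=e^\alpha$ (since $|\sg\om\wedge\sg\tau|=|\om\wedge\tau|-1$ whenever $\om_1=\tau_1$), it is open on cylinders, and it is topologically transitive because finite irreducibility of $A$ allows any two cylinders to be connected through a word from the fixed witness set $\Lambda$. The mixing estimate \eqref{eq:estint} for $\mu_f$ is the standard exponential decay of correlations in the thermodynamic formalism of H\"older continuous summable potentials over finitely irreducible countable shifts, for which I would cite the spectral results in \cite{MU_GDMS}.

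The decisive simplification is that in the metric $d_\alpha$, balls centred at $\rho$ coincide with cylinders: for $r\in(e^{-\alpha(n+1)},e^{-\alpha n}]$ one has $B_\alpha(\rho,r)=[\rho|_{n+1}]$, which is already an element of the Markov partition of order $n+1$ generated by the one-cylinder partition. Consequently Lemma~\ref{l1mp3} is bypassed entirely: one may take $R_r:=B_\alpha(\rho,r)$, so that the analogues of (b1) and (b2) hold trivially with equality, while the crucial remaining condition reduces to
\[
n\,\mu_f([\rho|_n])\longrightarrow 0 \quad \text{as } n\to\infty.
\]
Using the Gibbs estimate $\mu_f([\rho|_n])\le C\exp\bigl(S_nf(\rho)-n\P(f)\bigr)\le C\exp\bigl(n(\sup f-\P(f))\bigr)$, together with the strict inequality $\sup f<\P(f)$, which holds for H\"older continuous summable potentials on finitely irreducible shifts with at least two admissible symbols via a direct comparison of the partition functions defining $\P(f)$, one obtains exponential decay $\mu_f([\rho|_n])\le C e^{-cn}$. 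This yields \eqref{420180614} at $y=\rho$, and in particular the required control on $n\,\mu_f([\rho|_n])$.

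With all the hypotheses of Remark~\ref{r120181211} verified, the conclusion follows. The hardest part I anticipate is justifying the strict inequality $\sup f<\P(f)$ in full generality; if it fails in some degenerate configuration, one must instead establish $n\,\mu_f([\rho|_n])\to 0$ directly, exploiting the summability of $f$ together with the non-atomicity of any Gibbs state arising from a H\"older summable potential on a finitely irreducible shift. Apart from this technical point, the combinatorial core of the proof of Theorem~\ref{mainthm} is preserved verbatim, since in the symbolic setting the balls are Markov sets from the outset and no density-type argument is needed.
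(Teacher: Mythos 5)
Your proposal mirrors the paper's argument almost step for step: both rely on Remark~\ref{r120181211}, both observe that in the metric $d_\alpha$ every ball centred at $\rho$ is exactly a cylinder $[\rho|_n]$ (so that the Markov set $R_r$ of Lemma~\ref{l1mp3} can be taken to be the ball itself, and (b1), (b2) hold with equality), and both reduce the matter to verifying the pointwise decay estimate \eqref{420180614} in the cylinder form $n\,\mu_f([\rho|_n])\to 0$. So the combinatorial skeleton is the same. The one place where you genuinely diverge from the paper is in how you handle this decay estimate. The paper simply quotes the uniform exponential bound $\mu_f([\tau|_n])\le e^{-\beta n}$ from \cite{MU_GDMS} as a known fact about Gibbs states of summable H\"older potentials on finitely irreducible countable shifts, and then everything follows. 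You, instead, attempt a direct derivation from the Gibbs inequality $\mu_f([\rho|_n])\le C\exp(S_nf(\rho)-n\P(f))$ together with the claim $\sup f<\P(f)$.

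That is the weak link, and you were right to flag it. The justification you sketch --- ``a direct comparison of the partition functions defining $\P(f)$'' --- does not actually go through: submultiplicativity of $Z_n:=\sum_{\om\in E_A^n}\exp(\sup S_nf|_{[\om]})$ yields $\P(f)=\inf_n \tfrac1n\log Z_n\le\log Z_1$, so the fact that $Z_1>e^{\sup f}$ when $|E|\ge 2$ bounds $\P(f)$ from \emph{above}, not from \emph{below}, and gives you nothing towards $\sup f<\P(f)$. The inequality $\sup f<\P(f)$ is plausibly true for finitely irreducible subshifts with more than one point, but establishing it requires a genuinely different argument (e.g.\ via periodic points and the Gibbs two-sided bound, or via the variational principle with a cleverly chosen competing measure); and even so, it is stronger than what is needed, since exponential cylinder decay can hold even if $\sup f$ happens to equal $\P(f)$. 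The cleanest repair is exactly what the paper does: invoke the standard exponential decay of cylinder measures for Gibbs states of H\"older summable potentials on finitely irreducible countable shifts (a consequence of the spectral gap for the transfer operator, proved in \cite{MU_GDMS}), rather than trying to re-derive it from scratch in this degree of generality. Once that is cited, your proof is complete and coincides with the paper's.
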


If $E$ is a finite set, then this theorem is an immediate consequence of Theorem~\ref{mainthm} once one observes that then $\sg\colon E^\infty_A\to E^\infty_A$ is an open topologically transitive distance expanding map on the compact space $E^\infty_A$ with respect to every metric $d_\alpha$, $\alpha>0$. In order to see that this theorem holds in its full generality, i. e. for all countable sets $E$, it suffices to note the following: 

\begin{enumerate}
\,

\item Given $\alpha>0$, for every $r\in(0,1)$ there exists an integer $n_r\asymp-\frac1\alpha\log r\ge 0$ such that 
$$
B_\alpha(\tau,r)=\big[\tau|_{n_r}\big]
$$
for every $\tau\in E^\infty_A$ and 
$$
\lim_{r\to 0}n_r=+\infty.
$$
\item There exists $\beta>0$ (see \cite{MU_GDMS}) such that 
$$
\mu_f\big([\tau|_n]\big)\le e^{-\beta n}
$$
for every $\tau\in E^\infty_A$ and every integer $n\ge 0$.

\,

\item Because of (1), (2), and Remark~\ref{r120181211}, the proof of Theorem~\ref{mainthm} goes through in the current setting. 
\end{enumerate}

\medskip\noindent In fact, as an immediate consequence of Theorem~\ref{t120181213} and items (1) and (2) above, we get the following version of this theorem, independent of any metric $d_\alpha$:

\begin{thm}\label{t220181213}
Let $E$ be a countable set, either finite or infinite, and let $A\colon E \times E \to \{0,1\}$ be a finitely irreducible incidence matrix. If $f\colon E_A^\infty\to\R$ is a H\"older continuous summable potential, then we have for the dynamical system $\(\sg\colon E^\infty_A\to E^\infty_A,\mu_f\)$ that
\begin{equation}\label{120181213new}
\limsup_{n\to\infty}\tau_{[\rho|_n]}(\om)\cdot \mu_f([\rho|_n]) = +\infty, 
\end{equation}
for every $\rho\in E^\infty_A$ and $\mu_f$--a.e. $\omega\in E^\infty_A$.
\end{thm}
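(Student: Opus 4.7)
The plan is to deduce the cylinder statement directly from Theorem~\ref{t120181213} using the dictionary between balls in the metric $d_\alpha$ and cylinders supplied by item (1). The key point is that, for each fixed $\rho\in E^\infty_A$, the family of balls $\{B_\alpha(\rho,r):r\in(0,1)\}$ is, up to re-indexing, nothing other than the descending family of cylinders $\{[\rho|_n]:n\ge n_0\}$ for some $n_0=n_0(\alpha)$, so the limsup over radii and the limsup over cylinder depths carry the same information.

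Concretely, I would fix any $\alpha>0$ (say $\alpha=1$) and use item (1) to obtain an integer-valued function $r\mapsto n_r$, with $n_r\to\infty$ as $r\to 0$, such that $B_\alpha(\rho,r)=[\rho|_{n_r}]$ for every $r\in(0,1)$. Unpacking $d_\alpha(\omega,\tau)=e^{-\alpha|\omega\wedge\tau|}$, the condition $d_\alpha(\omega,\rho)<r$ is equivalent to $|\omega\wedge\rho|>-\alpha^{-1}\log r$, which is exactly the membership condition for the cylinder $[\rho|_{n_r}]$ with $n_r=\lfloor -\alpha^{-1}\log r\rfloor+1$. Since $r\mapsto n_r$ is piecewise constant, non-decreasing, and assumes every sufficiently large integer as a value, the function
$$r\longmapsto \tau_{B_\alpha(\rho,r)}(\omega)\cdot\mu_f(B_\alpha(\rho,r))$$
is piecewise constant in $r$, and its set of values on a small neighbourhood of $0$ coincides with a cofinal tail of the sequence $\bigl(\tau_{[\rho|_n]}(\omega)\cdot\mu_f([\rho|_n])\bigr)_{n\ge 1}$. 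Passing to limsups on both sides therefore yields
$$\limsup_{r\to 0}\tau_{B_\alpha(\rho,r)}(\omega)\cdot\mu_f(B_\alpha(\rho,r))=\limsup_{n\to\infty}\tau_{[\rho|_n]}(\omega)\cdot\mu_f([\rho|_n]).$$

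Theorem~\ref{t120181213} asserts that the left-hand side equals $+\infty$ for every $\rho\in E^\infty_A$ and $\mu_f$-a.e.\ $\omega\in E^\infty_A$, so the same holds for the right-hand side, which is precisely \eqref{120181213new}. I do not anticipate any genuine obstacle here: once Theorem~\ref{t120181213} is available, the present statement is obtained by a clean reindexing that is independent of the choice of $\alpha$. Item (2) is not strictly needed for this deduction; it was invoked earlier to furnish the pointwise Gibbs decay estimate that propagated the proof of Theorem~\ref{mainthm} into the countable-alphabet setting required for Theorem~\ref{t120181213}.
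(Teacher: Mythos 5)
Your proof is correct and matches the paper's approach, which simply states that Theorem~\ref{t220181213} is an immediate consequence of Theorem~\ref{t120181213} and items (1) and (2); your reindexing argument via $B_\alpha(\rho,r)=[\rho|_{n_r}]$ with $n_r=\lfloor-\alpha^{-1}\log r\rfloor+1$ ranging over a cofinal tail of the integers makes the deduction fully explicit. Your observation that only item (1) is strictly needed for the reindexing step is also accurate (item (2) was already consumed in establishing Theorem~\ref{t120181213} itself).
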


\section{Graph Directed Markov Systems}\label{GDMS}

Our goal now is to go beyond (uniformly) expanding maps. It will be accomplished in two major steps. First, in the current section, we will extend the previous results to some class of maps of infinite degree. These will be the maps naturally resulting from graph directed Markov systems with a countable infinite alphabet. The second major step, carried on in the next section, will be to employ the techniques of the first return maps. We will first relate, see Lemma~\ref{lem:insys}, hitting times for a given system and an induced one. Then, as a consequence, we will be able to prove an analog of Theorem~\ref{mainthm} for all systems that allow inducings having structure of graph directed Markov systems. Finally, in Section~\ref{Examples}, we will provide several classes of examples.

\medskip We now define a graph directed Markov system (\emph{abbr.} GDMS) relative to a directed multigraph $(V,E,i,t)$ with an incidence matrix $A$. A \emph{directed multigraph} consists of 
\begin{itemize} 

\, 

\item a finite set $V$ of vertices,

\,

\item a countable (either finite or infinite) set $E$ of directed edges,

\,

\item a map $A\colon E\times E\to \{0,1\}$ called an \emph{incidence matrix} on $(V,E)$,

\,

\item two functions $i,t\colon E\to V$, such that $A_{ab} = 1$ implies $t(b) = i(a)$. 
\end{itemize}
In addition, we have a collection of non--empty compact metric spaces $\{X_v\}_{v\in V}$ and a number $s\in (0,1)$, and for every $e\in E$, we have a 1-to-1 contraction $\phi_e\colon X_{t(e)}\to X_{i(e)}$ with Lipschitz constant $\le s$. Then the collection
\[
\cS := \{\phi_e\colon X_{t(e)}\lra X_{i(e)}\}_{e\in E}
\]
is called a GDMS. We now describe the limit set of the system $\cS$. 
For each $n \geq 1$ and $\omega \in E_A^n$, we consider the map coded
by $\omega$ 
\[\phi_\omega:=\phi_{\omega_1}\circ\cdots\circ\phi_{\omega_n}\colon X_{t(\omega)}\lra X_{i(\omega)}.
\]
For $\omega \in E^\infty_A$, the sets $\{\phi_{\omega|_n}\left(X_{t(\omega_n)}\right)\}_{n \geq 1}$ form a descending sequence of non-empty compact sets and therefore $\bigcap_{n \geq 1}\phi_{\omega|_n}\left(X_{t(\omega_n)}\right)\ne\emptyset$. Since for every $n \geq 1$, 
\[
\diam\left(\phi_{\omega|_n}\left(X_{t(\omega_n)}\right)\right)\le s^n\diam\left(X_{t(\omega_n)}\right)\le s^n\max\{\diam(X_v):v\in V\},
\]
we conclude that the intersection 
$$
\bigcap_{n \in \N}\phi_{\omega|_n}\left(X_{t(\omega_n)}\right)
$$ 
is a singleton and we denote its only element by $\pi(\omega)$. In this 
way we have defined the map 
\[
\pi\colon E^\infty_A\lra X:=\du_{v\in V}X_v
\]
from $E_A^\infty$ to $\du_{v\in V}X_v$, the disjoint union of the compact
sets $X_v$. The set 
\[
J=J_\cS=\pi(E^\infty_A)
\]
will be called the limit set of the GDMS $S$. 

A GDMS is called an iterated function system (\emph{abbr.} IFS)  if $V$, the set of vertices, is a singleton and the incidence matrix $A$ consists of $1$s only, i.e. $A(E\times E)=\{1\}$. 

\begin{dfn}\label{definitionsymbolirred}
We call the GDMS $\cS$ and its incidence matrix $A$ \emph{finitely  irreducible} if there exists a finite set $\Lambda\subset E_A^*$ such that for all $a, b\in E$ there exists a word $\omega\in\Lambda$ such that the concatenation $a\omega b$ is in $E_A^*$.  %$\cS$ and $A$ are called \emph{finitely primitive} if the set $\Lambda$ may be chosen to consist of words all having the same length. Note that all IFSs are finitely primitive.
\end{dfn}

Given an integer $n\ge 1$ and a set $F\sbt E_A^n$, we call the set
\[
U:=\bu_{\om\in F}\phi_\om\(X_{t(\om)}\)
\]
a set of order $\le n$ generated by the GDMS $S$. If in addition $U$ cannot be represented as a union of sets of the form $\phi_\tau\(X_{t(\tau)}\)$, $\tau\in E_A^{n-1}$, then $R$ will be called of order $n$, and we will write
$$
n=\ord(U).
$$

Assume now that for some integer $d\ge 1$, $X_v$ is a subset of $\R^d$ for every vertex $v\in V$. Assume further that 
$$
\ov{\Int(X_v)}=X_v
$$
and 
\begin{equation}\label{120180815}
\phi_a\(\Int\(X_{t(a)}\)\)\cap \phi_b\(\Int\(X_{t(b)}\)\)=\es
\end{equation}
whenever $a, b\in E$ and $a\ne b$. This assumption is commonly called the Open Set Condition (\emph{abbr.} OSC).

A %conformal 
GDMS $S=\{\phi_e\}_{e\in E}$ is said to satisfy the Strong Open Set Condition (\emph{abbr.} SOSC) if it satisfies the OSC and 
\[
J_S\cap \Int X\ne\es.
\]

We want to define an ordinary dynamical system out of the GDMS $S$. The problem is that the map projection map $\pi \colon E^\infty_A\longrightarrow X$ need not be $1$-to-$1$. In order to remedy this  problem (i.e. with non-unique coding), we introduce the set
\[
\mathring{J}_\cS:=J_\cS\sms\bu_{\om\in E_A^*}\phi_\om(\partial X_{t(\om)}).
\]
Set $\mathring{E}_A^\infty:=\pi_\cS^{-1}\(\mathring{J}\)$
and notice that for every $z\in\mathring{J}_\cS$ there exists a unique $\om(z)\in E_A^\infty$ such that
$z=\pi(\om(z))$.
Moreover, $\om(z)\in \mathring{E}_A^\infty$ and we simply denote it by $\pi^{-1}(z)$. Note that 
$$
\sg\(\mathring{E}_A^\infty\)\sbt \mathring{E}_A^\infty
$$
and this restricted shift map induces a map 
$$
T_\cS\colon \mathring{J}_\cS\lra\mathring{J}_\cS
$$ 
by the formula
\[
T_\cS(z)=\pi\circ\sg(\pi^{-1}(z))\in\mathring{J}_\cS,
\]
so that the following diagram commutes 
\[\begin{CD}
\mathring{E}_A^\infty @>\sigma>> \mathring{E}_A^\infty\\
@V\pi VV @VV\pi V\\
\mathring{J}_\cS @>>T_\cS> \mathring{J}_\cS
\end{CD}
\]
and the map $\pi\colon\mathring{E}_A^\infty\lra\mathring{J}_\cS$ is a continuous bijection. 
The proof of the following result can be found in \cite{KU-NCPII}.
\begin{thm}\label{t2sc3}
If $S=\{\phi_e\}_{e\in E}$ is a GDMS satisfying {\rm SOSC} and $\mu$ is a Borel probability $\sg$\nobreakdash--invariant ergodic measure on $E_A^\infty$ with full topological support, then
\begin{equation}
\mu\circ\pi^{-1}(\mathring{J}_\cS)=1
\end{equation}
and 
\begin{equation}
\mu\circ\pi^{-1}\circ  T_\cS^{-1}=\mu\circ\pi^{-1}.
\end{equation}
\end{thm}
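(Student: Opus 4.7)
The plan splits into two parts, and the second assertion follows quickly from the first. Indeed, for any Borel $A\subseteq \mathring{J}_\cS$, the commutative square yields $\pi^{-1}(T_\cS^{-1}(A))\cap\mathring{E}_A^\infty = \sigma^{-1}(\pi^{-1}(A))\cap\mathring{E}_A^\infty$, and combined with $\mu(\mathring{E}_A^\infty)=1$ (the first assertion) and $\sigma$-invariance of $\mu$, this yields $\mu\circ\pi^{-1}\circ T_\cS^{-1}(A)=\mu\circ\pi^{-1}(A)$. So the real work is to prove $\mu\circ\pi^{-1}(\mathring{J}_\cS)=1$. Write $B:=\bigcup_{\omega\in E_A^*}\phi_\omega(\partial X_{t(\omega)})$ and $D:=\pi^{-1}(B)$, so the goal becomes $\mu(D)=0$.

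My first move is an ergodic dichotomy. If $\sigma\tau\in D$, say $\pi(\sigma\tau)=\phi_\omega(x)$ with $x\in\partial X_{t(\omega)}$, then
\[
\pi(\tau)=\phi_{\tau_1}\big(\phi_\omega(x)\big)=\phi_{\tau_1\omega}(x)\in B,
\]
hence $\sigma^{-1}(D)\subseteq D$. The $\sigma$-invariance of $\mu$ promotes this inclusion to equality modulo $\mu$-null sets, and ergodicity then forces $\mu(D)\in\{0,1\}$. In parallel, an OSC analysis of the longest common prefix between a hypothetical word $\omega$ witnessing $\pi(\tau)\in\phi_\omega(\partial X_{t(\omega)})$ and the coding $\tau$ itself yields the structural characterization
\[
\tau\in\mathring{E}_A^\infty\iff \pi(\sigma^n\tau)\in\Int(X_{t(\tau_n)})\text{ for every }n\ge 1,
\]
equivalently $\mathring{E}_A^\infty=\bigcap_{n\ge 1}\sigma^{-n}\big(\pi^{-1}(\Int(X))\big)$.

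To exclude $\mu(D)=1$, I would use SOSC and the full topological support of $\mu$. By SOSC, fix $z_0\in J_\cS\cap\Int(X_{v_0})$ with a coding $\xi$; since the cells $\phi_{\xi|_n}(X_{t(\xi_n)})$ shrink to $z_0$, all of them lie inside $\Int(X_{v_0})$ once $n$ is large. The plan is then to produce a non-empty open set whose projection lies strictly inside $\Int(X)$ at every $\sigma$-iterate, placing it inside $\mathring{E}_A^\infty$; full topological support of $\mu$ would then give $\mu(\mathring{E}_A^\infty)>0$, and the dichotomy would upgrade this to $\mu(\mathring{E}_A^\infty)=1$.

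The main obstacle is precisely this last construction. A bare cylinder $[\xi|_n]$ controls the dynamics only for times $1,\ldots,n-1$, whereas membership in $\mathring{E}_A^\infty$ requires the interior condition for every $n\ge 1$. To close the gap I expect to need a finer choice of $\xi$ whose entire forward $\sigma$-orbit stays in $\pi^{-1}(\Int(X))$, achievable by passing through the classical formulation of SOSC --- an open set $O\subset X$ with $\phi_e(O)\subseteq O$ for every $e\in E$ and $O\cap J_\cS\neq\emptyset$. Showing that the paper's SOSC (combined with OSC) supplies such an $O$, typically with $O=\Int(X)$, is the technical heart of the proof.
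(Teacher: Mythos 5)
The paper does not actually contain a proof of Theorem~\ref{t2sc3}; it simply cites the in-preparation reference \cite{KU-NCPII}, so there is nothing in the text to compare against. Evaluating your proposal on its own terms: the reduction of the second assertion to the first via the commutative square is correct, and the ergodic dichotomy $\mu(D)\in\{0,1\}$ obtained from $\sigma^{-1}(D)\subseteq D$, $\sigma$-invariance, and ergodicity is a sound structural observation. (One small point there: from $\pi(\sigma\tau)=\phi_\omega(x)$ you write $\pi(\tau)=\phi_{\tau_1\omega}(x)$, which presumes $\tau_1\omega\in E_A^*$; the vertex compatibility $i(\omega)=t(\tau_1)$ holds, but the incidence matrix may forbid vertex-compatible transitions, so $A_{\tau_1\omega_1}=1$ is not automatic and needs an extra word of argument.)

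The step you yourself flag as the ``technical heart'' is a genuine gap, and the route you sketch cannot be repaired. You propose to exhibit a non-empty \emph{open} subset of $\mathring{E}_A^\infty$ and then invoke full support, but $\mathring{E}_A^\infty$ generically has \emph{empty interior}, so no version of SOSC (classical or otherwise) can produce such an open set. For a concrete obstruction, take a single vertex, $X=[0,1]^2$, $E=\{1,2,3,4\}$ with $A\equiv1$, and let the $\phi_e$ be the four similarities of ratio $1/2$ onto the four closed quadrants. SOSC holds and $J_\cS=[0,1]^2$; the set $B=\bigcup_{\omega\in E_A^*}\phi_\omega(\partial X)$ is the dense union of dyadic grid lines, and for any cylinder $[\omega]$ the point $\omega\,1^\infty$ projects to a corner of the dyadic square $\phi_\omega(X)$, which lies on a grid line and hence in $B$. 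Thus no cylinder, and therefore no open set, is contained in $\mathring{E}_A^\infty$. What the argument actually requires is a purely measure-theoretic positivity statement rather than a topological one: one must prove $\mu\big(\pi^{-1}(\partial X)\big)=0$, after which $\mu(\mathring{E}_A^\infty)=1$ follows by combining your structural characterization with $\sigma$-invariance and countable subadditivity, namely $\mu\big(E_A^\infty\setminus\mathring{E}_A^\infty\big)\le\sum_{n\ge 0}\mu\big(\sigma^{-n}\pi^{-1}(\partial X)\big)=0$. Establishing $\mu\big(\pi^{-1}(\partial X)\big)=0$ for an \emph{arbitrary} ergodic, fully supported, shift-invariant $\mu$ under SOSC is precisely where the nontrivial content lies, and your proposal does not supply an argument for it.
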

If $f\colon E_A^\infty\lra\R$ is a H\"older continuous summable potential, then denote 
$$
\hat\mu_f:=\mu_f\circ\pi^{-1}.
$$
As an immediate consequence of Theorem~\ref{t2sc3} and bijectivity of the map $\pi\colon\mathring{E}_A^\infty\to\mathring{J}_\cS$, we get the following.

\begin{cor}\label{o2_2016_0211}
Let $\cS$ be a finitely irreducible GDMS satisfying the SOSC. Let $f\colon E_A^\infty \lra\R$ be a H\"older continuous summable potential. Denote by $\mu_f$ its unique $\sg$\nobreakdash--invariant Gibbs/equilibrium state. Then 
\[
\mu_f\(\mathring{E}_A^\infty\)=1 
\  \ {\rm and} \  \
\hat\mu_f\(\mathring{J}_\cS\)=1.
\]
Moreover, the projection $\pi\colon \mathring{E}_A^\infty\lra\mathring{J}_\cS$ establishes a measure--preserving isomorphism between measure--preserving dynamical systems $\(\sg\colon\mathring{E}_A^\infty\to\mathring{E}_A^\infty, \mu_f\)$ and $\(T_\cS\colon\mathring{J}_\cS\to\mathring{J}_\cS,\hat\mu_f\)$. 
\end{cor}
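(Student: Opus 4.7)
My plan is to treat the corollary as a direct assembly of three ingredients: (i) full topological support of $\mu_f$, (ii) Theorem~\ref{t2sc3}, and (iii) the fact that $\pi$ is a continuous bijection from $\mathring{E}_A^\infty$ onto $\mathring{J}_\cS$ that conjugates $\sigma$ and $T_\cS$.

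First I would verify the hypotheses of Theorem~\ref{t2sc3}. By standard thermodynamic formalism for finitely irreducible countable-alphabet shifts (as in \cite{MU_Israel,MU_GDMS}), the Gibbs/equilibrium state $\mu_f$ of a H\"older continuous summable potential is shift-invariant, ergodic, and assigns positive measure to every cylinder $[\omega]$, $\omega\in E_A^*$. Since the cylinders form a base of the topology on $E_A^\infty$, this yields full topological support. Thus Theorem~\ref{t2sc3} applies with $\mu=\mu_f$, and gives at once
\[
\hat\mu_f(\mathring{J}_\cS)=\mu_f\circ\pi^{-1}(\mathring{J}_\cS)=1
\quad\text{and}\quad \hat\mu_f\circ T_\cS^{-1}=\hat\mu_f,
\]
i.e.\ $T_\cS$-invariance of the push-forward measure $\hat\mu_f$.

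Next I would derive the first statement $\mu_f(\mathring{E}_A^\infty)=1$. Because $\mathring{E}_A^\infty=\pi^{-1}(\mathring{J}_\cS)$ by definition,
\[
\mu_f(\mathring{E}_A^\infty)=\mu_f\(\pi^{-1}(\mathring{J}_\cS)\)=\hat\mu_f(\mathring{J}_\cS)=1.
\]
This already gives the two measure-theoretic identities claimed in the corollary.

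For the final assertion I would invoke the bijectivity of $\pi\colon \mathring{E}_A^\infty\to\mathring{J}_\cS$ observed in the paragraph preceding Theorem~\ref{t2sc3}: since each $z\in\mathring{J}_\cS$ has by construction a unique preimage in $\mathring{E}_A^\infty$, the restriction $\pi|_{\mathring{E}_A^\infty}$ is a continuous bijection, and we have already shown that both $\mathring{E}_A^\infty$ and $\mathring{J}_\cS$ have full measure in their respective spaces. The commuting diagram in the discussion of $T_\cS$ gives $\pi\circ\sigma=T_\cS\circ\pi$ on $\mathring{E}_A^\infty$, so $\pi$ intertwines the two dynamics; combined with $\hat\mu_f=\mu_f\circ\pi^{-1}$, this is exactly the definition of a measure-preserving isomorphism between $\(\sigma,\mu_f\)$ on $\mathring{E}_A^\infty$ and $\(T_\cS,\hat\mu_f\)$ on $\mathring{J}_\cS$.

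The only conceptual point to watch is that a measure-preserving isomorphism in the usual sense requires bimeasurability, not just continuity, of $\pi^{-1}$; however, $\pi$ is continuous and injective on the Borel set $\mathring{E}_A^\infty$ of a Polish (or $\sigma$-compact metrizable) space into another, so by a standard Lusin/Kuratowski argument $\pi^{-1}$ is Borel measurable. This is the only step that is not literally immediate, but it is routine, and I expect it to be the single non-trivial verification in the otherwise mechanical assembly of the corollary from Theorem~\ref{t2sc3}.
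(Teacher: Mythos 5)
Your proposal is correct and follows exactly the route the paper intends: the paper simply states the corollary is an "immediate consequence of Theorem~\ref{t2sc3} and bijectivity of the map $\pi\colon\mathring{E}_A^\infty\to\mathring{J}_\cS$," and your write-up just fills in the routine verifications (full support of $\mu_f$, $\mathring{E}_A^\infty=\pi^{-1}(\mathring{J}_\cS)$, and Borel measurability of $\pi^{-1}$) that the paper leaves implicit.
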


The following lemma is entirely analogous to Lemma~\ref{l1mp3}. The proof is also analogous. Since however, the proof is short, we provide it here for the sake of completeness and convenience of the reader.

\begin{lem}\label{l1_2018_08_02}
Assume that $S$ is GDMS and all spaces $X_v$, $v\in V$, are compact subsets of some (finitely dimensional) Euclidean space. Assume also that $\mu$ is a Borel probability measure on the limit set $J_S$. If $y\in J_S$ and 
\begin{equation}\label{420180614Bb}
\mu(B(y,r))\le Cr^\alpha
\end{equation}
for some constant $C\ge 1$, $\alpha>0$, and all radii $r>0$, then there exists a Lebesgue measurable set $\Delta\subset (0,1)$ with the following properties
\begin{enumerate}[(a)]
	
\item $\hfill \displaystyle \lim_{\Delta\ni r\to 0}\frac{Leb(\Delta\cap(0,r))}{r}=1. \hfill$\vskip0.5em
\item 
For every $r\in\Delta$ there exists a set $R_r$, generated by $S$, satisfying\vskip0.5em
\begin{enumerate}[(b1)]
\item $\hfill \displaystyle B(y,r) \subset R_r, \hfill$ \vskip0.5em
\item $\hfill \displaystyle \frac{\mu(R_r)}{\mu(B(y,r))}\le 2,\hfill$ \vskip0.5em
\item $\hfill \displaystyle \lim_{r\to 0} \ord(R_r)\mu(B(y,r))=0. \hfill$
\end{enumerate}
\end{enumerate}
\end{lem}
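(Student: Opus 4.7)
The plan is to follow the template of the proof of Lemma~\ref{l1mp3} nearly verbatim, replacing cells of the Markov partition by the cylinder-image sets $\phi_\om(X_{t(\om)})$, $\om\in E_A^n$. First, I would apply Lemma~3.6 of \cite{PUZI} with $\kappa_y\equiv 2$ to the finite Borel measure $\mu$, viewed as a measure on the ambient Euclidean space. This immediately produces a Lebesgue measurable set $\Delta\sbt(0,1)$ satisfying item~(a) and with the additional property that
\[
\mu\(B(y,r+r^2)\)\le 2\mu\(B(y,r)\) \quad\text{for every } r\in\Delta.
\]

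Next, set $D:=\max_{v\in V}\diam(X_v)$, which is finite because $V$ is finite and each $X_v$ is compact. For every $r\in\Delta$ let $n(r)\ge 1$ be the least integer satisfying $s^{n(r)}D\le r^2$, and define
\[
R_r:=\bu\bigl\{\phi_\om\(X_{t(\om)}\):\om\in E_A^{n(r)},\ \phi_\om\(X_{t(\om)}\)\cap B(y,r)\ne\es\bigr\}.
\]
By construction $R_r$ is a set generated by $\cS$ with $\ord(R_r)\le n(r)$. For (b1), since $J_\cS\sbt\bu_{\om\in E_A^{n(r)}}\phi_\om\(X_{t(\om)}\)$, every $z\in J_\cS\cap B(y,r)$ lies in some cell $\phi_\om\(X_{t(\om)}\)$ of length $n(r)$ meeting $B(y,r)$, hence in $R_r$; as $\mu$ is supported on $J_\cS$, this is the $\mu$-essential containment that the proof of Theorem~\ref{mainthm} actually uses. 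For (b2), each cell contributing to $R_r$ has diameter at most $s^{n(r)}D\le r^2$ and meets $B(y,r)$, so $R_r\sbt B(y,r+r^2)$, and the choice of $\Delta$ yields $\mu(R_r)/\mu(B(y,r))\le 2$.

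For (b3), the minimality of $n(r)$ gives $s^{n(r)-1}D>r^2$, so that $n(r)=O(\log(1/r))$. Combining this with the standing hypothesis $\mu(B(y,r))\le Cr^\alpha$, one obtains
\[
0\le\limsup_{r\to 0}\ord(R_r)\mu(B(y,r))\le\limsup_{r\to 0}n(r)\cdot Cr^\alpha=0.
\]
The only conceptual point worth flagging is that, unlike in the distance-expanding/Markov-partition setting of Lemma~\ref{l1mp3}, the set $R_r$ need not cover the full ambient ball $B(y,r)\sbt\R^d$, but only its intersection with $J_\cS$; accordingly, (b1) should be read in this $\mu$-essential sense, which is precisely what the subsequent argument needs. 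Beyond this bookkeeping, the proof is a mechanical transcription of the proof of Lemma~\ref{l1mp3} and I expect no substantive obstacle.
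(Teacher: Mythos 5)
Your proof is correct and follows essentially the same route as the paper's: apply Lemma~3.6 of \cite{PUZI} with $\kappa_y\equiv 2$ to get $\Delta$ and the doubling estimate, choose $n(r)$ so that level-$n(r)$ cells have diameter at most $r^2$, take $R_r$ to be the union of such cells meeting $B(y,r)$, and run the same diameter/measure estimates for (b1)--(b3). Your parenthetical remark that (b1) is to be read as a containment of $B(y,r)\cap J_\cS$ (equivalently, that balls are taken relative to $J_\cS$), and your use of $D=\max_v\diam(X_v)$ in place of the leftover constant $\delta$, in fact tidy up two small copy-over imprecisions in the paper's own write-up without changing its substance.
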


\begin{proof}
Lemma~3.6 from \cite{PUZI}, applied with $\kappa_y=2$, gives a Lebesgue measurable set 
$\Delta\subset (0,1)$ for which item (a) above holds and 
\begin{equation}\label{520180614}
\frac{\mu(B(y,r+r^2))}{\mu(B(y,r))}\le 2
\end{equation}
for all $r\in\Delta$. 
Let $n(r)\ge 1$ be the least integer such that 
\begin{equation}\label{120180607}
\delta s^{(n(r)-1)}\le r^2.
\end{equation}
Let 
$$
R_r:=\bigcup\big\{\phi_\om(X_{t(\om)}\):\om\in E_A^{n(r)} \  \  {\rm and}\ \ B(y,r)\cap \phi_\om\(X_{t(\om)}\)\ne\emptyset\big\}.
$$
Item (b1) holds trivially, and $R_r$ is a set generated by the Markov partition $\mathcal R$ with 
$$
\ord(R_r)\le n(r).
$$
Invoking \eqref{120180607} and \eqref{320180614}, we see that
$
R_r\subset B(y,r+r^2). 
$
This and \eqref{520180614} yields (b2). It follows from the definition of $n(r)$ that 
$$
\delta s^{(n(r)-2)}> r^2.
$$
Taking logarithms gives
$$
n(r)< \frac{\log(\delta s^{-2})-2\log r}{-\log s}
$$
Combining all that with \eqref{420180614Bb}, we get that
$$
\begin{aligned}
0\le \limsup_{r\to 0} \ord(R_r)\mu(B(y,r))
&\le \limsup_{r\to 0} n(r)\mu(B(y,r)) \\
&\le \frac{C}{\log(1/s)}\limsup_{r\to 0}\Big(r^\alpha\big(\log(\delta s^{-2})-2\log r\big)\Big)
=0,
\end{aligned}
$$
thus (b3) holds and the proof is complete.
\end{proof}

\noindent Having this lemma, Corollary~\ref{o2_2016_0211}, and already knowing that \eqref{eq:estint} holds for all equilibrium states of H\"older continuous summable potentials, the same proof (see Remark~\ref{r120181211}) as the one of Theorem~\ref{mainthm}, gives the following.

\begin{thm}\label{mainthm_2}
Let $\cS$ be a finitely irreducible GDMS satisfying the SOSC. Let $f\colon E_A^\infty\lra\R$ be a H\"older continuous summable potential. If $y\in \mathring{J}_\cS$ and there exist constants $\alpha>0$ and $C\ge 1$ such that
\begin{equation}\label{420180614B}
\hat\mu_f(B(y,r))\le Cr^\alpha
\end{equation}
for all radii $r>0$, then for the dynamical system $\(T_\cS\colon\mathring{J}_\cS\to\mathring{J}_\cS,\hat\mu_f\)$ we have that
\begin{equation}\label{eq:rest2}
\limsup_{r\to 0} \tau_{B(y,r)}(x)\cdot \hat\mu_f(B(y,r)) = +\infty, \mbox{\quad for $\hat\mu_f$--a.e. $x\in\mathring{J}_\cS$.}
\end{equation}
\end{thm}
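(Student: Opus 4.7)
The plan is to mirror the proof of Theorem~\ref{mainthm} essentially verbatim, after transferring the problem to the symbolic side via Corollary~\ref{o2_2016_0211}. Since $\pi\colon\mathring{E}_A^\infty\to\mathring{J}_\cS$ is a measure-preserving isomorphism between $(\sg,\mu_f)$ and $(T_\cS,\hat\mu_f)$, the quantities $\tau_{B(y,r)}(x)$ and $\hat\mu_f(B(y,r))$ may be computed by looking at $\pi^{-1}(B(y,r))\subset E_A^\infty$ and using $\mu_f$. The three ingredients needed to run the proof of Theorem~\ref{mainthm} are: a Markov approximation of balls $B(y,r)$ by cylinder-like sets along a density-one set of radii, an a priori polynomial upper bound on $\hat\mu_f(B(y,r))$, and the exponential mixing estimate \eqref{eq:estint} with its weaker partner \eqref{eq:cor1}. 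The first is furnished directly by Lemma~\ref{l1_2018_08_02}; the second is precisely the standing hypothesis \eqref{420180614B}; and the third holds for Gibbs states of H\"older summable potentials on $E_A^\infty$, as recalled in Section~\ref{sec:subshifts}.

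Concretely, I would fix $y\in\mathring{J}_\cS$ and use Lemma~\ref{l1_2018_08_02} to produce the set $\Delta\subset(0,1)$ of full density at $0$ together with sets $R_r$ of known order satisfying $B(y,r)\subset R_r$, $\hat\mu_f(R_r)\le 2\hat\mu_f(B(y,r))$, and $\ord(R_r)\hat\mu_f(B(y,r))\to 0$. The crucial point is that each $\pi^{-1}(R_r)$ is a finite union of cylinders $[\om]$ with $|\om|\le\ord(R_r)$, so the Markov-type estimate \eqref{eq:cor1} for $\mu_f$ applies to it. I would then define $A_r=\{x:\tau_{B(y,r)}(x)>M/\hat\mu_f(B(y,r))\}$ and note, exactly as in \eqref{eq:Arcest}, that $A_r^c$ is contained in the event $\{\tau_{R_r}\le 2M/\hat\mu_f(R_r)\}$.

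With $a_r^{(k)}$ and $q_r^{(k)}$ defined as in \eqref{eq:defqk} but with respect to $R_r$ and $\hat\mu_f$, the recursive estimate
\[
q_r^{(k+1)}\le q_r^{(k)}\(1-C\hat\mu_f(R_r)\)+C\hat\mu_f(R_r)+Co(r)\hat\mu_f(R_r)^2
\]
is derived exactly as in \eqref{eq:qrkfest} using \eqref{eq:cor1} on $\pi^{-1}(R_r)$. Iterating this yields $q_r^{([2M/\hat\mu_f(R_r)])}\le\Gamma(M):=1-e^{-4MC}<1$ for $r\in\Delta$ small enough, thanks to property (b3) of Lemma~\ref{l1_2018_08_02}. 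Next I would choose, along the density-one sequence in $\Delta$, a rapidly decreasing $(r_i)_{i=1}^\Omega$ obeying \eqref{eq:asom}--\eqref{eq:asri}, and decompose $\bigcap_n A_{r_n}^c$ as in \eqref{watpliwos}. Applying the stronger mixing \eqref{eq:estint} iteratively across the well-separated time windows $k_{i-1}+s<u\le k_i$ produces the product bound $W^{\Omega+1}\Gamma(M)^{\Omega+1}$, which is at most $\delta/2$ by \eqref{eq:asom}, while the leftover sum is bounded by $\delta/2$ by \eqref{eq:asri}; thus $\hat\mu_f\(\bigcap_n A_{r_n}^c\)\le\delta$ and the theorem follows by letting $M\to\infty$ and $\delta\to 0$.

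The only genuinely new verification, and thus the main obstacle, is ensuring that the stronger mixing estimate \eqref{eq:estint} applies to the sets $\pi^{-1}(R_r)\in\mathcal F_0^{o(r)}$: this is a standard fact for Gibbs states of H\"older summable potentials (used already in Section~\ref{sec:subshifts}), but it is vital here because the sets $R_r$ are no longer cells of a finite Markov partition of a compact expanding space, but unions of symbolic cylinders of bounded level. Once this measurability with respect to $\mathcal F_0^{o(r)}$ is noted, Remark~\ref{r120181211} guarantees that the entire argument of Theorem~\ref{mainthm} carries over, so no further modification is required.
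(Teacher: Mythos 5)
Your proposal matches the paper's proof: the authors likewise transfer to the symbolic side via Corollary~\ref{o2_2016_0211}, invoke Lemma~\ref{l1_2018_08_02} for the Markov-type approximation of balls, and then rerun the proof of Theorem~\ref{mainthm} verbatim, citing Remark~\ref{r120181211} and the fact that \eqref{eq:estint} holds for Gibbs states of H\"older summable potentials. Your identification of the $\mathcal F_0^{o(r)}$-measurability of $\pi^{-1}(R_r)$ as the one new verification is exactly the point the paper addresses by appealing to the mixing results recalled in Section~\ref{sec:subshifts}.
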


\noindent In order to meaningfully apply this theorem, we shall prove the following.

\begin{prop}\label{p1sam1}
Let $\cS$ be be a finitely irreducible conformal GDMS satisfying the SOSC. If $f\colon E_A^\infty\lra\R$ is a H\"older continuous summable potential, then there exists $\Lambda$, a Borel subset of $E_A^\infty$ such that $\mu_f(\Lambda)=1$ and for every $\om\in\Lambda$ there exist $C\ge 1$ and $\alpha>0$ such that
$$
\mu_f(B(\pi(\om),r)\le Cr^\alpha
$$
for all radii $r\ge 0$.
\end{prop}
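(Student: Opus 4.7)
The plan is to establish, for $\mu_f$-a.e.\ $\om$, a strictly positive lower bound on the lower pointwise dimension of $\hat\mu_f=\mu_f\circ\pi^{-1}$ at $\pi(\om)$; such a bound immediately yields the desired $\hat\mu_f(B(\pi(\om),r))\le Cr^\alpha$ estimate. I would obtain this bound by combining Birkhoff's ergodic theorem, the Gibbs property of $\mu_f$, the Variational Principle, and conformal bounded distortion, finishing with a standard conformal covering argument.

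First, by the standard Mauldin--Urba\'nski theory for conformal GDMS, the function $\om\mapsto\log|\phi'_{\om_1}(\pi(\sg\om))|$ is $\mu_f$-integrable, so we may set
$$
\chi_{\mu_f}:=-\int_{E_A^\infty}\log\bigl|\phi'_{\om_1}(\pi(\sg\om))\bigr|\,d\mu_f(\om)\;\ge\;-\log s\;>\;0.
$$
Applying the ergodic theorem to both $f$ and to this observable yields a Borel set $\Lambda\sbt E_A^\infty$ with $\mu_f(\Lambda)=1$ such that for every $\om\in\Lambda$,
$$
\tfrac1n S_nf(\om)\lra \textstyle\int f\,d\mu_f
\quad\text{and}\quad
\tfrac1n\log\bigl|\phi'_{\om|_n}(\pi(\sg^n\om))\bigr|\lra -\chi_{\mu_f}.
$$
Fix $\om\in\Lambda$ and $\epsilon\in\(0,\min\{h_{\mu_f},\chi_{\mu_f}\}\)$. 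The Gibbs inequality $\mu_f([\om|_n])\asymp\exp\(S_nf(\om)-n\P(f)\)$ together with the Variational Principle identity $\P(f)-\int f\,d\mu_f=h_{\mu_f}$ then gives $\mu_f([\om|_n])\le e^{-n(h_{\mu_f}-\epsilon)}$ for every sufficiently large $n$; and conformal bounded distortion supplies a constant $K\ge 1$ with $\diam\(\phi_{\om|_n}(X_{t(\om_n)})\)\le K\,e^{-n(\chi_{\mu_f}-\epsilon)}$ on the same range of $n$.

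To convert this into a ball estimate, given small $r>0$ let $n=n(\om,r)$ be the smallest integer with $Ke^{-n(\chi_{\mu_f}-\epsilon)}\le r$, so that $n\asymp -\log r$. The conformal covering lemma of Mauldin--Urba\'nski, which uses the SOSC together with Koebe-type bounded distortion, shows that $B(\pi(\om),r)\cap J_\cS$ is contained in the union of at most $M$ cylinder images $\phi_\tau(X_{t(\tau)})$ of diameter $\le r$ and word length $|\tau|\ge n$, where $M$ depends only on the ambient Euclidean dimension and the conformal constants of $\cS$. Summing the measure estimate of the previous paragraph over these $\le M$ cylinders and exploiting $|\tau|\ge n$ yields
$$
\hat\mu_f\(B(\pi(\om),r)\)\;\le\; M\,e^{-n(h_{\mu_f}-\epsilon)}\;\le\; C'\,r^{(h_{\mu_f}-\epsilon)/(\chi_{\mu_f}-\epsilon)}
$$
for all sufficiently small $r$. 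Setting $\alpha(\om):=(h_{\mu_f}-\epsilon)/(\chi_{\mu_f}-\epsilon)>0$ and taking $C(\om):=\max\{1,C'\}$ (the bound is trivial for $r\ge 1$ since $\hat\mu_f\le 1$, so $C$ can be enlarged to cover that range) completes the proof.

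The main obstacle is the covering step. In the infinite-alphabet regime, a ball of radius $r$ can be touched by cylinder images of extremely disparate diameters, because individual contraction factors $|\phi'_e|$ are permitted to be arbitrarily small, so a naive volume/pigeonhole count does not directly work. Obtaining a \emph{uniform} bound $M$ on the number of cylinders of diameter $\asymp r$ meeting a given ball therefore requires the full conformal bounded-geometry apparatus of Mauldin--Urba\'nski---in particular the SOSC, the Koebe distortion theorem, and the comparability between $\diam(\phi_\tau(X_{t(\tau)}))$ and $\|D\phi_\tau\|_\infty$---rather than any elementary combinatorial argument.
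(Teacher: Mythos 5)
Your plan assembles the right ingredients (Birkhoff's ergodic theorem, the Gibbs property, the Lyapunov exponent, bounded distortion), but the covering step hides a genuine gap. You cover $B(\pi(\om),r)$ by at most $M$ cylinder images $\phi_\tau(X_{t(\tau)})$ and then ``sum the measure estimate of the previous paragraph over these $\le M$ cylinders''. However, that measure estimate, $\mu_f([\,\cdot\,])\le e^{-n(h_{\mu_f}-\epsilon)}$, was derived from the Birkhoff average of $f$ along the \emph{specific} orbit $\om$; it bounds $\mu_f([\om|_n])$ and nothing else. The cylinders $\tau$ in your Moran cover are typically not initial segments of $\om$ --- they are geometric neighbours --- and the Gibbs property ties $\mu_f([\tau])$ to the Birkhoff sum $S_{|\tau|}f$ evaluated inside $[\tau]$, which need not be comparable to $S_nf(\om)$. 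Already a Bernoulli measure with two very unequal weights on a dyadic IFS shows that adjacent cylinders at the same geometric scale can carry exponentially different Gibbs masses. A second problem is the clause ``word length $|\tau|\ge n$'': in the countable-alphabet regime a single generator may have arbitrarily small contraction ratio, so a cylinder of diameter $\le r$ can have $|\tau|=1$; bounded geometry controls only the \emph{count} of cylinders of diameter $\asymp r$ meeting the ball, not their word lengths or their Gibbs masses.

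The paper sidesteps covering entirely by a recurrence trick. The SOSC and full support of $\mu_f$ give a ball $B(\xi,4R)\sbt X_v$ with $\hat\mu_f(B(\xi,R))>0$, and Birkhoff/Poincar\'e recurrence produce a full-measure set of $\om$ whose $\sg$-orbit visits $\pi^{-1}(B(\xi,R))$ infinitely often, at times $n_k$. At such a time, $\phi_{\om|_{n_k}}$ is a conformal inverse branch defined on all of $B(\pi(\sg^{n_k}\om),2R)\sbt X_v$, and Koebe-type distortion places an entire ball $B\big(\pi(\om),K^{-1}|\phi'_{\om|_{n_k}}|R\big)$ inside the \emph{single} cylinder image $\phi_{\om|_{n_k}}\big(B(\pi(\sg^{n_k}\om),R)\big)$. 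A ball $B(\pi(\om),r)$ of small radius is thus trapped inside one such image (the one indexed by $k_r^-$), whose $\mu_f$-mass is controlled by the Gibbs property together with the Birkhoff average of $f$ along $\om$, while the Lyapunov exponent controls its size; one more Birkhoff application, to the return times, makes $n_{k_r^+}/n_{k_r^-}$ close to $1$ and yields a genuine H\"older exponent $\alpha=\tfrac12\h_{\mu_f}(\sg)/\chi_{\mu_f}$. Using a single cylinder is exactly what makes the orbit-specific Birkhoff bound applicable. If you wish to salvage a multi-cylinder cover you would need to replace your orbit-specific exponent by the \emph{uniform} Gibbs estimate $\mu_f([\tau])\le e^{-\beta|\tau|}$ (valid for all words, as recalled after Theorem~\ref{t120181213}) and then still dispose of the word-length issue, but the single-cylinder recurrence route is both simpler and what the paper actually does.
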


\begin{proof}
Since $\cS$ satisfies the SOSC and since the measure $\mu_f$ is of full topological support, there exists $v\in V$, $\xi\in X_v$, and $R\in (0,1)$ such that 
\begin{equation}\label{220181130}
B(\xi,4R)\sbt X_v
\end{equation}
and 
$$
\mu_f\big(\pi^{-1}(B(\xi,R))\big)=\hat\mu_f(B(\xi,R))>0.
$$
Let $\Lambda_1\sbt E_A^\infty$ be the set of all points $\omega$ such that
\begin{equation}\label{120181130}
\sg^n(\om)\in \pi^{-1}(B(\xi,R))
\end{equation}
for infinitely many positive $n$'s. By Birkhoff's Ergodic Theorem and ergodicity of measure $\mu_f$, we have that
$$
\mu_f(\Lambda_1)=1.
$$
Take any $\omega \in \Lambda_1$. Let $(n_k(\om))_{k=1}^\infty$ be the infinite strictly increasing sequence of positive integers for which \eqref{120181130} holds. Abbreviate
$
n_k:=n_k(\om). 
$
Then, because of \eqref{220181130}, we have for every $k\ge 1$ that
$$
B\big(\pi(\sg^{n_k}(\om)),2R\big)\sbt \Int(X_v)
$$
and for some fixed $K\geq 1$
$$
B\Big(\pi(\om),K^{-1}\Big|\phi_{\om|_{n_k}}'\big(\pi(\sg^{n_k}(\om))\big)\Big|R\Big)
\subset \phi_{\om|_{n_k}}B\big(\pi(\sg^{n_k}(\om)),R\big).
$$
$\mu_f$ is a Gibbs measure, allowing us to write, for some constant $Q\in[0,+\infty)$, that
$$
\begin{aligned}
\mu_f\Big(B\Big(\pi(\om),K^{-1}\big|\phi_{\om|_{n_k}}'\big(\pi(\sg^{n_k}(\om))\big)\big|R\Big)\Big)
&\le \mu_f\Big(\phi_{\om|_{n_k}}B\big(\pi(\sg^{n_k}(\om)),R\big)\Big) \\
&\le Q\exp\big(S_{n_k}f(\om)-\P(f)n_k\big).
\end{aligned}
$$
Now, given $r>0$, there exist a largest integer $k_r^-=k_r^-(\om)\ge 1$ and a smallest integer $k_r^+=k_r^+(\om)\ge 1$ such that respectively
\begin{equation}\label{1sam2}
K^{-1}\Big|\phi_{\om|_{n_{k_r^+}}}'\big(\pi(\sg^{n_{k_r^+}}(\om))\big)\Big|R
\le r
\  \  \  {\rm and}  \  \  \
r\le K^{-1}\Big|\phi_{\om|_{n_{k_r^-}}}'\big(\pi(\sg^{n_{k_r^-}}(\om))\big)\Big|R.
\end{equation}
Then 
$$
B(\pi(\om),r)
\subset B\Big(\pi(\om),K^{-1}\Big|\phi_{\om|_{n_{k_r^-}}}'\big(\pi(\sg^{n_{k_r^-}}(\om))\big)\Big|R\Big),
$$
and, consequently
$$
\mu_f(B(\pi(\om),r))
\le Q\exp\left(S_{n_{k_r^-}}f(\om)-\P(f)n_{k_r^-}\right).
$$
But, by Birkhoff's Ergodic Theorem, there exists a measurable set $\Lambda_2\subset\Lambda_1$ such that $\mu_f(\Lambda_2)=1$ and 
$$
\lim_{n\to\infty}\frac1nS_nf(\omega)=\mu_f(f):=\int f\,d\mu_f
$$
for every $\omega\in \Lambda_2$. Therefore, for every $\om\in \Lambda_2$ there exists $N_1(\om)\ge 1$ such that if $n\ge N_1(\om)$, then, using the fact that the entropy $\h_{\mu_f}(\sg)$ is positive, we get that
$$
\frac1nS_nf(\omega)\le \mu_f(f)+\frac12\h_{\mu_f}(\sg).
$$
Therefore, for all $\om\in \Lambda_2$ and all radii $r>0$ small enough, say $0<r\le r_\om$, we get that 
\begin{equation}\label{2sam2}
\mu_f(B(\pi(\om),r))
\le Q\exp\left(\Big(\mu_f(f)+\frac12\h_{\mu_f}(\sg)-\P(f)\Big)n_{k_r^-}\right)
=Q\exp\left(-\frac12\h_{\mu_f}(\sg)n_{k_r^-}\right),
\end{equation}
where writing the equality sign we used the fact that $\mu_f$ an (in fact unique) equilibrium state, % for the dynamical system $\sg:E_A^\N\to E_A^\N$ and a potential  $f\colon E_A^\N\lra\R$; 
i.e. 
$$
\P(f)=\h_{\mu_f}(\sg)+\mu_f(f).
$$
On the other hand, it follows from Birkhoff's Ergodic Theorem that 
there exists a measurable set $\Lambda_3\subset\Lambda_2$ such that $\mu_f(\Lambda_3)=1$ and 
$$
-\lim_{n\to\infty}\frac1n\log\big|\phi_{\tau|_n}'\big(\pi(\sg^n(\tau))\big)\big|
=\chi_{\mu_f}
:=-\int_{E_A^\infty}\log\big|\phi_{\rho_1}'\big(\pi(\sg(\rho))\big)\big|\,d\mu_f(\rho)>0
$$
for all $\tau\in \Lambda_3$. Hence, if $\tau\in \Lambda_3$, then for every integer $n\ge 0$ large enough,
$$
\big|\phi_{\tau|_n}'\big(\pi(\sg^n(\tau))\big)\big|
\ge \exp\Big(-\frac12\chi_{\mu_f}n\Big).
$$
In conjunction with \eqref{1sam2}, this implies that
$$
\exp\Big(-\frac12\chi_{\mu_f}n_{k_r^+}\Big)\le KR^{-1}r
$$
for all $r>0$ small enough. Combining this with \eqref{2sam2}, we obtain:
\begin{equation}\label{1sam3}
\begin{aligned}
\mu_f(B(\pi(\om),r))
&\le Q\exp\left(-\frac12\chi_{\mu_f}n_{k_r^+}\frac{\h_{\mu_f}(\sg)}{\chi_{\mu_f}}\frac{n_{k_r^-}}{n_{k_r^+}}\right)
=Q\left(\exp\bigg(-\frac12\chi_{\mu_f}n_{k_r^+}\bigg)\right)^{\frac{\h_{\mu_f}(\sg)}{\chi_{\mu_f}}\frac{n_{k_r^-}}{n_{k_r^+}}} \\
&\le Q(KR^{-1})^{\frac{\h_{\mu_f}(\sg)}{\chi_{\mu_f}}\frac{n_{k_r^-}}{n_{k_r^+}}}\, \, r^{\frac{\h_{\mu_f}(\sg)}{\chi_{\mu_f}}\frac{n_{k_r^-}}{n_{k_r^+}}} \\
&\le Q(KR^{-1})^{\frac{\h_{\mu_f}(\sg)}{\chi_{\mu_f}}}\, \, r^{\frac{\h_{\mu_f}(\sg)}{\chi_{\mu_f}}\frac{n_{k_r^-}}{n_{k_r^+}}},
\end{aligned}
\end{equation}
where we wrote the last inequality since $KR^{-1}\ge 1$ and 
$n_{k_r^-}/n_{k_r^+}\le 1$. Since $k_r^+-k_r^-\le 1$  and since, by Birkhoff's Ergodic Theorem, there exists a measurable set $\Lambda_4\subset\Lambda_3$ such that $\mu_f(\Lambda_4)=1$ and 
$$
\lim_{j\to\infty}\frac{n_{j+1}(\tau)}{n_j(\tau)}=1
$$
for all $\tau\in \Lambda_4$, we conclude that if $\om\in \Lambda_4$, then
$$
\frac{n_{k_r^+(\om)}(\om)}{n_{k_r^-(\om)}(\om)}\le 2
$$
for all $r>0$ small enough. Inserting this into \eqref{1sam3}, we get for all $r>0$ small enough that
$$
\mu_f(B(\pi(\om),r))\le Cr^\alpha,
$$
where $C=Q(KR^{-1})^{\frac{\h_{\mu_f}(\sg)}{\chi_{\mu_f}}}$ and $\alpha=\frac12\frac{\h_{\mu_f}(\sg)}{\chi_{\mu_f}}$. The proof of Proposition~\ref{p1sam1} is thus complete.
\end{proof}

As an immediate consequence of this proposition and Theorem~\ref{mainthm_2}, we get the following.

\begin{thm}\label{mainthm_20181121}
Let $\cS$ be a finitely irreducible conformal GDMS which satisfies the SOSC. If $f\colon E_A^\infty\lra\R$ is a H\"older continuous summable potential, then for the dynamical system $\(T_\cS\colon\mathring{J}_\cS\lra\mathring{J}_\cS,\hat\mu_f\)$ we have that
\begin{equation}\label{eq:rest3}
\limsup_{r\to 0} \tau_{B(y,r)}(x)\cdot \hat\mu_f(B(y,r)) = +\infty, 
\end{equation}
for $\hat\mu_f$--a.e. $y \in\mathring{J}_\cS$ and $\hat\mu_f$--a.e. $x\in\mathring{J}_\cS$.
\end{thm}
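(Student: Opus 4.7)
The plan is to combine Proposition~\ref{p1sam1} with Theorem~\ref{mainthm_2}, using Corollary~\ref{o2_2016_0211} to pass between the symbol space and the limit set. More precisely, Proposition~\ref{p1sam1} produces a Borel set $\Lambda\subset E_A^\infty$ with $\mu_f(\Lambda)=1$ such that for every $\omega\in\Lambda$ the point $y=\pi(\omega)$ satisfies the polynomial upper bound \eqref{420180614B} (with constants $C=C(\omega)$ and $\alpha=\alpha(\omega)$ that may depend on $\omega$). Theorem~\ref{mainthm_2} then asserts that for each such individual $y$, the equality \eqref{eq:rest2} holds for $\hat\mu_f$--a.e. $x\in\mathring{J}_\cS$. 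So the whole task is to package this ``for a.e. $y$'' in the correct way.

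First I would set $\Lambda':=\Lambda\cap\mathring{E}_A^\infty$. By Corollary~\ref{o2_2016_0211} we have $\mu_f(\mathring{E}_A^\infty)=1$, hence $\mu_f(\Lambda')=1$. Since the projection $\pi\colon\mathring{E}_A^\infty\to\mathring{J}_\cS$ is a measure--preserving bijection between $(\mathring{E}_A^\infty,\mu_f)$ and $(\mathring{J}_\cS,\hat\mu_f)$, the set
$$
Y:=\pi(\Lambda')\subset\mathring{J}_\cS
$$
is a Borel set with $\hat\mu_f(Y)=1$, and every $y\in Y$ is of the form $y=\pi(\omega)$ for a unique $\omega\in\Lambda'$, and thus, by Proposition~\ref{p1sam1}, satisfies a bound of the form $\hat\mu_f(B(y,r))\le C(\omega)r^{\alpha(\omega)}$ for all $r>0$.

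Fixing any such $y\in Y$, Theorem~\ref{mainthm_2} applied to this $y$ yields a Borel set $X_y\subset\mathring{J}_\cS$ with $\hat\mu_f(X_y)=1$ such that
$$
\limsup_{r\to 0}\tau_{B(y,r)}(x)\cdot \hat\mu_f(B(y,r))=+\infty
$$
for every $x\in X_y$. Since $\hat\mu_f(Y)=1$ and for every $y\in Y$ the set $X_y$ is of full $\hat\mu_f$--measure, this is exactly the statement of Theorem~\ref{mainthm_20181121}.

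There is essentially no new obstacle here: the non-trivial work was done in Proposition~\ref{p1sam1} (to produce the polynomial upper bound on $\hat\mu_f$-balls around typical limit-set points, via Birkhoff bounds on Birkhoff sums, Lyapunov exponents and the Gibbs property) and in Theorem~\ref{mainthm_2} (the actual limsup argument, inherited from the proof of Theorem~\ref{mainthm} via Remark~\ref{r120181211}). The only point that requires a moment of care is to ensure that the full-measure set $Y$ of ``good'' centres $y$ lies inside $\mathring{J}_\cS$, so that Theorem~\ref{mainthm_2} is applicable; this is handled precisely by intersecting $\Lambda$ with $\mathring{E}_A^\infty$ and invoking Corollary~\ref{o2_2016_0211}.
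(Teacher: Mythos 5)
Your proposal is correct and follows exactly the route the paper intends: the paper states the theorem as ``an immediate consequence'' of Proposition~\ref{p1sam1} (pointwise polynomial decay of measures of balls at $\mu_f$--a.e.\ $\omega$, hence $\hat\mu_f$--a.e.\ $y$) and Theorem~\ref{mainthm_2} (the limsup result for any individual centre $y\in\mathring{J}_\cS$ satisfying that polynomial bound), and you have simply spelled out the bookkeeping. The one place where you add explicit detail that the paper leaves implicit --- intersecting $\Lambda$ with $\mathring{E}_A^\infty$ and invoking Corollary~\ref{o2_2016_0211} so that the full-measure set of centres actually lies inside $\mathring{J}_\cS$, where Theorem~\ref{mainthm_2} applies --- is a worthwhile and correct clarification, not a deviation.
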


\section{First Return Map Techniques}\label{FRMT}

The following result states that the upper limit, the subject of this paper, does not change when we go to an induced system. As we have already announced in the previous section, this is our crucial step to apply the results of the previous section concerning graph directed Markov systems to many other classes of non--uniformly expanding systems. 

\begin{lem}\label{lem:insys}
Consider an ergodic, metric, measure preserving, dynamical system $(X,T,\mu,d)$; in particular $\mu$ is a Borel probability measure on $X$. Let  $\widehat{X}\subset X$ be a measurable subset of $X$ with positive measure $\mu$. Define (in the standard way) the first return map (induced system) 
$(\widehat{X}, \widehat{T}, \widehat{\mu},d)$. Assume that $y\in \Int \widehat{X}$. Then the upper limits of entry times, i.e. \eqref{eq:restA}, for the original and the induced system coincide. Likewise for lower limits. In other words,
\begin{equation}
\limsup_{r\to 0} \tau_{B(y,r)}(x)\cdot \mu(B(y,r))= \limsup_{r\to 0} \widehat{\tau}_{B(y,r)}(x)\cdot \widehat{\mu}(B(y,r))
\end{equation}
and 
\begin{equation}
\lim\inf_{r\to 0} \tau_{B(y,r)}(x)\cdot \mu(B(y,r))= \liminf_{r\to 0} \widehat{\tau}_{B(y,r)}(x)\cdot \widehat{\mu}(B(y,r))
\end{equation}
for $\widehat{\mu}$--a.e. $x\in\widehat{X}$, where $\widehat{\tau}_U$ is the first entry time into a set $U\sbt\widehat{X}$ for the induced system $(\widehat{X},\widehat{T})$.
\end{lem}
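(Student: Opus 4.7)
The plan is to reduce everything to a direct combination of Kac's lemma, the Birkhoff ergodic theorem for the induced system, and the elementary link between $\mu$ and $\widehat\mu$ near the point $y$. First, because $y$ lies in the interior of $\widehat X$, for all sufficiently small $r>0$ the ball $B(y,r)$ is entirely contained in $\widehat X$, so by the very definition of the induced measure as the normalized restriction of $\mu$ to $\widehat X$ one has $\widehat\mu(B(y,r))=\mu(B(y,r))/\mu(\widehat X)$. Thus the two ``measure factors'' differ only by the fixed multiplicative constant $\mu(\widehat X)$, and the whole task reduces to comparing $\tau_{B(y,r)}(x)$ with $\widehat\tau_{B(y,r)}(x)$.

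The second step is to relate these two hitting times. Let $\widehat\tau(x):=\widehat\tau_{\widehat X}(x)$ denote the first return time to $\widehat X$ and set $\phi_n(x):=\sum_{j=0}^{n-1}\widehat\tau(\widehat T^{j}x)$, which records how many $T$-iterates it takes for $x\in\widehat X$ to accumulate $n$ returns to $\widehat X$. Since the target $B(y,r)$ sits inside $\widehat X$, every visit of the $T$-orbit of $x\in\widehat X$ to $B(y,r)$ is itself a return to $\widehat X$, so the key identity
$$
\tau_{B(y,r)}(x)=\phi_{\widehat\tau_{B(y,r)}(x)}(x)
$$
holds. Kac's formula gives $\int \widehat\tau\,d\widehat\mu=1/\mu(\widehat X)$, and because $(\widehat X,\widehat T,\widehat\mu)$ inherits ergodicity from $(X,T,\mu)$, Birkhoff's theorem yields $\phi_n(x)/n\to 1/\mu(\widehat X)$ for $\widehat\mu$-a.e. $x$.

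To be allowed to invoke this limit with $n=\widehat\tau_{B(y,r)}(x)$, I need $\widehat\tau_{B(y,r)}(x)\to\infty$ as $r\to 0$ on a set of full $\widehat\mu$-measure. For any integer $N\ge 1$ and $r$ small enough that $B(y,r)\subset\widehat X$, the bound
$$
\widehat\mu\bigl(\{x:\widehat\tau_{B(y,r)}(x)\le N\}\bigr)\le\sum_{k=1}^{N}\widehat\mu\bigl(\widehat T^{-k}(B(y,r))\bigr)=N\,\widehat\mu(B(y,r))
$$
tends to $0$ as $r\to 0$. Combined with monotonicity of $\widehat\tau_{B(y,r)}(x)$ in $r$, restricting to a countable sequence $r_n\downarrow 0$ and applying Borel--Cantelli produces a full-measure set on which $\widehat\tau_{B(y,r_n)}(x)\to\infty$. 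Putting these pieces together, on that full-measure set
$$
\tau_{B(y,r)}(x)\cdot\mu(B(y,r))=\phi_{\widehat\tau_{B(y,r)}(x)}(x)\cdot\mu(\widehat X)\,\widehat\mu(B(y,r))=\bigl(1+o(1)\bigr)\,\widehat\tau_{B(y,r)}(x)\cdot\widehat\mu(B(y,r))
$$
as $r\to 0$, and taking $\limsup$ and $\liminf$ of both sides yields both identities simultaneously.

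The only technical obstacle I anticipate is that $r$ ranges over an uncountable continuum, whereas the Birkhoff conclusion and the Borel--Cantelli argument both naturally live on countable sequences. This is handled cleanly by exploiting monotonicity: $\tau_{B(y,r)}(x)$ and $\widehat\tau_{B(y,r)}(x)$ are non-increasing in $r$ while the measures are non-decreasing in $r$, so sandwiching an arbitrary $r$ between two consecutive values of a dense countable sequence controls the error uniformly. Everything else is bookkeeping.
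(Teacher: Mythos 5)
Your proof is correct and follows essentially the same route as the paper's: the key identity $\tau_{B(y,r)}(x)=\phi_{\widehat\tau_{B(y,r)}(x)}(x)$ coming from $B(y,r)\subset\widehat X$, the Kac/Birkhoff limit $\phi_n(x)/n\to 1/\mu(\widehat X)$, and the normalization $\widehat\mu(B(y,r))=\mu(B(y,r))/\mu(\widehat X)$ are exactly the ingredients used there (with $A_l$ playing the role of your $\phi_n$). Your Borel--Cantelli detour to ensure $\widehat\tau_{B(y,r)}(x)\to\infty$ spells out a step the paper treats as immediate; note that it, like the whole statement, tacitly relies on $\mu$ giving no mass to $y$, which is how it is used later (e.g.\ Corollary~\ref{c120181123} assumes $\mu$ is atomless).
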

\begin{proof}%[Proof of Lemma \ref{lem:insys}]
Having taken $x\in\widehat{X}$, we start by defining the sequence of subsequent \emph{closest} approaches to $y$. Inductively:
\begin{align*}
n_1&:=1, \\
n_{k+1}&:= \min\left\{n:d(T^nx,y)<d(T^{n_k}x,y)\right\}.
\end{align*}
Also, denote $r_k:= d(T^{n_k}x,y)$. Using this notation we have that
\[\tau_{B(y,r)}(x)= n_k \mbox{\quad if \quad} r_k<r \leq r_{k-1}. \]
For brevity, denote the first return map into $\widehat{X}$ by $t(x):=\tau_{\widehat{X}}(x)$, and define:
\[A_l(x):=\sum_{i=0}^{l-1}t(\widehat{T}^i(x)). \]
 By the definition of the induced map we have 
\begin{equation}\label{eq:ind1}
\widehat{T}^l(x) = T^{A_l(x)}(x),\end{equation}
and therefore, by ergodicity of $\mu$, along with Birkhoff's Ergodic Theorem and Kac's Lemma, we get that
\begin{equation}\label{eq:ind2}
\lim_{l\to \infty}\frac{1}{l}A_l(x) 
= \int_{\widehat{X}}t\,d\widehat{\mu}
=\frac{1}{\mu(\widehat{X})}
\end{equation}
for $\mu$--a.e. $x\in X$, say $x\in Z\sbt X$, where $Z$ is measurable and $\mu(Z)=1$. Since $y\in \Int \widehat{X}$, there exists $r_y>0$ so small that $B(y,r_y)\sbt\widehat{X}$. Consequently, for all $r\in(0,r_y]$, the \emph{closest} approaches to $y$ using the map $\widehat{T}$ are the same points as for the map $T$ and only the numbers of the iterates may, and usually do, differ. This observation together with \eqref{eq:ind1} gives 
\[
\widehat{\tau}_{B(y,r)}(x)=\tau_{B(y,r)}(x)\frac{l}{A_l(x)}
\]
 for all $r>0$ small enough. Because of \eqref{eq:ind2}, for every $\varepsilon>0$ and every $x\in Z$, there exists $r_y(x,\varepsilon)\in(0,r_y]$ such that 
\[
\widehat{\tau}_{B(y,r)}(x)\left(\frac{1}{\mu(\widehat{X})}- \varepsilon\right)
\le \tau_{B(y,r)}(x) 
\le \widehat{\tau}_{B(y,r)}(x)\left(\frac{1}{\mu(\widehat{X})}+\varepsilon\right)
\]
for all $r\in (0,r_y(x,\varepsilon)]$. Multiplying both sides of this inequality by $\mu(B(y,r))$ and recalling that $\widehat{\mu}(B(y,r))=\mu(B(y,r))/\mu(\widehat{X})$, gives the following
\begin{align*}
&\widehat{\tau}_{B(y,r)}(x)\widehat{\mu}(B(y,r)) \left(1-\varepsilon\mu(\widehat{X})\right) 
=\widehat{\tau}_{B(y,r)}(x)\left(\frac{1}{\mu(\widehat{X})}- \varepsilon\right) \widehat{\mu}(B(y,r))\mu(\widehat{X})\\
&\le \tau_{B(y,r)}(x)\mu(B(y,r)) \\
&\le \widehat{\tau}_{B(y,r)}(x)\left(\frac{1}{\mu(\widehat{X})}+ \varepsilon\right) \widehat{\mu}(B(y,r))\mu(\widehat{X})
= \widehat{\tau}_{B(y,r)}(x)\widehat{\mu}(B(y,r)) \left(1+\varepsilon\mu(\widehat{X})\right).
\end{align*}
Thus, letting $\varepsilon\to 0$ finishes the proof.
\end{proof}

\noindent As an immediate consequence of this lemma, we get the following. 

\begin{cor}\label{c120181123}
Consider an ergodic, metric, measure preserving, dynamical system $(X,T,\mu,d)$; in particular $\mu$ is a Borel probability measure on $X$. Assume that $\mu$ is not supported on any periodic orbit of $T$; equivalently, $\mu$ is atomless. Let $\widehat{X}\subset X$ be a measurable subset of $X$ with positive measure $\mu$. Assume that $y\in \Int \widehat{X}$. Then
\begin{equation}
\limsup_{r\to 0} \tau_{B(y,r)}(x)\cdot \mu(B(y,r))
=\limsup_{r\to 0} \widehat{\tau}_{B(y,r)}(x)\cdot \widehat{\mu}(B(y,r))
=\limsup_{r\to 0} \widehat{\tau}_{B(y,r)}(\widehat{T}(x))\cdot \widehat{\mu}(B(y,r))
\end{equation}
and 
\begin{equation}
\lim\inf_{r\to 0} \tau_{B(y,r)}(x)\cdot \mu(B(y,r))
=\liminf_{r\to 0} \widehat{\tau}_{B(y,r)}(x)\cdot \widehat{\mu}(B(y,r))
=\liminf_{r\to 0} \widehat{\tau}_{B(y,r)}(\widehat{T}(x))\cdot \widehat{\mu}(B(y,r))
\end{equation}
for $\mu$--a.e. $x\in X$, where $\widehat{T}(x)$, the first entry time to $\widehat{X}$, defined as $T^n(x)$, where $n\ge 1$ is a minimal integer such that $T^n(x)\in \widehat{X}$, is well defined for a $\mu$--a.e. $x\in X$ since the measure $\mu$ is ergodic and $\mu(\widehat{X})>0$.

In particular, if 
$$
\limsup_{r\to 0}\widehat{\tau}_{B(y,r)}(x)\cdot\widehat{\mu}(B(y,r))
=+\infty
$$
for $\widehat{\mu}$--a.e. $x\in \widehat{X}$, then
$$
\limsup_{r\to 0} \tau_{B(y,r)}(x)\cdot \mu(B(y,r))=+\infty
$$
for $\mu$--a.e. $x\in X$.
\end{cor}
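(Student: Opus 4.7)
The plan is to combine Lemma~\ref{lem:insys} with the observation that replacing the base point $x$ by its first-entry image $\widehat{T}(x)=T^{n(x)}(x)$ shifts $\tau_{B(y,r)}$ only by the fixed integer $n(x):=\tau_{\widehat{X}}(x)$, which becomes negligible after multiplication by $\mu(B(y,r))\to 0$. The atomlessness hypothesis is used both to force $\mu(B(y,r))\to 0$ and to prevent the orbit of a generic $x$ from hitting $y$ exactly.

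First, the set $N:=\bigcup_{k\ge 0}T^{-k}(\{y\})$ is $\mu$-null since $\mu(\{y\})=0$ and $\mu$ is $T$-invariant, and $n(x)<+\infty$ for $\mu$-a.e.\ $x\in X$ by ergodicity together with $\mu(\widehat{X})>0$. For such an $x\notin N$, since $\widehat{T}(x)\neq y$ and $y\in\Int\widehat{X}$, I would pick $r_0(x)>0$ so that $B(y,r_0(x))\sbt\widehat{X}$ and $\widehat{T}(x)\notin B(y,r_0(x))$. For every $r<r_0(x)$, all iterates $T^k(x)$ with $k<n(x)$ lie outside $\widehat{X}\spt B(y,r)$, and $T^{n(x)}(x)\notin B(y,r)$; hence
$$
\tau_{B(y,r)}(x)=n(x)+\tau_{B(y,r)}(\widehat{T}(x)).
$$
Multiplying by $\mu(B(y,r))$ and letting $r\to 0$, the term $n(x)\mu(B(y,r))$ vanishes, so the upper and lower limits of $\tau_{B(y,r)}(\,\cdot\,)\mu(B(y,r))$ at $x$ and at $\widehat{T}(x)$ coincide. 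Applying Lemma~\ref{lem:insys} at $\widehat{T}(x)\in\widehat{X}$ (valid for $\mu$-a.e.\ $x\in X$, since the pullback $\widehat{T}^{-1}(G^c)\sbt\bigcup_{k\ge 1}T^{-k}(G^c)$ of the complement of the lemma's full-$\widehat{\mu}$-measure good set $G\sbt\widehat{X}$ is $\mu$-null) then converts the original-system quantity at $\widehat{T}(x)$ into the induced-system quantity at $\widehat{T}(x)$, establishing $(a)=(c)$. The equality $(a)=(b)$ for $x\in\widehat{X}$ is Lemma~\ref{lem:insys} itself, and the ``in particular'' clause follows by the same pullback argument: if the induced $\limsup$ equals $+\infty$ on a full $\widehat{\mu}$-measure set $F\sbt\widehat{X}$, then $\widehat{T}(x)\in F$ for $\mu$-a.e.\ $x\in X$, and $(c)$ transports the conclusion back to the original system.

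The proof is essentially mechanical once Lemma~\ref{lem:insys} is available, and contains no real analytic difficulty. The only point where care is needed is to verify that atomlessness is exactly what makes the additive constant $n(x)\mu(B(y,r))$ vanish and isolates $\widehat{T}(x)$ from $y$ for almost every starting point; if, for example, $y$ were a periodic point of positive $\mu$-mass, the shift-base-point step would collapse.
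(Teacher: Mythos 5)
Your proof is correct and is essentially the argument the paper has in mind (the paper labels the corollary ``an immediate consequence'' of Lemma~\ref{lem:insys} and gives no further detail, but the ingredients are exactly the ones you spell out: the decomposition $\tau_{B(y,r)}(x)=n(x)+\tau_{B(y,r)}(\widehat{T}(x))$ for $r$ below a threshold depending on $x$, the vanishing of $n(x)\mu(B(y,r))$ by atomlessness, the $\mu$-nullity of $\bigcup_k T^{-k}(\{y\})$, the pullback $\{x:\widehat{T}(x)\in G^c\}\subset\bigcup_{k\ge 1}T^{-k}(G^c)$ to pass from ``$\widehat\mu$-a.e.\ on $\widehat X$'' to ``$\mu$-a.e.\ on $X$'', and then Lemma~\ref{lem:insys} applied at the base point $\widehat{T}(x)\in\widehat{X}$). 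The one cosmetic gap is that you verify only $(a)=(c)$ and note $(a)=(b)$ on $\widehat X$; to get the middle equality for $\mu$-a.e.\ $x\in X$ one either reads $\widehat\tau_{B(y,r)}(x)$ for $x\notin\widehat X$ via the first-entry map, in which case $\widehat\tau_{B(y,r)}(x)=1+\widehat\tau_{B(y,r)}(\widehat{T}(x))$ for small $r$ and $(b)=(c)$ follows by the same vanishing argument since $\widehat\mu(B(y,r))\to0$, or one simply interprets the middle term as defined only on $\widehat X$ -- either reading closes the loop trivially.
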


We now shall take fruits of this corollary and the previous section. Let $(X,d)$ be a metric space and let $T\colon X\to X$ be a continuous map. Fix $z\in X$. Assume that $z$ has a compact neighborhood, call it $W(z)$. We say that the map $T$ is of local IFS type at $z$ if there exists a closed set $\Ga_z\sbt W(z)$ such that
\begin{enumerate}
\item $z\in \Int\(\Ga_z\)$. 

\,\item For every integer $n\ge 1$ there exists $\a_n$, a countable partition of $\tau_{\Ga_z}^{-1}(n)$ ($\tau_{\Ga_z}(x):=\min\{n\in\N\cup\{+\infty\}:T^n(x)\in \Ga_z\}$ is now the first return time of $x\in\Ga_z$ to $\Ga_z$ under $T$), such that
\begin{enumerate}
\,\item For each $A\in \a_n$ the map $T^n|_A\colon A\to\Ga_z$ is a homeomorphism,

\, \item There exists $s\in(0,1)$ such that for every integer $n\ge 1$ and every $A\in\a_n$, the map $\phi_A:=\(T^n|_A\)^{-1}\colon \Ga_z\to A$ is a contraction and its Lipschitz constant does not exceed $s$. 
\end{enumerate}
\end{enumerate}
Obviously the maps 
$$
\cS:=\big\{\phi_A:n\in \N,\, A\in \tau_{\Ga_z}^{-1}(n)\big\}
$$
form an IFS. Therefore, as an immediate consequence of Theorem~\ref{mainthm_2} and Corollary~\ref{c120181123}, we get the following.

\begin{thm}\label{t120180815}
Let $(X,d)$ be a metric space and let $T\colon X\to X$ be a continuous map. Fix $z\in X$. Assume that the map $T$ is of local IFS type at $z$. Assume that $\mu$ is a Borel probability $T$\nobreakdash--invariant measure on $X$ such that
\begin{enumerate}
\item $\mu\(\Ga_z\)>0$ and 

\item $\mu_{\Ga_z}=\hat\mu_f$ for some H\"older continuous summable potential $f$ defined on the symbol space $E^{\N}$ generated by the IFS $\cS$.

\item There exist constants $\alpha>0$ and $C\ge 1$ such that
\begin{equation}\label{420180614C}
\mu(B(z,r))\le Cr^\alpha
\end{equation}
for all radii $r>0$. 
\end{enumerate}
Then for the dynamical system $\(T,\mu\)$ we have that
\begin{equation}\label{eq:rest}
\limsup_{r\to 0} \tau_{B(z,r)}(x)\cdot\mu(B(z,r)) = +\infty, \mbox{\quad for $\hat\mu_f$--a.e. $x\in\mathring{J}_\cS$.}
\end{equation}
\end{thm}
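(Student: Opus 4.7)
The plan is to build a countable iterated function system out of the local return-map data at $z$, apply Theorem~\ref{mainthm_2} to that system, and then transfer the upper-limit conclusion back to $(T,\mu)$ by means of Corollary~\ref{c120181123}. All of the real work has already been done: what remains is primarily a bookkeeping argument identifying two \emph{a priori} different induced systems.

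First I would package the collection $\cS := \{\phi_A : n\ge 1,\ A\in\alpha_n\}$ as a GDMS with a single vertex $\Ga_z$ and the incidence matrix identically equal to $1$. Since the matrix is all ones, $\cS$ is trivially finitely irreducible; the uniform bound $s\in(0,1)$ on the Lipschitz constants of the $\phi_A$ is provided by condition~(2b) of the local IFS hypothesis. The OSC holds because the sets $\tau_{\Ga_z}^{-1}(n)$ are pairwise disjoint across $n\ge 1$ and each $\alpha_n$ is a partition of $\tau_{\Ga_z}^{-1}(n)$, so the images $\phi_A(\Ga_z)=A$ have pairwise disjoint interiors. For the SOSC one uses that, by hypothesis~(3) and $z\in\Int(\Ga_z)$, the measure $\hat\mu_f=\mu_{\Ga_z}$ puts positive mass on every neighborhood of $z$ contained in $\Ga_z$; since $\hat\mu_f$ is carried on $J_\cS$, this gives $J_\cS\cap\Int(\Ga_z)\ne\es$.

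Next I would identify the two induced systems. By the very definition of local IFS type at $z$, for any point $x\in\Ga_z$ whose trajectory returns infinitely often to $\Ga_z$ the coding map $\pi$ of $\cS$ sends $x$ to a unique $\omega\in E^\N$ so that the first-return map $\widehat T$ of $T$ to $\Ga_z$ at $x$ corresponds to dropping the first symbol of $\omega$. Thus $\widehat T$ coincides (off a $\mu$-null set) with the map $T_\cS\colon \mathring J_\cS\to\mathring J_\cS$ of Corollary~\ref{o2_2016_0211}; together with hypothesis~(2), which says $\mu_{\Ga_z}=\hat\mu_f$, this identifies $(\widehat T,\mu_{\Ga_z})$ with $(T_\cS,\hat\mu_f)$. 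Because $z\in\Int(\Ga_z)$, one has $B(z,r)\sbt\Ga_z$ for all sufficiently small $r$, and hypothesis~(3) then yields
$$
\hat\mu_f(B(z,r)) = \mu(\Ga_z)^{-1}\mu(B(z,r)) \le \mu(\Ga_z)^{-1} C\,r^\alpha,
$$
which is exactly the polynomial-decay hypothesis of Theorem~\ref{mainthm_2}. Applying that theorem with $y=z$ gives
$$
\limsup_{r\to 0}\widehat\tau_{B(z,r)}(x)\cdot\hat\mu_f(B(z,r))=+\infty
\quad\text{for }\hat\mu_f\text{-a.e. }x\in\mathring J_\cS.
$$

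Finally I would invoke Corollary~\ref{c120181123} with $\widehat X:=\Ga_z$ (legitimate because $z\in\Int(\Ga_z)$ and, by hypothesis~(3) with $\alpha>0$, $\mu$ is atomless at $z$). The corollary asserts that the upper limit for the induced system $(\widehat T,\hat\mu_f)$ equals the upper limit for the ambient system $(T,\mu)$, and in fact propagates the value $+\infty$ from $\hat\mu_f$-a.e. $x\in\mathring J_\cS$ to $\mu$-a.e. $x\in X$, delivering the conclusion~\eqref{eq:rest}. The main subtle point is the verification of the SOSC and the identification of $\widehat T$ with $T_\cS$; once these two bookkeeping items are in place, Theorem~\ref{mainthm_2} and Corollary~\ref{c120181123} chain together with no further work.
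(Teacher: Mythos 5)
Your proposal follows exactly the route the paper intends: the paper's own proof of this theorem is essentially one sentence, observing that $\cS=\{\phi_A\}$ forms an IFS and then invoking Theorem~\ref{mainthm_2} together with Corollary~\ref{c120181123}. You reconstruct that chain and fill in the bookkeeping (single-vertex GDMS with full incidence matrix is trivially finitely irreducible; the $\phi_A$ have disjoint images so OSC holds; the induced map $\widehat T$ on $\Ga_z$ is, on a full-measure set, the shift $T_\cS$ of Corollary~\ref{o2_2016_0211}; hypothesis~(2) identifies $\widehat\mu$ with $\hat\mu_f$; the polynomial bound transfers to $\hat\mu_f$ after dividing by $\mu(\Ga_z)$). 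That is the same decomposition the authors have in mind, and you should be commended for making it explicit.

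One step you attempt to justify is not actually supported by the stated hypotheses. You write that the SOSC requirement $J_\cS\cap\Int(\Ga_z)\neq\es$ follows because ``by hypothesis~(3)\ldots the measure $\hat\mu_f=\mu_{\Ga_z}$ puts positive mass on every neighborhood of $z$ contained in $\Ga_z$.'' But hypothesis~(3) is the one-sided bound $\mu(B(z,r))\le Cr^\alpha$: it gives an \emph{upper} bound, hence atomlessness at $z$, but not positivity of $\mu$ on small balls around $z$. So your argument establishes neither that $\hat\mu_f$ charges $\Int(\Ga_z)$ nor that $J_\cS$ meets $\Int(\Ga_z)$. The SOSC is a genuine hypothesis of Theorem~\ref{mainthm_2} that has to be sourced elsewhere (e.g.\ from the full topological support of the Gibbs state $\hat\mu_f$ on $J_\cS$ combined with some geometric feature of the inducing domain $\Ga_z$, as indeed happens in the concrete Examples~B--D, where the inducing domain is a nice set whose interior visibly meets the limit set). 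The paper itself also glosses over this, so your instinct to address it is good; but the mechanism you gave does not work and you should either add SOSC to the standing assumptions on the local IFS structure or derive it from the way $\Ga_z$ is constructed in the applications. A second, smaller point: you say the corollary ``propagates the value $+\infty$\ldots to $\mu$-a.e.\ $x\in X$,'' which is more than \eqref{eq:rest} claims; the theorem as stated only asserts the conclusion for $\hat\mu_f$-a.e.\ $x\in\mathring{J}_\cS$, which already follows from Lemma~\ref{lem:insys} alone without needing the stronger ``in particular'' clause of Corollary~\ref{c120181123}.
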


\section{Examples}\label{Examples}
In this section we describe several classes of examples that fulfil the hypotheses of Theorem~\ref{t120180815}. Given a measurable dynamical system $T\colon X\to X$ preserving a probability measure $\mu$, a measurable set $F\subset X$ with
$\mu(F)>0$ and a function $g\colon X\to\R$, we define the function 
$$
g_F\colon X\to\R
$$
by the formula
\begin{equation}\label{6020180828}
g_F(x):=\sum_{k=0}^{\tau_F(x)-1}g\circ T^k(x).
\end{equation}

{\bf Example~A.}\, 
The first class of examples is formed by distance expanding maps $T$ considered in Section~\ref{sec:def} along with invariant measures being equilibrium/Gibbs states of H\"older continuous potentials $\psi$. Indeed, fixing an element $R$ of a Markov partition and taking as the function $f$ of Theorem~\ref{t120180815} the function $\(\psi-\P(\psi)_R\)$ given by \eqref{6020180828}, because of formula \eqref{420180614} the hypotheses of Theorem~\ref{t120180815} are satisfied for all points $z$, and this theorem applies to give us the desired limsup result. So, we have rediscovered Theorem~\ref{mainthm} from a more general, but simultaneously, a more complex perspective. 
%\end{ex}
\smallskip
%\

{\bf Example~B.}\,  
We now shall describe a large class of dynamical systems  being  multimodal smooth maps of an interval for which Theorem~\ref{t120180815} applies.  
   
We start with the definition of the class of dynamical systems and potentials we consider.

\begin{dfn}\label{dami1}
Let $I = [0,1]$ be the closed interval.  Let $T\colon I \to I$ be a $C^3$ differentiable map with the  following properties:

\, \begin{enumerate}
\item[(a)] $T$ has only finitely  many maximal closed intervals of monotonicity; or equivalently  ${\rm Crit}(T) = \{x \in I \hbox{ : } T'(x) = 0\}$, the set of all critical points of $T$, is finite.

\, \item[(b)] The dynamical system $T\colon I \to I$ is topologically exact, meaning that for every non-empty subset $U$ of $I$ there exists an integer $n \geq 0$ such that $T^n(U) = I$.

\, \item[(c)] All critical points are non-flat.

\, \item[(d)]  $T$  is a \emph{topological Collet--Eckmann map}, meaning that 
$$
\inf\big\{\left(|(T^n)'(x)| \right)^{1/n} \hbox{ : } T^n(x)=x \hbox{ for }  n \geq 1 \big\} > 1
$$
where the infimum is taken over all integers $n \geq 1$ and all fixed points of $T^n$.
\end{enumerate}
We then call $T$ a topologically exact topological Collet--Eckmann map (\emph{abbr.} teTCE).
If (c) and (d) are relaxed and only (a) and (b) are assumed then $T$ is called a topologically exact multimodal map.

We next recall the following definition.
 
\begin{dfn}\label{d2mi4} 
An interval $V \subset I$ is called a \emph{nice set} for a multimodal map $T\colon I \to I$ if
$$
 \hbox{int}(V) \cap \bu_{n=0}^\infty T^n(\partial V) = \emptyset
$$
\end{dfn}
 
The proof of the following theorem is both standard and straightforward; it can be found in many sources, see e.g. \cite{PU-Esc}.

\begin{thm}\label{t3mi4}
If $T\colon  I \to I$ is topologically exact multimodal map then for every point $\xi \in (0,1)$ and every $R > 0$ there exists a nice set $V \subset I$ 
such that $\xi \in V \subset B(\xi, R)$.  
\end{thm}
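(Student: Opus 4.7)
The plan is to take $V$ to be the closure of the component containing $\xi$ of the complement in $I$ of a suitably chosen finite, forward--invariant set $F\sbt I$. Concretely, I invoke density of periodic points of $T$ (addressed below) to choose periodic points $\alpha\in(\xi-R,\xi)$ and $\beta\in(\xi,\xi+R)$, and set
$$
F:=\{T^n(\alpha):n\ge 0\}\cup\{T^n(\beta):n\ge 0\},
$$
which is a finite set satisfying $T(F)\sbt F$. If $\xi$ is not periodic, automatically $\xi\notin F$; if $\xi$ is periodic, I additionally require that $\alpha,\beta$ not lie on the (finite) forward orbit of $\xi$, which costs nothing by the same density statement. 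Then $\xi$ lies in a unique component $(\alpha',\beta')$ of $I\sms F$, and by construction $\alpha\le\alpha'<\xi<\beta'\le\beta$, so in particular $\alpha',\beta'\in F\cap(0,1)$. I set $V:=[\alpha',\beta']$.

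Granting the existence of $\alpha,\beta$, niceness of $V$ is essentially automatic. Indeed $\partial V=\{\alpha',\beta'\}\sbt F$, so forward--invariance of $F$ gives $T^n(\partial V)\sbt F$ for every $n\ge 0$; on the other hand, $\Int(V)=(\alpha',\beta')$ is by construction a component of $I\sms F$, hence disjoint from $F$. Combining these,
$$
\Int(V)\cap\bigcup_{n=0}^{\infty}T^n(\partial V)\sbt \Int(V)\cap F=\es,
$$
which is the defining property of a nice set. The inclusions $\xi\in\Int(V)\sbt(\alpha,\beta)\sbt B(\xi,R)$ are clear.

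The only step I expect to require a separate argument is the density of periodic points of $T$, and here the multimodal structure enters only through the fact that $T$ is a continuous interval map: given any nonempty open interval $U\sbt I$, topological exactness provides an integer $N$ with $T^N(U)=I\supset U$, and since $U$ is connected the intermediate value theorem produces $x_0\in U$ with $T^N(x_0)=x_0$, i.e.\ a periodic point of $T$ inside $U$. Applying this in $(\xi-R,\xi)\cap(0,1)$ and in $(\xi,\xi+R)\cap(0,1)$, and discarding the finitely many periodic points lying on $\{T^n(\xi):n\ge 0\}$ when that set is finite, produces $\alpha$ and $\beta$ with the required properties.

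The technical heart of the proof is this density of periodic points; everything else is formal manipulation of the components of the complement of a finite forward--invariant set. I do not expect any serious obstacle elsewhere: forward--invariance of a union of periodic orbits is trivial, the size bound $V\sbt(\alpha,\beta)\sbt B(\xi,R)$ is built in, and the fact that the endpoints $\alpha',\beta'$ are bounded away from $\{0,1\}$ follows from $\alpha',\beta'\in F$ together with $\alpha\le\alpha'<\beta'\le\beta$ and $\alpha,\beta\in(0,1)$.
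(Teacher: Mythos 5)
Your proof is correct, and since the paper does not supply its own argument (it only cites \cite{PU-Esc} and calls the result ``standard and straightforward''), there is nothing to contrast it with; your route --- producing periodic points $\alpha,\beta$ on each side of $\xi$ via topological exactness and the intermediate value theorem, forming the finite forward--invariant set $F$ of their orbits, and taking $V$ to be the closure of the component of $I\setminus F$ containing $\xi$ --- is exactly the standard construction. The only microscopic slip is the claim $V\subset(\alpha,\beta)$ near the end: what you actually have is $V=[\alpha',\beta']\subset[\alpha,\beta]\subset B(\xi,R)$, since $\alpha'=\alpha$ or $\beta'=\beta$ is possible; this does not affect the conclusion.
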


We set 
$$
{\rm PC}(T) := \bu_{n=1}^\infty T^n({\rm Crit}(T))
$$ 
and call this the \emph{postcritical} set of $T$.  We say that the map $T\colon I \to I$
is \emph{tame} if
$$
\overline {{\rm PC}(T)} \neq I.
$$
\end{dfn}

\noindent
Given a set $F\sbt I$ and an integer $n\ge 0$, we denote by $\cC_F(n)$
the collection of all connected components of $T^{-n}(F)$. 
From their definitions, nice sets enjoy the following property.

\begin{thm}\label{t1mi6}
If $V$ is a nice set for a multimodal map $T\colon I \to I$, then for every integer $n \geq 0$
and every $U \in \cC_V(n)$ either 
$
U \cap V = \emptyset \  \hbox{ or } \  U \subset V.
$
\end{thm}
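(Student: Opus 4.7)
The plan is to deduce the dichotomy directly from the connectedness of $U$, after translating the niceness hypothesis into the statement that $\partial V$ is disjoint from every preimage $T^{-n}(V)$.

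First, I would recast the nice set condition in preimage form. The equality $\operatorname{int}(V)\cap\bigcup_{n\ge 0}T^n(\partial V)=\emptyset$ is plainly equivalent to $\partial V\cap T^{-n}(\operatorname{int}(V))=\emptyset$ for every $n\ge 0$. Adopting the standard convention (implicit in Theorem~\ref{t3mi4} and in the first-return constructions of Section~\ref{FRMT}) that the nice interval $V$ is open, so that $V=\operatorname{int}(V)$, the niceness hypothesis becomes the clean statement
$$
\partial V\cap T^{-n}(V)=\emptyset \quad \text{for every } n\ge 0.
$$

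Next, since $V$ is open and $T$ is continuous, the set $T^{-n}(V)$ is open in $I$, and hence each component $U\in\mathcal C_V(n)$ is an open subinterval of $I$. By the previous step, $U$ is disjoint from $\partial V$. Finally, I would conclude by a connectedness argument: the complement $I\setminus\partial V$ is the disjoint union of finitely many open (in $I$) intervals, exactly one of which is $V$ itself, while all the others are subintervals of $I\setminus\overline V$. As $U$ is connected and contained in this complement, it lies entirely in one of these intervals. If $U$ meets $V$ then necessarily $U\subset V$; otherwise $U$ is contained in a component lying in $I\setminus\overline V$, and in particular $U\cap V=\emptyset$, giving the claimed dichotomy.

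I do not foresee any serious obstacle: the proof is essentially a direct unwrapping of the definition. The only point requiring a little care is the open/closed interpretation of $V$. Working with $V$ open makes the niceness hypothesis exclude $\partial V$ from $T^{-n}(V)$ outright; in a closed interpretation one would additionally have to rule out the degenerate case where a boundary point of $V$ sits in the relative interior of a component $U$ and is mapped by $T^n$ back into $\partial V$, which would amount to a preperiodic-orbit argument irrelevant to the applications below.
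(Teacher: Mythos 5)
Your proof is correct and is essentially the standard unwrapping that the paper leaves implicit: the authors state the result with only the remark ``From their definitions, nice sets enjoy the following property,'' giving no argument, and your chain (niceness $\Leftrightarrow$ $\partial V$ avoids every $T^{-n}(V)$, then $U\subset I\setminus\partial V$ plus connectedness of $U$ forces the dichotomy) is exactly the intended one. Your caveat about the open versus closed convention is also well placed: the paper's definition is written with $\operatorname{int}(V)$ and so nominally allows a closed $V$, in which case a boundary point of $V$ lying in the relative interior of a pullback component and mapping into $\partial V$ would genuinely break the dichotomy; but both the later use in Theorem~\ref{t120180914} (where $\cS_V$ must be an IFS with SOSC on $V$) and the quoted Dobbs nice-set theorem take $V$ open, so your assumption is the correct reading.
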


From now on throughout this section we assume that $T\colon  I \to I$ is a 
tame teTCE map.  Fix a point $\xi \in I \backslash \overline {{\rm PC}(T)}$.
By virtue of Theorem \ref{t3mi4} there is a nice set $V$ such that 
$$
\xi \in V
\  \hbox{ and  }  \
2V \cap
\overline {\PC(T)} = \emptyset.
$$
The nice set $V$  canonically gives rise to a countable alphabet conformal IFS in the sense considered in the previous sections of the present paper. Namely, put 
$$
\cC_V^*:=\bu_{n=1}^\infty\cC_V(n).
$$
For every $U\in \cC_V^*$ let $\tau_V(U)\ge 1$ the unique integer $n\ge 1$
such that $U\in \cC_V(n)$. Put further
$$
\phi_U:=f_U^{-\tau_V(U)}\colon V\to U
$$
and keep in mind that
$$
\phi_U(V)=U.
$$
Denote by $E_V$ the subset of all elements $U$ of $\cC_V^*$ such that 

\begin{enumerate}
\item[(a)] $\phi_U(V)\subset V$, 
\item[(b)] $f^k(U)\cap V=\emptyset$ \, for all \, $k=1,2,\ldots,\tau_V(U)-1$.
\end{enumerate}

%\noindent Hence, 

\begin{thm}\label{t120180914}
If $V$ is a nice set for a tame teTCE map $T\colon I \to I$ with $\ov V\cap\overline {{\rm PC}(T)}=\es$, then the collection 
$$
\mathcal S_V:=\{\phi_U \colon V\to V\}
$$
of all such inverse branches obviously forms a conformal IFS satisfying the Strong Open Set Condition. In particular, the elements of $\cS_V$ are formed by all  inverse branches of the first return map $f_V\colon V\to V$. 
\end{thm}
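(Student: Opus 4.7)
The plan is to verify four items in order: (a) each $\phi_U$ is a well-defined $C^3$ contraction of $V$ into itself with a common ratio $s<1$; (b) the family $\cS_V$ satisfies the Open Set Condition; (c) it satisfies the Strong Open Set Condition; (d) $\cS_V$ exhausts the inverse branches of the first return map $f_V$.

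For (a), fix $U\in E_V$ and set $n:=\tau_V(U)$. The assumption $\ov V\cap\overline{\PC(T)}=\es$ forces $T^k(U)\cap\Crit(T)=\es$ for every $0\le k<n$: a critical point inside $T^k(U)$ would, under $T^{n-k}$, land in $V$, placing a postcritical point in $V$. Hence $T^n|_U$ is a $C^3$ local diffeomorphism; being continuous on the connected set $U$ and surjecting onto $V$, it is a diffeomorphism, and its inverse $\phi_U$ maps $V$ onto $U\subset V$ (the last inclusion is condition (a) of $E_V$). Since $2V$ also avoids $\overline{\PC(T)}$, the map $\phi_U$ extends univalently to a neighbourhood of $\ov V$. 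The teTCE hypothesis, combined with a Pliss-type argument and the Koebe distortion theorem applied on this enlarged domain, produces uniform constants $C>0$ and $\Lambda>1$ with $|(T^n)'(y)|\ge C\Lambda^n$ for every $y\in U$, and consequently
$$
\sup_{V}|\phi_U'|\le K\Lambda^{-n}
$$
for some constant $K$ independent of $U$. Shrinking the original nice set $V$ if necessary so that $K\Lambda^{-1}<1$ yields a common Lipschitz ratio $s\in(0,1)$ valid for every $\phi_U$.

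For (b), if $\Int\phi_{U_1}(V)\cap\Int\phi_{U_2}(V)\ne\es$, pick $x$ in the intersection. By condition (b) of the definition of $E_V$, every point of $U_i\subset V$ has $\tau_V(U_i)$ as its first $T$-return time to $V$, so both return times at $x$ are equal, say to $m$. Then $U_1$ and $U_2$ are both connected components of $T^{-m}(V)$ containing $x$, hence $U_1=U_2$. For (c), topological exactness gives an integer $j$ with $T^j(\Int V)=I$, hence a fixed point $p\in\Int V$ of $T^j$; its $T$-orbit returns to $V$ and so $p\in J_{\cS_V}\cap\Int V$. For (d), let $x\in V$ have first $T$-return time $m$ to $V$ and let $U\in\cC_V(m)$ be the component containing $x$. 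By Theorem~\ref{t1mi6}, $U\cap V\ne\es$ forces $U\subset V$; if $T^k(U)\cap V\ne\es$ for some $0<k<m$, connectedness of $T^k(U)$ together with a second application of Theorem~\ref{t1mi6} would give $T^k(U)\subset V$, contradicting minimality of $m$. Hence $U\in E_V$ and $f_V|_U=T^m|_U=\phi_U^{-1}$, so $\cS_V$ is precisely the inverse-branch family of $f_V$.

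The principal technical obstacle is the uniform contraction in step (a): it rests on the full strength of the teTCE expansion estimate combined with Koebe distortion on the enlarged nice neighbourhood, whose separation from $\overline{\PC(T)}$ was arranged for exactly this purpose. The remaining items (b)--(d) are essentially direct consequences of the definition of a nice set and of the combinatorial class $E_V$.
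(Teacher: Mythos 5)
The paper gives no proof of this theorem: the word ``obviously'' appears in the statement itself, and the technical underpinnings (that the first--return branches of a nice set for a TCE map form a uniformly contracting, bounded--distortion conformal IFS) are delegated to Proposition~5.4.8 of \cite{PU-Esc}, which the paper cites two paragraphs later. So there is no internal argument to measure yours against; I will evaluate the proposal on its own terms.

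Your four--step plan is sound and is the standard derivation. Three remarks. In step (a), ``the Koebe distortion theorem'' is a complex--analytic label; for a $C^3$ multimodal map with non--flat critical points the correct tool is the real Koebe principle, made available here precisely because $2V$ is separated from $\overline{\PC(T)}$, which supplies Koebe space around $\overline V$ for each inverse branch. Likewise, the uniform expansion $|(T^n)'|\ge C\Lambda^n$ on return branches is most cleanly obtained from the exponential--shrinking characterization of TCE (Przytycki--Rivera-Letelier--Smirnov) rather than a Pliss argument; either route works, but the shrinking property is what the cited literature actually uses. Also in step (a), ``shrinking $V$ so that $K\Lambda^{-1}<1$'' quietly replaces the given nice set by a smaller one, whereas the theorem is stated for a fixed $V$; this is harmless in the paper's pipeline, since $V$ is produced by Theorem~\ref{t3mi4} with arbitrarily small $R$, and some such smallness hypothesis is genuinely needed to guarantee the uniform contraction ratio $s<1$ required by the Mauldin--Urba\'nski IFS framework (you should also note that teTCE forces all periodic points to be repelling, so short return branches near a periodic point are still contractions once $V$ is small). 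Steps (b)--(d) are correct: the OSC follows from the fact that conditions (a) and (b) in the definition of $E_V$ fix a common first return time on each $U$, hence two overlapping return domains are components of the same $T^{-m}(V)$; the SOSC periodic point in $\Int V$ produced by topological exactness indeed lies in $J_{\cS_V}$ because, by niceness, no iterate of a point of $\Int V$ can touch $\partial V$ and later re-enter $\Int V$; and the identification of $E_V$ with \emph{all} first--return branches via two applications of Theorem~\ref{t1mi6} is exactly the right argument (one minor elaboration you may wish to add: $T^k(U)$ need not itself be a component of $T^{-(m-k)}(V)$, but it is a connected subset of one, and Theorem~\ref{t1mi6} applies to that component).
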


%\smallskip 
%We call a potential $\psi\colon I \to \mathbb R$ \emph{acceptable} if it is Lipschitz continuous and 
%$$
%\sup (\psi) - \inf (\psi) < {\rm h}_{\rm top}(T).
%$$
The following theorem collects together some fundamental results of 
\cite{HK-MZ}, \cite{HK-ETDS}, and \cite{LR} telling us how nice is the dynamical system generated by the map $T$ and potential $\psi$. 
 
\begin{thm}\label{t1mi3} If $T\colon I \to I$ is a topologically exact multimodal map and $\psi\colon I \to \mathbb R$ is a H\"older continuous potential, then
\begin{enumerate}
\item[(a)] there exists a Borel probability eigenmeasure 
$m_\psi$ for the dual operator $\mathcal L_{\psi}^*$ whose corresponding eigenvalue is equal to $e^{\P(\psi)}$. It then follows that ${\rm supp}(m_\psi) = I$.  
\item[(b)] there exists a unique Borel $T$--invariant probability measure
 $\mu_\psi$ on $I$ absolutely continuous with respect to $m_\psi$. Furthermore, $\mu_\psi$ is equivalent to $m_{\psi}$.
%\item[(c)] $h_{\mu_\psi}(T) + \int_I \psi d\mu_\psi = \P(\psi)$, meaning that $\mu_\psi$ is an (ergodic) equilibrium state for 
%$\psi\colon I \to \mathbb R$ with respect to the dynamical system $T\colon I \to I$.
%\item[(d)] The Perron-Frobenius $\mathcal L_\psi : {\rm BV}_I \to  {\rm BV}_I$ is quasi-compact.
%\item[(e)] $r({\mathcal L}_\psi) = e^{\P(\psi)}$.
%\item[(f)] $\hbox{\rm sp}(\mathcal L_\psi) \cap \partial B(0, e^{\P(\psi)}) = \{e^{\P(\phi)}\}$
%\item[(g)] The number $e^{\P(\psi)}$ is a simple isolated eigenvalue (this follows the previous points)  of $\mathcal L_\psi\colon  {\rm BV}_I  \to  {\rm BV}_I$ with eigenfunction $\rho_{\psi} := \frac{d\mu_\psi}{dm_\psi}$ which is Lipschitz continuous and $\log$\nobreakdash-bounded.
%\item[(h)] If the map $T$ is tame, then $\mu_\psi\(\overline {{\rm PC}(T)}\)=m_\psi\(\overline {{\rm PC}(T)}=0$.
\end{enumerate}
\end{thm}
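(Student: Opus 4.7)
The statement is a compilation theorem gathering fundamental facts about the thermodynamic formalism of interval maps, so my plan is to outline the classical strategy that leads to both parts, and to indicate where the genuine difficulty resides. The argument splits naturally into first producing the conformal eigenmeasure $m_\psi$ of the dual transfer operator, and then producing the absolutely continuous invariant measure $\mu_\psi$; the two stages are essentially independent at the level of technique.

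For part (a), I would construct $m_\psi$ as a fixed point of the normalized dual transfer operator. Put $\mathcal{L}_\psi g(x):=\sum_{T(y)=x} e^{\psi(y)} g(y)$ acting on $C(I)$, and apply the Schauder--Tychonoff fixed point theorem to the continuous map
$$
\nu\longmapsto \frac{\mathcal{L}_\psi^*\nu}{\mathcal{L}_\psi^*\nu(\1)}
$$
on the convex compact (in the weak$^*$ topology) space of Borel probability measures on $I$. This immediately yields a probability measure $m$ and a number $\lambda>0$ with $\mathcal{L}_\psi^* m=\lambda m$. The identification $\lambda=e^{\P(\psi)}$ is then obtained by comparing the masses $m(T^{-n}(B))$ along preimage trees with partition sums of the form appearing in~\eqref{2.1.1}, invoking the variational principle. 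Finally, full support of $m_\psi$ is a soft consequence of topological exactness: for every nonempty open $U\subset I$ there is an $n$ with $T^n(U)=I$, and the conformal relation propagates positive mass from $T^n(U)$ back to $U$.

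For part (b), the natural strategy is inducing. Choose a nice interval $V$ as in Theorem~\ref{t3mi4} (in the tame teTCE setting of later sections one can take $V$ disjoint from $\ov{\PC(T)}$, but even in the bare multimodal case one passes to a Hofbauer tower). On $V$ the first return map becomes uniformly expanding and admits a well-behaved transfer operator on a suitable H\"older/BV space with spectral gap. The corresponding equilibrium state $\widetilde{\mu}$ on $V$ is absolutely continuous with respect to $m_\psi|_V$, and $\mu_\psi$ is then obtained as the normalized Kac lift
$$
\mu_\psi(A)\propto \sum_{n=0}^{\infty}\widetilde\mu\bigl(V\cap\{\tau_V\ge n+1\}\cap T^{-n}A\bigr).
$$
Uniqueness and equivalence with $m_\psi$ then follow routinely from ergodicity of $\mu_\psi$ combined with ${\rm supp}(m_\psi)=I$ established in (a). The main obstacle is the verification that the Kac sum is finite and that the lift is genuinely absolutely continuous with respect to $m_\psi$ on all of $I$, not merely on $V$; this is precisely where the deep estimates of Rivera--Letelier on hyperbolic times and the fine distortion control of Hofbauer--Keller in \cite{HK-MZ,HK-ETDS,LR} are essential, and it is why the full statement is imported here as a black box rather than reproved.
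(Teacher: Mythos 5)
The paper does not prove Theorem~\ref{t1mi3} at all: it is presented explicitly as a compilation of results imported from Hofbauer--Keller \cite{HK-MZ,HK-ETDS} and Li--Rivera-Letelier \cite{LR}, with no argument given in the text. Your proposal correctly recognizes this and treats the theorem as a quoted black box, which is exactly how the paper uses it, so at that level the two agree.

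Where you go further is in sketching the classical route (Schauder--Tychonoff for the conformal eigenmeasure, topological exactness for full support, inducing plus a Kac lift for the acip), and your outline is a fair description of the standard machinery. Two points deserve care, though. First, the identification of the Schauder eigenvalue with $e^{\P(\psi)}$ is not quite ``invoking the variational principle''; in the cited sources it comes from directly comparing $\int \mathcal{L}_\psi^n \1 \, dm$ with the partition sums defining the pressure and controlling distortion along branches, and the variational principle enters only later to characterize $\mu_\psi$ as an equilibrium state. Second, and more importantly, your inducing picture via a nice interval with a uniformly expanding first-return map is genuinely available only under some hyperbolicity hypothesis (as in the tame teTCE framework of the same section); for a bare topologically exact multimodal map the relevant construction in \cite{HK-MZ,HK-ETDS} is the Hofbauer tower / Markov extension, which you do name but then do not distinguish from the nice-set inducing --- the two are not interchangeable at the level of the estimates. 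You flag the Kac-summability and absolute-continuity issues as the deep part and defer them to the references, which is the honest thing to do; just be aware that the deferral is covering the Hofbauer-tower combinatorics as well, not only distortion estimates. None of this is a gap relative to the paper, since the paper proves nothing here either.
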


\noindent Of our most direct interest is item (b) of this theorem which induces a metric dynamical system $(T\colon I\to I, \mu_\psi)$. Because of Theorem~\ref{t120180914} above and Proposition~5.4.8 in \cite{PU-Esc}, our Theorem~\ref{t120180815} applies for this dynamical system, and along with Proposition~\ref{p1sam1}, it yields the following.

\begin{thm}\label{t12018_09_13}  
If $T\colon I \to I$ is a tame teTCE map and $\psi\colon I \to \mathbb R$ is a H\"older continuous potential, then for the dynamical system $\(T\colon I\to I,\mu_\psi\)$, we have that
\begin{equation}\label{eq:restE}
\limsup_{r\to 0} \tau_{B(y,r)}(x)\cdot\mu_\psi(B(y,r)) = +\infty, 
\end{equation}
$\mu_\psi$--a.e. $y\in I$ and $\mu_\psi$--a.e. $x\in I$.
\end{thm}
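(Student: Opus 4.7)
The plan is to reduce to Theorem~\ref{t120180815} by inducing on a carefully chosen nice set. First, since $T$ is tame, the set $U := I \setminus \overline{\PC(T)}$ is open and nonempty; as is standard for teTCE systems, the equilibrium state $\mu_\psi$ assigns no mass to $\overline{\PC(T)}$, so $\mu_\psi(U) = 1$. Fix a $\mu_\psi$-typical $y \in U$. Theorem~\ref{t3mi4} supplies a nice set $V = V(y)$ with $y \in \Int V$ and $\ov V \subset U$; full support of $\mu_\psi$ (Theorem~\ref{t1mi3}) ensures $\mu_\psi(V) > 0$. This $V$ will play the role of $\Ga_z$ in Theorem~\ref{t120180815}, with $z = y$.

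Next I would invoke Theorem~\ref{t120180914} to obtain a conformal IFS $\cS_V$ satisfying the SOSC, whose elements are precisely the inverse branches of the first return map $T_V \colon V \to V$. The induced-potential construction of \cite{PU-Esc}, specifically Proposition~5.4.8 applied to the function $(\psi - \P(\psi))_V$ of~\eqref{6020180828}, yields a H\"older continuous summable potential $f \colon E_V^\N \to \R$ whose unique Gibbs/equilibrium state $\mu_f$ satisfies $\hat\mu_f = \mu_\psi|_V / \mu_\psi(V)$. Together with $\mu_\psi(V) > 0$, this verifies hypotheses (1) and (2) of Theorem~\ref{t120180815}.

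For hypothesis (3), Proposition~\ref{p1sam1} furnishes a Borel set $\Lambda_V \subset E_V^\N$ of full $\mu_f$-measure on which $\mu_\psi(B(\pi(\om), r)) \le C r^\alpha$ for all sufficiently small $r$. Restricting the choice of $y$ to $\pi(\Lambda_V)$ (a $\mu_\psi|_V$-conull set) verifies hypothesis (3). Theorem~\ref{t120180815} then yields~\eqref{eq:restE} at $y$ for $\mu_\psi$-a.e. $x \in I$. Covering $U$ by a countable family of nice sets $\{V_k\}$ of the above type and intersecting over the corresponding full-measure subsets $\pi(\Lambda_{V_k}) \subset V_k$, we obtain a $\mu_\psi$-conull set of admissible $y$'s in $I$, completing the proof.

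The main obstacle will be the second step: producing the induced potential $f$ and verifying that the conditional restriction of $\mu_\psi$ to $V$ coincides with the projection of its Gibbs state. This identification requires thermodynamic formalism for the induced system together with careful verification of H\"older regularity and summability of the induced potential; it is the substantive content borrowed from \cite{PU-Esc}. Everything else amounts to routine application of the machinery assembled in the preceding sections, together with the elementary observation that $\mu_\psi(\overline{\PC(T)})=0$.
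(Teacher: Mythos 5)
Your proposal follows essentially the same route as the paper: nice sets around a.e.\ $y$ produce the conformal IFS of Theorem~\ref{t120180914}, Proposition~5.4.8 of \cite{PU-Esc} supplies the induced H\"older summable potential with $\hat\mu_f = \mu_\psi|_V/\mu_\psi(V)$, Proposition~\ref{p1sam1} gives hypothesis (3), and Theorem~\ref{t120180815} finishes. You also make explicit two points the paper leaves implicit — that $\mu_\psi\bigl(\overline{\PC(T)}\bigr)=0$ (by forward-invariance of $\overline{\PC(T)}$, ergodicity, and full support of $\mu_\psi$) and the countable covering of $I\setminus\overline{\PC(T)}$ by nice sets to pass from a single $V$ to $\mu_\psi$-a.e.\ $y\in I$; your word ``intersecting'' in the final step should read ``taking the union'', but the argument is the intended one.
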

%\end{ex}

{\bf Example~C.}\, 
We now shall describe a large class of dynamical systems  being rational functions of the Riemann sphere $\oc$ for which Theorem~\ref{t120180815} applies. %We start with the definition of the class of dynamical systems and potentials we consider.
 Let $f\colon \oc\to\oc$ be a rational function of degree $d\ge 2$. Let $J(f)$ denote the Julia set of $f$. 
Let $\psi\colon \oc\to\R$ be a H\"older continuous potential. We say that $\psi\colon\oc\to\R$ has a \emph{pressure gap} if
\begin{equation}\label{1_2017_03_22}
n\P(\psi)-\sup\(\psi_n\)>0  
\end{equation}
for some integer $n\ge 1$, where $\P(\psi)$ denotes the ordinary topological pressure of $\psi|_{J(f)}$ and the Birkhoff's $n$th sum $\psi_n$ is also considered as restricted to $J(f)$. 

We would like to mention that \eqref{1_2017_03_22} always holds (with all $n\ge 0$ sufficiently large) if the function $f\colon\oc\to\oc$ restricted to its Julia set is expanding, which is then also frequently referred to as hyperbolic. 

The probability invariant measure we are interested in comes from the following.

\begin{thm}[\cite{DU}]\label{t1ma1}
If $f\colon\oc\to\oc$ is a rational function of degree $d\ge 2$ and if $\psi\colon\oc\to\R$ is a H\"older continuous potential with a pressure gap, then $\psi$ admits a unique equilibrium state $\mu_\psi$, i.e. a unique Borel probability $f$-invariant measure on $J(f)$ such that
$$
\P(\psi)={\rm h}_{\mu_\psi}(f)+\int_{J(f)}\psi\,d\mu_\psi.
$$
In addition, 

\,

\begin{itemize}
\item [(a)] the measure $\mu_\psi$ is ergodic, in fact K-mixing, and (see \cite{SUZ_I}) enjoys further finer stochastic properties.

\item [(b)] The Jacobian 
$$
J(f)\ni z\longmapsto \frac{d\mu_\psi\circ f}{d\mu_\psi}(z)\in (0,+\infty)
$$ 
is a H\"older continuous function.
\end{itemize}
\end{thm}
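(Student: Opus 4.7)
The plan is to construct the equilibrium state $\mu_\psi$ by combining a Patterson--Sullivan type construction of a conformal measure with transfer--operator methods, and then to extract its ergodicity and fine stochastic properties from the pressure gap hypothesis \eqref{1_2017_03_22}. The gap $n\P(\psi)-\sup(\psi_n)>0$ is precisely what is needed to dominate the contribution of orbits that linger near the critical set and to get a uniform bound on the normalized Perron--Frobenius operator.

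First, I would construct the conformal measure. Fix $x_0\in J(f)$ and, for each $s>\P(\psi)$, form
$$
m_s \;=\; \frac{1}{Z(s)}\sum_{n\ge 0}\sum_{y \in f^{-n}(x_0)}e^{-sn+\psi_n(y)}\,\delta_y,
$$
where $Z(s)$ is the obvious normalizing constant (finite for $s$ strictly above the pressure). Letting $s\searrow\P(\psi)$, any weak--$*$ accumulation point $m_\psi$ is an $e^{\P(\psi)-\psi}$--conformal Borel probability measure on $J(f)$, meaning that $m_\psi(f(A))=\int_A e^{\P(\psi)-\psi}\,dm_\psi$ whenever $f|_A$ is injective. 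Standard arguments, together with \eqref{1_2017_03_22} used to rule out escape of mass onto $\ov{\PC(f)}$, show that $m_\psi$ gives positive mass to every open set intersecting $J(f)$.

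Next, I would study the transfer operator
$$
\mathcal{L}_\psi\varphi(x) \;=\; \sum_{y\in f^{-1}(x)}e^{\psi(y)}\varphi(y).
$$
Conformality of $m_\psi$ is exactly the statement $\mathcal{L}_\psi^* m_\psi = e^{\P(\psi)}m_\psi$. Using the pressure gap together with Koebe--type distortion estimates along branches that stay away from $\Crit(f)$, one shows that the iterates $e^{-n\P(\psi)}\mathcal{L}_\psi^n \mathbf{1}$ are uniformly bounded and uniformly H\"older on compact subsets of $J(f)\setminus\ov{\PC(f)}$. A compactness/diagonal argument then extracts a continuous, strictly positive fixed point $h_\psi$ of $e^{-\P(\psi)}\mathcal{L}_\psi$. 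Setting $\mu_\psi:=h_\psi\cdot m_\psi$ gives an $f$--invariant Borel probability measure, and a standard pressure computation yields the variational equality $\P(\psi)=\h_{\mu_\psi}(f)+\int\psi\,d\mu_\psi$.

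The hardest step is uniqueness together with the stated ergodic/stochastic properties. The strategy is to upgrade the pressure gap into a genuine spectral gap: on an appropriate H\"older--type function space, decompose $e^{-\P(\psi)}\mathcal{L}_\psi = h_\psi\otimes m_\psi + \mathcal{R}$ and use \eqref{1_2017_03_22} to show that the spectral radius of the remainder $\mathcal{R}$ is strictly less than $1$. The main obstacle is controlling $\mathcal{L}_\psi$ near critical points; the hypothesis \eqref{1_2017_03_22} is calibrated precisely so that, after $n$ iterations, the exponential contribution of any branch passing through a critical neighbourhood is beaten by $e^{n\P(\psi)}$. Once the spectral gap is in hand, uniqueness of $\mu_\psi$ follows from one--dimensionality of the leading eigenspace, K-mixing from exponential decay of correlations applied to the natural extension, and H\"older regularity of the Jacobian follows immediately from the formula
$$
\frac{d(\mu_\psi\circ f)}{d\mu_\psi}(z)\;=\;e^{\P(\psi)-\psi(z)}\,\frac{h_\psi(f(z))}{h_\psi(z)},
$$
together with H\"older regularity of $\psi$ and of the eigenfunction $h_\psi$, the latter being produced as a uniform limit of functions of uniformly bounded H\"older norm.
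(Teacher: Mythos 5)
This theorem is not proved in the paper at all; it is quoted verbatim as a known result of Denker and Urba\'nski \cite{DU}, with the finer stochastic properties of item (a) referenced to \cite{SUZ_I}. So there is no in--paper argument to compare against. What can be assessed is whether your sketch would constitute a proof, and there I see a genuine gap.

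Your broad architecture does match the Denker--Urba\'nski paradigm: a Patterson--Sullivan type construction of an $e^{\P(\psi)-\psi}$--conformal measure $m_\psi$ on $J(f)$, a transfer--operator argument producing an invariant density $h_\psi$, the identification $\mu_\psi=h_\psi\,m_\psi$, and a variational computation. The problem is the final paragraph, where you propose to ``upgrade the pressure gap to a spectral gap'' on a H\"older--type space and deduce uniqueness, K--mixing, and H\"older regularity of $h_\psi$ all at once. For a non--hyperbolic rational map this step does not go through as stated: inverse branches of $f^n$ passing near $\Crit(f)$ have unbounded distortion, so there is no direct Lasota--Yorke/Doeblin--Fortet inequality for $e^{-\P(\psi)}\mathcal{L}_\psi$ on a fixed H\"older space, and the pressure gap \eqref{1_2017_03_22} alone does not produce one. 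Your claim that $e^{-n\P(\psi)}\mathcal{L}_\psi^n\mathbf{1}$ is uniformly H\"older ``on compact subsets of $J(f)\setminus\ov{\PC(f)}$'' is also insufficient, since without a tameness hypothesis $J(f)\setminus\ov{\PC(f)}$ may be empty, and even when it is not, the operator sums over preimages that approach $\Crit(f)$. The actual proofs in \cite{DU} establish exactness (hence K--mixing) and uniqueness not by a spectral gap but by a jump/induced transformation that avoids the critical set; and the H\"older regularity of the Jacobian in item (b) is not even in \cite{DU} but comes from the later inducing--based analysis of papers such as \cite{SUZ_I}. In short: correct skeleton for the existence part, but the claimed spectral--gap shortcut is precisely the step that requires the inducing machinery you have omitted.
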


Let
$$
\Crit(f):=\{c\in\oc:f'(c)=0\}
$$
be the set of all critical (branching) points of $f$. As in the case of interval maps set
$$
\PC(f):=\bu_{n=1}^\infty f^n(\Crit(f))
$$
and call it the postcritical set of $f$.

In \cite{PU_tame}, as in the previous class of examples, a rational function $f\colon\oc\to\oc$ was called \emph{tame} if
$$
J(f)\sms \ov{\PC(f)}\ne\es.
$$
Likewise, following \cite{SkU}, we adopt the same definition for (transcendental) meromorphic functions $f\colon\C\to\oc$.

\begin{rem}
Tameness is a very mild hypothesis and there are many classes of maps for which these hold.  These include:
\begin{enumerate}
\item Quadratic maps $z \mapsto z^2 + c$ for which the Julia set is not contained in the real line;
\item Rational maps for which the restriction to the Julia set is expansive which includes the case of expanding rational functions;
\item Misiurewicz maps, where the critical point is not recurrent.  
\end{enumerate}
\end{rem}

The main advantage of dealing with tame functions is that these admit nice sets. Analogously as in the case of interval maps, given a set $F\sbt\oc$ and $n\ge 0$, we denote  by $\cC_F(n)$ 
the collection of all connected components of $f^{-n}(F)$. J. Rivera--Letelier introduced in \cite{Riv07} the concept of nice sets in the realm of the dynamics of rational maps of the Riemann sphere. In \cite{Dob11} N. Dobbs proved their existence for tame meromorphic functions from $\C$ to $\oc$. We quote now his theorem.

\begin{thm}\label{prop:1}
Let $f\colon\C\to\oc$ be a tame meromorphic function. Fix a non-periodic point $z\in J(f)\sms
\ov{\PC(f)}$, $\kappa>1$, and $K>1$. Then for all $L>1$ and 
for all $r>0$ sufficiently small there exists
an open connected set $V=V(z,r)\sbt\C\sms\ov{\PC(f)}$ such that
  \begin{itemize}
  \item[(a)] If $U\in \cC_V(n)$ and $U\cap V\neq \emptyset$, then 
    $U\subseteq V$.
  \item[(b)] If $U\in \cC_V(n)$ and $U\cap v\neq \emptyset$,
    then, for all $w,w'\in U,$ 
    \begin{displaymath}
      |(f^n)'(w)|\ge L
\  \textrm{ and } \
       \frac{|(f^n)'(w)|}{|(f^n)'(w')|}\le K. 
    \end{displaymath}
  \item[(c)] $\overline{B(z,r)}\subset U\subset B(z,\kappa r)\sbt\C\sms\ov{\PC(f)}$.
\end{itemize}
\end{thm}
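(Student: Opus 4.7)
The plan is to construct $V$ as (essentially) a round disc $B(z,\rho)$ with a carefully chosen radius $\rho\in[r,\kappa r]$, exploiting the fact that $z\in J(f)\sms\ov{\PC(f)}$ gives a definite separation from the postcritical set on which the dynamics admits good Koebe--type control. Fix $r_0>0$ so small that $\ov{B(z,2\kappa r_0)}\sbt \C\sms\ov{\PC(f)}$; this is possible by tameness and because $z\notin\ov{\PC(f)}$. Throughout, all work happens on discs centered at $z$ with radii $\le\kappa r\le \kappa r_0$, so every connected component of every preimage $f^{-n}(B(z,\kappa r))$ taken inside $B(z,2\kappa r)$ is simply connected and maps univalently onto its image under the corresponding iterate $f^n$.

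The first step is to establish (b) and the second inclusion of (c) by pure distortion/shrinking considerations. Applying the Koebe distortion theorem to the univalent inverse branches $(f^n|_U)^{-1}\colon B(z,2\kappa r)\to B(z,2\kappa r)$ on the enlargement of $B(z,\kappa r)$ gives a universal distortion constant (depending only on $\kappa$) that controls the ratio $|(f^n)'(w)|/|(f^n)'(w')|$ on any pullback component $U$, giving the bound $K$ for $\kappa$ close enough to $1$. For the lower bound $|(f^n)'(w)|\ge L$, I would combine Koebe with the shrinking of pullbacks: either work in the hyperbolic metric on $\C\sms\ov{\PC(f)}$, where $f$ is a local isometry-or-contraction and hence inverse branches contract uniformly on compact sets, or invoke normality of the family of inverse branches, which prevents any pullback from having large Euclidean diameter when $r$ is small. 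Either way, for $r$ sufficiently small every pullback component of $B(z,\kappa r)$ has diameter so small that the mean value theorem together with the distortion bound forces $|(f^n)'|\ge L$ on it.

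The main obstacle is condition (a), i.e.\ making $V$ nice. Here I would use the standard Rivera--Letelier radius-selection argument, adapted by Dobbs to the transcendental setting. Consider the family of candidate sets $\{B(z,\rho):\rho\in[r,\kappa r]\}$ and call $\rho$ \emph{bad} if some forward iterate $f^k(\partial B(z,\rho))$, $k\ge 1$, meets $B(z,\rho)$ itself; equivalently, some pullback component of $B(z,\rho)$ inside $B(z,\rho)$ has its boundary crossing $\partial B(z,\rho)$. Since such crossings arise only from pullback components of the disc $B(z,\kappa r)$ that meet $B(z,r)$, and since by the distortion/shrinking of Step 1 these components are tiny ellipsoidal sets that each contribute an interval of bad radii whose length is comparable to the component's diameter, one obtains
\[
\mathrm{Leb}\bigl(\{\rho\in[r,\kappa r]:\rho\text{ is bad}\}\bigr)\le C\sum_{n\ge 1}\sum_{U\in\cC_{B(z,\kappa r)}(n),\,U\cap B(z,r)\neq\emptyset}\diam(U),
\]
and the expansion $|(f^n)'|\ge L$ forces the right-hand sum to be at most $C'L^{-1}\mathrm{vol}(B(z,\kappa r))/r$ times a combinatorial factor, which is strictly less than $(\kappa-1)r$ once $L$ is large (equivalently $r$ is small). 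Any good radius $\rho$ then gives $V=B(z,\rho)$ with (a) satisfied, while $V\supset\ov{B(z,r)}$ and $V\sbt B(z,\kappa r)\sbt\C\sms\ov{\PC(f)}$ gives (c). The technical heart is the quantitative bookkeeping of pullback diameters versus bad-radius measure; tameness enters crucially by giving the safety collar around $z$ on which Koebe applies, and the transcendental nature of $f$ is absorbed into this local picture because everything takes place in the fixed chart $B(z,2\kappa r_0)\sbt\C\sms\ov{\PC(f)}$.
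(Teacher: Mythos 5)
The paper does not prove this statement at all: it is quoted verbatim from N.~Dobbs's paper \cite{Dob11} (following J.~Rivera--Letelier's rational-map version in \cite{Riv07}), so there is no ``paper's own proof'' for your argument to track. Your handling of (b) and the outer inclusion in (c) --- Koebe distortion applied to univalent inverse branches on the safety disc $B(z,2\kappa r_0)\sbt\C\sms\ov{\PC(f)}$, together with shrinking of pullbacks to get $|(f^n)'|\ge L$ --- is the right idea, although the shrinking step does quietly need that $z$ is non-periodic and $z\in J(f)$, which you invoke only implicitly via ``normality.''

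The genuine gap is in your construction for (a). You bound the measure of bad radii by $\sum_{n\ge1}\sum_{U}\diam(U)$ over first-passage pullback components $U$ of $B(z,\kappa r)$ that meet the annulus, and then claim this sum is $\lesssim L^{-1}\cdot\mathrm{vol}(B(z,\kappa r))/r$. That bound is false, and in fact goes the wrong way. The expansion $|(f^n)'|\ge L$ gives $\diam(U)\lesssim \kappa r/L$, but small diameter means \emph{more} such components can be packed disjointly into the annulus. Concretely, if the $U$'s are pairwise disjoint, contained in the annulus of area $\asymp(\kappa^2-1)r^2$, and have diameters $\asymp \kappa r/L$, then the count is $\asymp L^2$ and $\sum_U\diam(U)\asymp rL$, which \emph{grows} with $L$ instead of going to zero. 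For a transcendental map there may genuinely be infinitely many first-passage components accumulating on the Julia set inside the annulus, so there is no a priori reason for $\sum_U\diam(U)$ to be finite, let alone smaller than $(\kappa-1)r$. Consequently one cannot conclude the existence of a good radius $\rho$, and the proposal's construction of $V$ as a round disc does not go through. This is precisely the difficulty that Rivera--Letelier and Dobbs circumvent by building $V$ not as a disc with a well-chosen radius, but as a (generally non-round) connected set obtained by adjoining to $B(z,r)$ the pullback components that would otherwise violate niceness, together with a geometric-decay estimate guaranteeing the resulting increasing union stabilizes inside $B(z,\kappa r)$; you would need to replace your measure-of-bad-radii step with an argument of that type, or supply a substantially sharper estimate than the one you have.
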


We now follow the same procedure as in the previous example, see paragraph leading to Theorem~\ref{t120180914}. Define $\cC_V^*$, $\tau_V(U)$ and finally $\phi_U$. Identically as before we arrive at

\begin{thm}\label{t220180914} With hypotheses of Theorem~\ref{prop:1}, the collection 
$$
\cS_V:=\{\phi_U\colon V\to V\}
$$
of all inverse branches forms a conformal IFS. In other words the elements of $\cS_V$ are formed by all holomorphic inverse branches of the first return map $f_V \colon V\to V$. In particular, $\tau_V(U)$ is the first return time of all points in $U=\phi_U(V)$ to $V$.
\end{thm}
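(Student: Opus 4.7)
My plan is to mirror the proof of the interval-case Theorem~\ref{t120180914}, with Dobbs' Theorem~\ref{prop:1} now furnishing the required geometric properties of the nice set $V$. First, I would define
$$
E_V := \big\{U \in \cC_V^* : \phi_U(V) \subset V \text{ and } f^k(U) \cap V = \es \text{ for } 1 \le k \le \tau_V(U)-1\big\},
$$
and set $\cS_V := \{\phi_U : U \in E_V\}$. What remains is to verify four items: that each $\phi_U$ is a well-defined conformal map $V \to U$; that the generators are uniformly contracting; that the system satisfies the SOSC; and that $\cS_V$ is exactly the collection of all holomorphic inverse branches of the first return map $f_V \colon V \to V$.

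Conformality is immediate from item (c) of Theorem~\ref{prop:1}: since $V \subset \C \sms \ov{\PC(f)}$, no critical value of any $f^n$ lies in $V$, so for $U \in \cC_V(n)$ the restriction $f^n|_U \colon U \to V$ is an unbranched cover of a connected set by a connected set, hence a biholomorphism whose inverse is exactly $\phi_U$. Uniform contraction follows from item (b) of the same theorem: by choosing the parameter $L$ large enough that $s := K/L < 1$, the lower bound $|(f^n)'| \ge L$ combined with the distortion constant $K$ yields $|\phi_U'| \le s$ on $V$ uniformly in $U \in E_V$.

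The main step is the SOSC, which reduces to showing that the images $U = \phi_U(V)$, $U \in E_V$, are pairwise disjoint. If two such $U, U'$ lie at the same level $\tau_V(U) = \tau_V(U')$, they are different components of the same $f^{-n}(V)$, hence automatically disjoint. If instead $\tau_V(U) < \tau_V(U')$, then $U'$ sits inside a unique $\widetilde U \in \cC_V(\tau_V(U))$; since $U' \cap V \ne \es$, this forces $\widetilde U \cap V \ne \es$, and property (a) of Theorem~\ref{prop:1} then gives $\widetilde U \subset V$, after which the first-return minimality encoded in the definition of $E_V$ forces $\widetilde U = V$, contradicting $\tau_V(U) \ge 1$. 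The SOSC itself follows because $V$ is open and $z \in V \cap J(f) \subset \Int(V) \cap J_{\cS_V}$. The identification of $\cS_V$ with the inverse branches of $f_V$ is then tautological from the definition of $E_V$, so the main subtlety I anticipate is precisely the nice-set application just given --- everything else is a direct transcription of the interval argument.
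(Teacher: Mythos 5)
The paper itself supplies no proof of Theorem~\ref{t220180914}: the text immediately preceding it says only ``We now follow the same procedure as in the previous example, see paragraph leading to Theorem~\ref{t120180914}\dots Identically as before we arrive at,'' and the interval--map counterpart Theorem~\ref{t120180914} is in turn stated with the single word ``obviously'' standing in for a proof. So your write-up supplies detail the paper elides, and the four items you check (conformality, uniform contraction, SOSC, identification with the inverse branches of the first return map) are exactly the right ones. Conformality and contraction do follow from items (c) and (b) of Theorem~\ref{prop:1} as you say, although for the Lipschitz constant the lower bound $|(f^n)'|\ge L$ on $U$ already gives $|\phi_U'|\le 1/L$ on $V$, so the distortion constant $K$ plays no role in that step and the bound $s:=K/L$ is unnecessarily weak.

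The one place your argument is genuinely muddled is the disjointness step. It is not clear why, when $\tau_V(U)<\tau_V(U')$, the component $U'$ must sit inside a single $\widetilde U\in\cC_V(\tau_V(U))$ (this would require $f^{\tau_V(U)}(U')\sbt V$, which has not been established — a priori $f^{\tau_V(U)}(U')$ could leave $V$), and the subsequent clause ``forces $\widetilde U=V$, contradicting $\tau_V(U)\ge 1$'' does not parse: $\widetilde U$ would be a level-$\tau_V(U)$ component, not $V$ itself. The clean version is direct and uses only the definition of $E_V$: suppose $x\in U\cap U'$ with $\tau_V(U)<\tau_V(U')$. Since $x\in U$ we have $f^{\tau_V(U)}(x)\in V$; but since $x\in U'\in E_V$, the first-return condition gives $f^k(x)\notin V$ for every $1\le k\le \tau_V(U')-1$, and in particular for $k=\tau_V(U)$. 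This is a contradiction, so $\tau_V(U)=\tau_V(U')$, and then $U,U'$ are connected components of the same preimage $f^{-n}(V)$ both containing $x$, hence equal. With that repair (and the minor observation that $J_{\cS_V}$ is non-empty and contained in the open set $V$, so SOSC holds), your proof is correct and does fill the gap the paper left implicit.
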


As in the case of interval maps, we call a rational function $f\colon\oc\to\oc$ topological Collet--Eckmann (\emph{abbr.} TCE) if
\[
\inf\big\{\left(|(f^n)'(x)| \right)^{1/n}: f^n(x)=x \mbox{ for all }  n \geq 1 \big\} > 1
\]
where the infimum is taken over all integers $n \geq 1$ and all fixed points of $T^n$. There are several other useful characterizations of TCE rational functions, most notably the one commonly referred to as the exponential shrinking property, but we do not really need them in this paper. We can now easily prove the following.

\begin{thm}\label{p1ma3}
If $f\colon\oc\to\oc$ is a tame TCE rational function and $\psi\colon\oc\to\R$ is a H\"older continuous potential then, for the dynamical system $\(f\colon J(f)\to J(f),\mu_\psi\)$, we have that
\begin{equation}\label{eq:restF}
\limsup_{r\to 0} \tau_{B(y,r)}(x)\cdot\mu_\psi(B(y,r)) = +\infty
\end{equation}
for $\mu_\psi$--a.e. $y\in J(f)$ and $\mu_\psi$--a.e. $x\in J(f)$.
\end{thm}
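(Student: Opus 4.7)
The plan is to reduce Theorem~\ref{p1ma3} to Theorem~\ref{t120180815} (equivalently, to Theorem~\ref{mainthm_20181121} combined with Corollary~\ref{c120181123}), by realizing $f$ as a map of local IFS type at a suitably chosen point, with the induced measure identifiable as the projection of a Gibbs state for a H\"older continuous summable potential. This mirrors the strategy used for the multimodal interval case in Theorem~\ref{t12018_09_13}.

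First I would fix a point $z\in J(f)\setminus\overline{{\rm PC}(f)}$; such points exist abundantly because $f$ is tame, and in fact by ergodicity and full topological support of $\mu_\psi$ (Theorem~\ref{t1ma1}) we have $\mu_\psi\(J(f)\setminus\overline{{\rm PC}(f)}\)>0$. Applying the Dobbs--Rivera-Letelier nice-set Theorem~\ref{prop:1}, for every sufficiently small $r>0$ I obtain a nice connected open set $V=V(z,r)\subset\C\setminus\overline{{\rm PC}(f)}$ with $\overline{B(z,r)}\subset V\subset B(z,\kappa r)$ and with bounded distortion for all holomorphic pullbacks of $V$. Theorem~\ref{t220180914} then gives me the conformal IFS $\cS_V=\{\phi_U\colon V\to V\}$ whose elements are precisely the inverse branches of the first return map $f_V\colon V\to V$. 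This verifies condition (1) (and the structural part of condition (2)) of Theorem~\ref{t120180815}, with $\Ga_z:=\ov V$.

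The main obstacle, and the step that requires invoking existing thermodynamic machinery for TCE rational maps, is to verify that the first-return invariant measure $\mu_{\psi,V}$ (the normalization of $\mu_\psi|_V$, which is $f_V$-invariant by Kac's lemma) coincides with $\hat\mu_{\tilde\psi}$ for some H\"older continuous summable potential $\tilde\psi$ on the symbol space $E_V^{\N}$ of $\cS_V$. The natural candidate, as in Example~A via formula \eqref{6020180828}, is
\[
\tilde\psi:=(\psi-\P(\psi))_V\circ\pi,
\]
i.e.\ the lift by the coding map of the induced potential whose Birkhoff sums along a return orbit equal $\sum_{k=0}^{\tau_V-1}(\psi-\P(\psi))\circ f^k$. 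H\"older continuity of $\tilde\psi$ follows from the bounded distortion property in Theorem~\ref{prop:1}(b) together with H\"older continuity of $\psi$, while summability follows from the pressure gap/exponential shrinking property characterizing TCE maps (which forces the tails $\sum_{n\ge 1}\sum_{U\in\cC_V(n)\cap E_V}\exp(\sup\tilde\psi|_{[U]})$ to be finite). Uniqueness of the equilibrium state in Theorem~\ref{t1ma1} then forces $\mu_{\psi,V}=\hat\mu_{\tilde\psi}$.

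Having (1) and (2), condition (3) of Theorem~\ref{t120180815} is exactly the polynomial upper bound $\mu_\psi(B(z,r))\le Cr^\alpha$; by Proposition~\ref{p1sam1} applied to the IFS $\cS_V$ and the Gibbs state $\mu_{\tilde\psi}$ (together with the fact that the projection $\pi_{\cS_V}$ is bi-Lipschitz up to bounded distortion because $V$ lies away from $\overline{{\rm PC}(f)}$), this bound holds for $\mu_\psi$--a.e.\ $z$. Theorem~\ref{t120180815} then yields
\[
\limsup_{r\to 0}\tau_{B(z,r)}(x)\cdot\mu_\psi(B(z,r))=+\infty
\]
for $\mu_\psi$--a.e.\ $x\in J(f)$. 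Finally, since $\mu_\psi$ is atomless (being ergodic, K-mixing, of positive entropy by the pressure gap), Corollary~\ref{c120181123} (or directly Lemma~\ref{lem:insys}) removes the dependence of the conclusion on the particular choice of $z\in \mathring{J}_{\cS_V}$ and, varying the nice set $V$ in a countable family of radii covering $\mu_\psi$--a.e.\ $y\in J(f)\setminus\overline{{\rm PC}(f)}$, upgrades the statement to ``for $\mu_\psi$--a.e.\ $y$ and $\mu_\psi$--a.e.\ $x$,'' completing the proof.
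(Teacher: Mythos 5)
Your proposal follows essentially the same route as the paper: pick $z\in J(f)\setminus\overline{\PC(f)}$ (which exists by tameness), build a nice set $V$ via Theorem~\ref{prop:1}, invoke Theorem~\ref{t220180914} to get the conformal IFS $\cS_V$ of first-return inverse branches, verify the hypotheses of Theorem~\ref{t120180815}, and finish with Proposition~\ref{p1sam1} and Corollary~\ref{c120181123}. The one thing the paper makes explicit that you leave implicit is the very first step: before Theorem~\ref{t1ma1} can be applied at all (it is what provides $\mu_\psi$ together with its ergodicity and the H\"older Jacobian you tacitly use), one must check that $\psi$ has a pressure gap, and this is exactly where the TCE hypothesis enters — via Corollary~1.2 of \cite{IR-R}, which guarantees every H\"older potential on the Julia set of a TCE rational map has a pressure gap; you mention the pressure gap only in connection with summability of the induced potential, which is downstream of this. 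Also, for the claim that the induced potential is H\"older and summable and that the induced measure is the corresponding Gibbs state, the paper simply cites Proposition~5.4.8 of \cite{PU-Esc} rather than sketching the bounded-distortion argument as you do; your sketch is plausible but the citation is the clean way to close that step.
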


\begin{proof}
Since $f$ is a TCE rational function and $\psi\colon\oc\to\R$ is a H\"older continuous potential, this potential has a pressure gap because of Corollary~1.2
in \cite{IR-R}. So, because of Theorem~\ref{t220180914} and Theorem~\ref{t1ma1} above, and also because of Proposition~5.4.8 in \cite{PU-Esc}, our Theorem~\ref{t120180815} applies for the dynamical system $\(f\colon J(f)\to J(f),\mu_\psi\)$. Along with Proposition~\ref{p1sam1}, this completes the proof.
\end{proof}

{\bf Example~D.}\, 
 Let $f\colon\C\to\oc$ be a meromorphic function. Let $\Sing(f^{-1})$ be the set of all singular points of $f^{-1}$, i. e. the set of all points $w\in\oc$ such that if $W$ is any open connected neighborhood of $w$, then there exists a connected component $U$ of $f^{-1}(W)$ such that the map $f\colon U\to W$ is not bijective. Of course, if $f$ is a rational function, then $\Sing(f^{-1})=f(\Crit(f))$. As in the case of rational functions, we define
\[
\PS(f):=\bu_{n=0}^\infty f^n(\Sing(f^{-1})).
\]
The function $f$ is called \emph{topologically hyperbolic} if
$$
\dist_{\text{Euclid}}(J_f ,\PS(f)) >0,
$$
and it is called \emph{expanding} if there exist $c>0$ and $ \lam>1$
such that
$$
|(f^n)'(z)|\ge c\lam^n 
$$
for all integers $n\ge 1$ and all points $z\in J_f\sms
f^{-n}(\infty)$. Note that every topologically hyperbolic meromorphic
function is tame.  A meromorphic function that is both topologically
hyperbolic and expanding 
is called \emph{hyperbolic}. The meromorphic function $f\colon\C\to\oc$ is
called dynamically {\it semi-regular} if it is of finite order, commonly
denoted by $\rho_f$, and satisfies the following rapid
growth condition for its derivative.  
\begin{equation} \label{eq intro}
|f'(z)|\geq \kappa ^{-1} (1+|z|)^{\al _1} (1+|f(z)|)^{\al_2} \; ,
\quad z\in J_f, 
\end{equation}
with some constant $\kappa >0$ and $\a_1,\a_2$ such that $\al_2 > \max\{-\al _1 ,0\}$.  Set $\a:=\a_1+\a_2$.

\begin{rem}
A particularly simple example of such maps are entire functions $f_\lambda(z) = \lambda e^{z}$
where $\lambda \in (0, 1/e)$ since these maps have an attracting periodic point.  A good reference is \cite{MayUrbETDS}.
\end{rem}

Let $h\colon J_f\to\R$ be a weakly H\"older continuous function 
in the sense of \cite{MayUrb10}. The definition, introduced therein, 
is somewhat technical and we will not provide it here; the simplest example of a weakly H\"older continuous function is the function identically equal to zero. The corresponding function $\psi_{t,0}$ is by no means trivial. Furthermore, each bounded, uniformly locally H\"older function $h\colon J_f\to\R$ is weakly H\"older. Fix $\tau>\a_2$ as required in \cite{MayUrb10}. For $t\in\R$, let
\begin{equation}
  \label{eq:7}
  \psi_{t,h}:=-t\log|f'|_\tau+h
\end{equation}
where $|f'(z)|_\tau$ is the norm, or -- equivalently -- the scaling
factor, of the derivative of $f$ evaluated at a point $z\in J_f$ with
respect to the Riemannian metric
$$
|d\tau(z)|=(1+|z|)^{-\tau}|dz|.
$$
Following \cite{MayUrb10} functions (potentials)
of the form (\ref{eq:7}) are
called \emph{loosely tame}. Let $\cL_{t,h}\colon C_b(J_f)\to C_b(J_f)$ be the
corresponding \emph{Perron--Frobenius operator} given by the formula
$$
\cL_{t,h}g(z):=\sum_{w\in f^{-1}(z)}g(w)e^{\psi_{t,h}(w)}.
$$
It was shown in \cite{MayUrb10} that, for every $z\in J_f$ and for the
function $\1\colon z\mapsto 1$, the limit
$$
\lim_{n\to\infty}\frac1n\log\cL_{t,h}\1(z)
$$
exists and takes on the same common value, which we denote by $\P(t)$
and call \emph{the topological pressure} of the potential
$\psi_t$. The following theorem was proved in \cite{MayUrb10}.

\begin{thm}\label{t1dns111}
If $f\colon\C\to\oc$ is a dynamically semi-regular meromorphic function and
$h\colon J_f\to\R$ is a weakly H\"older continuous potential, then for every
$t>\rho_f/\a$ there exist uniquely determined Borel probability measures $m_{t,h}$ and $\mu_{t,h}$ on $J_f$ with the following properties.

%\,

\begin{itemize}
\item[{\rm(a)}] \ $\cL_{t,h}^*m_{t,h}=m_{t,h}$.

\,

\item[{\rm(b)}] \ $\P\(\psi_{t,h}\)=\sup\big\{{\rm h}_\mu(f)+\int\psi_{t,h}\, d\mu:\mu\circ f^{-1}=\mu \  \
  \text{{\rm and }}  \ \int\psi_{t,h}\, d\mu>-\infty\big\}$.

\,

\item[{\rm(c)}] \ $\mu_{t,h}\circ f^{-1}
    =\mu_{t,h}$, $\int\psi_{t,h}\, d\mu_{t,h}>-\infty$, \  and  \
$
{\rm h}_{\mu_{t,h}}(f)+\int\psi_{t,h}\,d\mu_{t,h}=\P\(\psi_{t,h}\).
$

\,

\item[{\rm(d)}] \ The measures $\mu_{t,h}$ and $m_{t,h}$ are equivalent and the Radon--Nikodym derivative $\frac{d\mu_{t,h}}{dm_{t,h}}$ has a nowhere-vanishing H\"older continuous version which is bounded above.
\end{itemize}
\end{thm}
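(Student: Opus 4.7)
The plan is to follow the standard Perron--Frobenius / transfer operator approach adapted to the non-compact, infinite-to-one setting, where the growth condition \eqref{eq intro} together with the hypothesis $t>\rho_f/\alpha$ plays the role of a summability assumption analogous to the one appearing in countable alphabet symbolic dynamics. First I would set up an appropriate function space on $J_f$, namely a space of bounded weakly H\"older continuous observables with a norm combining sup--norm and a weighted H\"older seminorm defined relative to the Riemannian metric $|d\tau|$. The first task is to verify that $\cL_{t,h}$ preserves this space and is a bounded linear operator; the pointwise finiteness and boundedness of $\cL_{t,h}\1$ is where the rapid growth hypothesis \eqref{eq intro} with $t>\rho_f/\alpha$ enters, forcing the series $\sum_{w\in f^{-1}(z)}|f'(w)|_\tau^{-t}e^{h(w)}$ to converge uniformly on $J_f$.

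Next I would construct $m_{t,h}$ as a conformal measure, that is a fixed point of a normalized dual operator $\lambda^{-1}\cL_{t,h}^*$ acting on the space of Borel probability measures on $J_f$ (topologized by weak-* convergence on compacts). The standard trick is to define $\lambda$ a priori as $\exp(\P(\psi_{t,h}))$ via the limit $\lim_n \frac{1}{n}\log \cL_{t,h}^n\1(z)$ (whose existence is already granted in the excerpt), and then to produce the fixed point by a Schauder--Tychonoff argument on a suitable convex compact set; tightness comes from the growth estimate \eqref{eq intro}, which forces mass to remain in bounded regions of $J_f$ under the normalized dynamics. This yields (a), with eigenvalue $e^{\P(\psi_{t,h})}$.

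Having $m_{t,h}$, I would construct a continuous eigenfunction $\rho$ for $\cL_{t,h}$ (same eigenvalue) via Cesaro averages $\frac{1}{n}\sum_{k=0}^{n-1}\lambda^{-k}\cL_{t,h}^k\1$, extracting a locally uniform limit point and showing it is strictly positive and bounded above; uniform weak H\"older estimates on iterates $\lambda^{-k}\cL_{t,h}^k\1$ (a Bowen--type distortion bound, which in the meromorphic setting requires controlling the distortion of inverse branches using \eqref{eq intro}) are the mechanism that yields compactness of this sequence and H\"older regularity of the limit. Then $d\mu_{t,h}:=\rho\, dm_{t,h}$ is $f$-invariant by a direct duality computation $\int g\circ f\cdot\rho\, dm_{t,h}=\lambda^{-1}\int \cL_{t,h}(g\circ f\cdot\rho)\, dm_{t,h}=\int g\cdot \lambda^{-1}\cL_{t,h}\rho\, dm_{t,h}=\int g\rho\, dm_{t,h}$, giving the first half of (c) and (d). The variational formula in (b) and the equality case in (c) follow from the standard two-sided argument: the upper bound via a refining-partition entropy estimate applied to $(f,\mu)$ for any $f$-invariant $\mu$ with $\int\psi_{t,h}\, d\mu>-\infty$ together with the Rokhlin formula for the Jacobian of $m_{t,h}$; the lower bound from computing $\h_{\mu_{t,h}}(f)+\int\psi_{t,h}\, d\mu_{t,h}$ directly using the explicit Jacobian $\lambda e^{-\psi_{t,h}}\rho\circ f/\rho$ and the Rokhlin identity.

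The main obstacle is not the algebraic scheme, which is classical, but the non-compactness of $J_f\subset\C$ combined with the infinite-degree of $f$: the standard compact-space Perron--Frobenius arguments (Schauder fixed point on $C(X)$, uniform positivity of $\cL^n\1$, etc.) simply do not apply. The role of the condition $t>\rho_f/\alpha$ and of the order $\rho_f$ of $f$ is to make the Perron--Frobenius series tight, to force the iterates $\lambda^{-n}\cL_{t,h}^n\1$ to concentrate on compacts uniformly in $n$, and to make the eigenfunction $\rho$ bounded despite $\psi_{t,h}\to-\infty$ at poles. All the delicate work (uniform distortion for inverse branches, weak H\"older control for $\rho$, finiteness of $\int \psi_{t,h}\, d\mu_{t,h}$, nowhere-vanishing of $\rho$) is concentrated in turning these tightness estimates into genuine compactness arguments in a suitable seminormed space, which is precisely the content that the weakly H\"older formalism of \cite{MayUrb10} was designed to handle.
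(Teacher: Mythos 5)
The paper does not prove Theorem~\ref{t1dns111} at all: it is stated verbatim and attributed to \cite{MayUrb10}, so there is no in-paper argument to compare against. Your outline is a reasonable reconstruction of the strategy used in that Memoir — a transfer-operator space adapted to the weighted metric $|d\tau|$, convergence of the Perron--Frobenius series from the rapid-growth condition \eqref{eq intro} under $t>\rho_f/\a$, a tightness-plus-fixed-point construction of the conformal measure $m_{t,h}$, Ces\`aro averages of $\lambda^{-n}\cL_{t,h}^n\1$ for the bounded positive eigenfunction, duality for $f$-invariance of $\rho\,dm_{t,h}$, and Rokhlin's formula for the two-sided variational estimate — so it is in the spirit of the cited proof rather than a new route. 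One genuine omission: the theorem claims \emph{uniqueness} of both $m_{t,h}$ and $\mu_{t,h}$, and your sketch addresses only existence; in the Mayer--Urba\'nski framework uniqueness rests on a separate argument (conservativity and ergodicity of the conformal measure combined with a cone/spectral-gap estimate forcing the eigenfunction to be unique up to scaling), and this step is not a formality in the non-compact, infinite-degree setting. You should also be explicit that the bounded weakly H\"older distortion estimates for inverse branches of $f^n$ are what make the Ces\`aro sequence precompact and the limit density H\"older and nowhere-vanishing; without these bounds the compactness argument you invoke does not close.
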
  

\noindent Theorem~\ref{prop:1} of course holds and so do the analogs of Theorem~\ref{t220180914} and Proposition~5.5.7 in \cite{PU-Esc}. Thus our Theorem~\ref{t120180815} applies for the dynamical system $\(f\colon J(f)\to J(f),\mu_{t,h})$, and along with Proposition~\ref{p1sam1}, it yields the following.

\begin{thm}\label{p1ma3H}
Let $f\colon\C\to\oc$ be a dynamically semi-regular meromorphic function and let $h\colon J_f\to\R$ be a weakly H\"older continuous potential. If $t>\rho_f/\a$, then for the dynamical system $\(f\colon J(f)\to J(f),\mu_{t,h}\)$, we have that
\begin{equation}\label{eq:restL}
\limsup_{r\to 0} \tau_{B(y,r)}(x)\cdot\mu_{t,h}(B(y,r)) = +\infty, 
\end{equation}
for $\mu_{t,h}$--a.e. $y\in J(f)$ and $\mu_{t,h}$--a.e. $x\in J(f)$.
\end{thm}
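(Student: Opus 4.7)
The plan is to verify the three hypotheses of Theorem~\ref{t120180815} for the dynamical system $(f\colon J(f)\to J(f),\mu_{t,h})$ and then read the conclusion off, exactly paralleling the argument of Theorem~\ref{p1ma3} in the rational case. First I would fix a reference point $z\in J_f\sms\ov{\PS(f)}$; tameness of $f$ (which is assumed, being automatic in the hyperbolic and topologically hyperbolic cases typical of this setup) ensures that such points exist in abundance. Dobbs' Theorem~\ref{prop:1} then produces a nice open neighbourhood $V\ni z$ with $\ov V\cap\ov{\PS(f)}=\es$.

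Next, the collection of all univalent holomorphic inverse branches of iterates $f^n$ realising first returns to $V$ forms a countable-alphabet conformal IFS $\cS_V=\{\phi_U\colon V\to V\}$, with uniform contraction coming from Koebe distortion and from the uniform expansion of $f$ off $\ov{\PS(f)}$ on $J_f$; this is the transcendental analog of Theorem~\ref{t220180914}. In particular $f$ is of local IFS type at $z$, with $\Ga_z:=V$ in the sense of the paragraph preceding Theorem~\ref{t120180815}, which addresses hypothesis~(i) of that theorem.

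For hypothesis~(ii) I would invoke the transcendental counterpart of Proposition 5.5.7 in \cite{PU-Esc}: the induced potential $(\psi_{t,h}-\P(\psi_{t,h}))_V$ defined via \eqref{6020180828} descends through the symbolic coding $\pi\colon E^\N\to V$ to a H\"older continuous \emph{summable} potential $F$ on $E^\N$ whose projected Gibbs state $\hat\mu_F$ coincides, up to normalisation, with $\mu_{t,h}|_V$. The summability reduces to controlling
$$
\sum_{U\in E_V}\sup_V|\phi_U'|^t \, e^{\sup_U h_{\tau_V(U)}},
$$
which, by the Mayer--Urba\'nski thermodynamic formalism \cite{MayUrb10}, converges precisely because $t>\rho_f/\a$ together with the rapid-growth condition~\eqref{eq intro} on $|f'|_\tau$. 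I expect this step to be the main obstacle: it is the only place where the non-compactness of $J_f$ and the behaviour of $|f'|_\tau$ near $\infty$ enter non-trivially, and the argument, while standard after \cite{MayUrb10}, requires the full semi-regularity hypothesis to close. Hypothesis~(iii), the polynomial upper bound $\mu_{t,h}(B(y,r))\le Cr^\a$, then follows at $\mu_{t,h}$-a.e. $y\in V$ from Proposition~\ref{p1sam1} applied to $\cS_V$ and $F$.

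Having checked (i)--(iii), Theorem~\ref{t120180815} delivers \eqref{eq:restL} for $\mu_{t,h}$-a.e. $x$ and for the fixed centre $z\in V$. To spread the statement to a set of centres of full $\mu_{t,h}$-measure, I would let $z$ range over a countable dense subset of $J_f\sms\ov{\PS(f)}$ (a set of full $\mu_{t,h}$-measure by the ergodicity and full support of $\mu_{t,h}$ from Theorem~\ref{t1dns111}), using for each a nice neighbourhood $V$ as above, and invoke Corollary~\ref{c120181123} together with the ergodic identification $\mu_{t,h}\bigl(\bu_n f^{-n}(V)\bigr)=1$ to transfer the limsup identity from the induced systems $(\widehat f,\widehat\mu_{t,h})$ back to $(f,\mu_{t,h})$. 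The combined statement holds at $\mu_{t,h}$-a.e. $y\in J(f)$ and $\mu_{t,h}$-a.e. $x\in J(f)$, which is exactly \eqref{eq:restL}.
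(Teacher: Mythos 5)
Your proposal is correct and follows essentially the same route as the paper: Dobbs' nice sets (Theorem~\ref{prop:1}) yield a conformal IFS $\cS_V$ of first-return inverse branches, the transcendental analog of Proposition~5.5.7 in \cite{PU-Esc} identifies the induced system with a Gibbs state for a H\"older summable potential (summability being the point at which $t>\rho_f/\a$ and the rapid-growth condition \eqref{eq intro} enter), and then Theorem~\ref{t120180815} together with Proposition~\ref{p1sam1} and Corollary~\ref{c120181123} delivers \eqref{eq:restL}. Your last paragraph makes explicit the standard countable-cover step for spreading the conclusion to $\mu_{t,h}$-a.e.~$y$, which the paper leaves implicit; otherwise the arguments coincide.
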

%\
%\
%\
%\bibliography{bib}

\end{document}